\DeclareMathOperator*{\esssup}{ess\,sup}
\newcommand{\R}{\mathbb{R}}
\newcommand{\N}{\mathbb{N}}
\newcommand{\E}{\mathbb{E}}
\newcommand{\D}{\,\mathrm{d}}
\newcommand{\cA}{\mathcal{A}}
\newcommand{\cN}{\mathcal{N}}
\newcommand{\cB}{\mathcal{B}}
\newcommand{\cb}{\mathfrak{b}}
\newcommand{\cS}{\mathcal{S}}
\newcommand{\cJ}{\mathcal{J}}
\newcommand{\cG}{\mathcal{G}}
\numberwithin{equation}{section}
\newtheorem{theorem}{Theorem}[section]
\newtheorem{lemma}[theorem]{Lemma}
\newtheorem{proposition}[theorem]{Proposition}
\theoremstyle{remark}
\newtheorem{remark}[theorem]{Remark}
\theoremstyle{definition}
\newtheorem{example}[theorem]{Example}
\newcounter{assumption}
\newtheorem{assumption}[theorem]{Assumption}
\newcounter{subassumption}[assumption]
\renewcommand{\thesubassumption}{(\textit{\roman{subassumption}})}
\renewcommand{\p@subassumption}{\theassumption}% Counter prefix.
\newcommand{\subasu}{% Just like \item in a list, but for an asu
  \refstepcounter{subassumption}%
  \thesubassumption~\ignorespaces}
\begin{document}

\begin{frontmatter}

%% Title, authors and addresses

%% use the tnoteref command within \title for footnotes;
%% use the tnotetext command for theassociated footnote;
%% use the fnref command within \author or \address for footnotes;
%% use the fntext command for theassociated footnote;
%% use the corref command within \author for corresponding author footnotes;
%% use the cortext command for theassociated footnote;
%% use the ead command for the email address,
%% and the form \ead[url] for the home page:
%% \title{Title\tnoteref{label1}}
%% \tnotetext[label1]{}
%% \author{Name\corref{cor1}\fnref{label2}}
%% \ead{email address}
%% \ead[url]{home page}
%% \fntext[label2]{}
%% \cortext[cor1]{}
%% \affiliation{organization={},
%%             addressline={},
%%             city={},
%%             postcode={},
%%             state={},
%%             country={}}
%% \fntext[label3]{}

\title{A stochastic gradient method for a class of nonlinear PDE-constrained optimal control problems under uncertainty}

%% use optional labels to link authors explicitly to addresses:
%% \author[label1,label2]{}
%% \affiliation[label1]{organization={},
%%             addressline={},
%%             city={},
%%             postcode={},
%%             state={},
%%             country={}}
%%
%% \affiliation[label2]{organization={},
%%             addressline={},
%%             city={},
%%             postcode={},
%%             state={},
%%             country={}}

\author{Caroline Geiersbach\footnote{Weierstrass Institute, Berlin (Germany).  caroline.geiersbach@wias-berlin.de }}

%\affiliation{organization={Weierstrass Institute, Berlin},%Department and Organization
            %addressline={}, 
            %city={l'Aquila},
            %postcode={67100}, 
            %state={},
            %country={}
  %          }
\author{Teresa Scarinci\footnote{Dipartimento di Ingegneria Elettrica e dell'Informazione ``Maurizio Scarano", Università degli studi di Cassino e del Lazio Meridionale (Italy). teresa.scarinci@unicas.it}
%\footnote{Department of Information Engineering, Computer Science and Mathematics,
%University of L'Aquila (Italy). teresa.scarinci@univaq.it}
}

%\affiliation{organization={Department of Information Engineering, Computer Science and Mathematics,
%University of L'Aquila},%Department and Organization
            %addressline={}, 
            %city={},
            %postcode={}, 
            %state={},
            %country={}
 %           }

%\CG{Some possible extensions:
%\begin{enumerate}
%    \item Extension to the Banach space setting 
%    \item Role of Tichenov term in application, strong convergence result ($\rightarrow$ Casas)
%    \item Numerical experiments: e.g., for different values of Tichenov constant, statistical tests showing error with respect to current sample (relationship to decomposition): a kind of cross-validation process
%\end{enumerate}
%}

\begin{abstract}
%% Text of abstract
The study of optimal control problems under uncertainty plays an important role in scientific numerical simulations. This class of optimization problems is strongly utilized in engineering, biology and finance. In this paper, a stochastic gradient method is proposed for the numerical resolution of a nonconvex stochastic optimization problem on a Hilbert space. We show that, under suitable assumptions, strong or weak accumulation points of the iterates produced by the method converge almost surely to stationary points of the original optimization problem. Measurability and convergence rates of a stationarity measure are handled, filling a gap for applications to nonconvex infinite dimensional stochastic optimization problems. The method is demonstrated on an optimal control problem constrained by a class of elliptic semilinear partial differential equations (PDEs) under uncertainty. 
\end{abstract}

%%Graphical abstract
%\begin{graphicalabstract}
%\includegraphics{grabs}
%\end{graphicalabstract}

%%Research highlights
%\begin{highlights}
%\item Research highlight 1
%\item Research highlight 2
%\end{highlights}

\begin{keyword}
PDE-constrained optimization, stochastic gradient method, averaged cost minimization, PDEs with randomness, nonconvex infinite-dimensional optimization
%% keywords here, in the form: keyword \sep keyword

%% PACS codes here, in the form: \PACS code \sep code

%% MSC codes here, in the form: \MSC code \sep code
%% or \MSC[2008] code \sep code (2000 is the default)

\end{keyword}

\end{frontmatter}

%% main text
\section{Introduction}\label{section:introduction}
Uncertainty appears in many optimization problems, for instance in PDE-constrained optimization, where the data used in the PDE may be unknown or based on measurements. Optimal solutions may exhibit a significant dependence on those data and mathematical formulations of the optimization problem should take into account these data, or uncertainties. This class of optimization problems is strongly utilized in engineering, biology, finance, and economics (see, e.g., \cite{GL2022, GKV2019, Kouri2018, Papa1995} and references therein). A popular modeling choice is to optimize over the average of the objective function that is parametrized with respect to the uncertainty. The corresponding stochastic optimization problem is formally written as
\begin{equation}\label{eq:main_prb}
\min_{u\in H} \, \left \lbrace j(u)= \mathbb{E}[\mathcal{J}(u,\xi)] = \int_\Omega \mathcal{J}(u,\xi(\omega)) \D P(\omega) \right\rbrace.
\end{equation}
This type of problem is sometimes called risk-neutral to distinguish it from risk-averse (cf.~\cite[Chapter 6]{Shapiro}) formulations. A nice introduction to applications and modeling choices for optimal control problems involving PDEs can be found in \cite{frutos}.

In this paper, we focus on a nonconvex stochastic optimization problem of the form \eqref{eq:main_prb}. In view of applications to PDE-constrained optimization, we assume that the control space $H$ is a real Hilbert space. Here, $\xi\colon \Omega \rightarrow \Xi$ is a (measurable) vector-valued random variable defined on a given probability space $(\Omega,\mathcal{F}, P)$ with images in a (real) complete separable metric space $\Xi$. The expectation is denoted by $\mathbb{E}[\cdot]$ and a function $\mathcal{J}\colon H \times \Xi \rightarrow \mathbb{R}$ is given.

Our focus is on the application of the classical stochastic gradient method for finding a stationary point to Problem \eqref{eq:main_prb} with an emphasis on investigating its properties in the case where $j$ is smooth but possibly nonconvex. The method is described in its most general form in Algorithm~\ref{alg:PSG_Hilbert_Nonconvex_Unconstrained}. 
\begin{algorithm}[H] 
\begin{algorithmic}[0] %0 for no line numbering
\STATE \textbf{Initialization:} Take an initial $u_1 \in H$.
\FOR{$n=1,2,\dots$}
\STATE Generate $\xi_n \in \Xi$, independent of $\xi_1, \dots, \xi_{n-1}$.
%\STATE Choose $t_n \geq 0$.% satisfying \eqref{eq:Robbins-Monro-stepsizes}.
\STATE Set $u_{n+1} := u_n - t_n \mathcal{G}(u_n,\xi_n)$, $t_n \geq 0$.
\ENDFOR
\end{algorithmic}
\captionof{algorithm}{Stochastic Gradient Method} 
\label{alg:PSG_Hilbert_Nonconvex_Unconstrained}
\end{algorithm}
Algorithm~\ref{alg:PSG_Hilbert_Nonconvex_Unconstrained} can be seen as a stochastic approximation of the gradient descent method, since it replaces the actual gradient $\nabla j$ by an estimate calculated from a randomly selected realization from the sample space. It relies on the existence of a \emph{stochastic gradient} $\mathcal{G}\colon H\times \Xi \rightarrow H$ such that $\E[\mathcal{G}(u,\xi)] \approx \nabla j(u).$ In Algorithm \ref{alg:PSG_Hilbert_Nonconvex_Unconstrained} the iterate $u_{n+1}$ depends on the stochastic gradient $\mathcal{G}$ evaluated along the sequence $(u_n,\xi_n)$. The price that is paid for the noisy gradient is the step-size $t_n$, which needs to decrease at the appropriate rate to ensure convergence, which can cause the method to stall. The appropriate choice of the step-size, given later in \eqref{eq:Robbins-Monro-stepsizes}, appeared in the classical root-finding method by Robbins and Monro from 1951 \cite{RM}. In spite of these practical difficulties, the stochastic gradient method is now a well-known tool in several fields of applied mathematics, such as machine learning and optimization (see, e.g., \cite{BZ, Bottou, duchi, MKN} and the references therein).  

The convergence of the (projected) stochastic gradient method on a Hilbert space was shown for convex problems in \cite{CC} and for convex problems with bias in \cite{GP}. Recently, we showed convergence of the stochastic proximal gradient method for nonconvex problems on a Hilbert space in \cite{GS2021}. This paper aims to complement the results in \cite{GS2021} by focusing on the method's properties when applied to problems on a Hilbert space that are smooth but possibly nonconvex. While the focus in \cite{GS2021} was in obtaining asymptotic convergence results, here we discuss other aspects of the simpler method, including convergence rates and measurability.

%The convergence theory here is easier than in \cite{GS2021} on account of the smoothness but is missing from the literature. We refer the reader to the recent manuscripts \cite{BZ,MKN} for a discussion about the application of stochastic methods in optimal control with randomness and for some numerical issues. 

%We will discuss in detail how the conditions on the step-size parameter $t_n$ and on the \emph{stochastic gradient} $G:H\times \Xi \rightarrow H$ guarantee the asymptotic convergence of the method. Briefly, if the step-size $t_n$ decreases at an appropriate rate, and subject to smoothness assumptions on the cost functional, strong or weak accumulations points of the iterates produced by the method converge almost surely to stationary points of the problem.

To motivate the application and to fix ideas, we
follow \cite{KS} and consider a stochastic optimal control problem constrained by the following boundary problem for a second-order elliptic semilinear PDE under uncertainty: 
\begin{equation}\label{eq:PDEs_intro}
\begin{aligned}
-\nabla \cdot (k(x,\omega)\nabla y(x,\omega))+c(x,\omega)y(x,\omega)+N(y(x,\omega),x,\omega)                        &= [B(\omega)u](x)+b(x,\omega),  \\
k(x,\omega)\frac{\partial y}{\partial n}(x,\omega)&=0,
\end{aligned}
\end{equation}
to be satisfied for a.e.~$\omega \in \Omega$ and $x$ in a bounded domain $D \subset \R^d$. %Here, $D$ is a given
%bounded open subset of $\mathbb{R}^d$, for $d=1,2,3$, with a Lipschitz boundary $\partial D$. 
The random fields $k\colon D \times \Omega \rightarrow \mathbb{R}$, $c\colon D \times \Omega \rightarrow \mathbb{R}$ and $b\colon D \times \Omega \rightarrow \mathbb{R}$ are given and $N\colon H^1(D)\times D \times \Omega \rightarrow \mathbb{R}$ is a random nonlinear operator. The function $u\in L^2(D)$ plays the role of the control and $B(\cdot)$ is an (linear) operator-valued random variable. Existence and uniqueness results for \eqref{eq:PDEs_intro} are classical for a fixed realization $\omega$; regularity of the solution with respect to the Bochner space $L^q(\Omega, H^1(D))$ for appropriate $q \geq 1$ can also be shown; see \cite{KS}.

Now, for a given $\tilde{J}\colon H^1(D)\rightarrow \mathbb{R}$  and $\rho\colon L^2(D)\rightarrow \mathbb{R}$, consider the following optimal control problem
\begin{equation}\label{eq:main_prb_PDEs}
\min_{u\in L^2(D)}\lbrace \mathbb{E}[\tilde{J}(y)] +\rho(u) \rbrace \quad
\mbox{subject to }\quad y=y_u(\cdot,\omega) \mbox{ solves } \eqref{eq:PDEs_intro}.
\end{equation}
An example of a cost functional often encountered in such applications is a tracking-type functional with Tikhonov regularization, i.e., 
\begin{equation}\label{eq:example_problem}
\tilde{J}(y):= \frac{1}{2}\| y-y_D \|^2_{L^2(D)},\; \quad \rho(u) :=\frac{\lambda}{2}\| u \|_{L^2(D)}^2,
\end{equation}
for a given $\lambda\geq 0$ and a desired target $y_D\in L^2(D)$. 

In simulations, the random functions are generated using random vectors; i.e., $k(x,\omega)=\hat{k}(x,\xi^k(\omega))$ for appropriate $\hat{k}$, and likewise for the other elements. An example of this structure is a truncated Karhunen--Lo\`eve expansion; see the example in Section~\ref{sec:numerics}.
Collecting the random elements, we set $\xi=(\xi^k, \xi^c, \xi^b, \xi^N, \xi^B).$  %To avoid needless notational clutter, we dispense with differing symbols; by convention $k(x,\xi)=\hat{k}(x,\xi(\cdot))$ will denote the random element, whereas $k(x,\omega)=\hat{k}(x,\xi(\omega))$ will denote a deterministic element for a fixed $\omega$; the first notation is favored when discussing the algorithm, and the second is more natural in the analysis of the random PDE.
If a solution to \eqref{eq:PDEs_intro} exists, the control-to-state operator $\mathcal{S}$ and corresponding parametrized $S$ defined by 
\begin{equation}\label{eq:param_S}
\cS(\xi(\omega))u=S(\omega)u =y_u(\omega) \in H^1(D)
\end{equation}
are well-defined \footnote{We underline the following fact: by the Doob-Dynkin's Lemma, %if the random datas depends on $\xi\colon \omega \rightarrow \Xi$, then the solution $y$ of \eqref{eq:PDEs_intro} does so as well. Hence, we can say $y=\hat y(\cdot,\xi)$. Furthermore, 
if the random fields are generated by a random finite-dimensional vector $\xi\colon \Omega \rightarrow \Xi \subset \R^m$ with uncorrelated random variables (i.e., finite-dimensional noise), then the solution $y$ of \eqref{eq:PDEs_intro} is also finite-dimensional noise and $y=\hat y (x, \xi)$; see \cite[Section 4.1]{frutos}. 
%As stated in the introduction, the data of the PDEs depend on $\xi$ in the sense that $k=k(x,\xi)$, $c=c(x,\xi)$, $N=N(y,\xi,x)$, $B=B(x,\xi)$, $b=b(x,\xi)$. The following lemma establishes that . His proof follows easily by invoking the Doob-Dynkin's Lemma.
%\begin{lemma}
%In addition to Assumption \ref{hp1}-\ref{objective-function},
%if $k=k(x,\xi)$, $c=c(x,\xi)$, $N=N(y,\xi,x)$, $B=B(x,\xi)$, $b=b(x,\xi)$  where $\xi:\Omega \rightarrow \Xi$ is a random variable, then the solution
%$y$ of \eqref{eq:PDEs_intro} is so that $y=y(\cdot, \xi)$.
%\end{lemma}
} and it is possible to reduce Problem \eqref{eq:main_prb_PDEs} to a problem of the form \eqref{eq:main_prb} with $H=L^2(D).$ Indeed, we can define the reduced (random variable) objective by
\begin{equation*}
\label{eq:reduced-functional}
\mathcal{J}(u,\xi):= \tilde{J}({\cS}(\xi)u)+\rho(u).
\end{equation*}
Let us remark that since the PDE constraint \eqref{eq:PDEs_intro} is semilinear, the mapping $u\mapsto S(\omega)u$ is nonlinear and consequently $j$ cannot expected to be convex.

The paper is organized as follows. After preliminaries, Section \ref{sub:stoch_grad_method} includes the following contributions:
\begin{itemize}
\item Lemma~ \ref{lem:sufficient-conditions-boundedness}: Sufficient conditions for ensuring almost sure boundedness of the iterates generated by Algorithm~\ref{alg:PSG_Hilbert_Nonconvex_Unconstrained}.
    \item Lemma~\ref{lem:measurability}: Sufficient conditions ensuring the measurability of the sequence $\lbrace u_n \rbrace$ of vector-valued iterates generated by Algorithm~\ref{alg:PSG_Hilbert_Nonconvex_Unconstrained}, filling a theoretical gap.
    \item Theorem~\ref{theorem:convergence-algorithm1}: Almost sure convergence of weak or strong accumulation points of $\lbrace u_n \rbrace$ to stationary points of Problem~\eqref{eq:main_prb}.
    \item Example~\ref{example-line-search}: An example showing how a naive application of the Armijo line search procedure  causes the stochastic gradient method to diverge, further demonstrating the necessity of the Robbins--Monro step-size rule.
    \item Theorem~\ref{thm:convergence-rate}: Almost sure convergence rates of the square norm of the gradient.
%    \item Proposition~\ref{prop:after_casas}: Local norm convergence given a quadratic growth condition.
\end{itemize}
Section \ref{sec:OCforPDEs} introduces a class of semilinear elliptic PDEs that is more general than \eqref{eq:main_prb_PDEs}. The stochastic gradient is defined and for the optimal control problem, the conditions from Section \ref{sub:stoch_grad_method} ensuring convergence are verified. 
Section~\ref{sec:numerics} contains numerical experiments demonstrating convergence rates from Theorem~\ref{thm:convergence-rate}.
 %the applicability of the results in Theorem \ref{theorem:convergence-algorithm1} will be discussed. 
%The computation of the stochastic gradient $G:L^2(D)\times \Omega \rightarrow L^2(D)$ will be addressed as well.

\section{Convergence results}\label{sub:stoch_grad_method}
\subsection{Preliminaries}\label{sec:preliminaries}
Throughout, $(\Omega,\mathcal{F},P)$ will denote a complete probability space, where $\Omega$ represents
the sample space, $\mathcal{F}\subset 2^{\Omega}$ is the $\sigma$-algebra of events on the power set of 
$\Omega$, denoted by $2^{\Omega}$, and $P\colon\Omega \rightarrow [0,1]$ is a probability measure.  
The operator $\mathbb{E}\left[ \cdot \right] $ denotes the expectation with respect to this distribution; for a random variable $\xi\colon \Omega \rightarrow \mathbb{R}$, this is defined by
\begin{equation}
\label{eq:expectation-definition-rv}
\mathbb{E}[\xi]=\int_{\Omega} \xi(\omega)\; \D P(\omega).
\end{equation}
For a Banach space $(X, \lVert \cdot \rVert_X)$, we denote the dual space by $(X^*, \lVert \cdot \rVert_{X^*})$ and the dual pairing by $\langle \cdot, \cdot \rangle_{X^*,X}.$ 
An $X$-valued random variable $y$ is Bochner integrable if there exists a
sequence $\{ y_n\}$ of $P$-simple functions $y_n\colon \Omega \rightarrow X$
such that $\lim_{n \rightarrow \infty} \int_{\Omega} \lVert y_n(\omega)-y(\omega) \rVert_X \D P(\omega) = 0$.
The limit of the integrals of $y_n$ gives the Bochner integral
(the expectation), defined analogously to \eqref{eq:expectation-definition-rv} by
\begin{equation*}
  \E[y]=\int_\Omega y(\omega) \D P(\omega) = \lim_{n \rightarrow \infty} \int_{\Omega} y_n(\omega) \D P(\omega).
\end{equation*}
Clearly, this expectation is an element of $X$. The Bochner space $L^p(\Omega,X)$ is the set of all (equivalence classes of) strongly measurable functions $y\colon\Omega \rightarrow X$ having finite norm, where the norm is defined by
$$\lVert y \rVert_{L^p(\Omega,X)}:= \begin{cases}
                                     (\int_\Omega \lVert y(\omega) \rVert_X^p \D P(\omega))^{1/p}, \quad &p < \infty\\
                                     \esssup_{\omega \in \Omega} \lVert y(\omega) \rVert_X, \quad &p=\infty
                                    \end{cases}.
$$
Let $\mathcal{L}(Y,Z)$ denote the space of all bounded linear operators from $Y$ to $Z$ (both Banach spaces). We recall that a mapping  $\cA\colon \Omega \rightarrow \mathcal{L}(Y,Z)$ is said to be a uniformly measurable operator-valued function (alternatively called a uniform random operator) if there exists a sequence of countably-valued operator-valued random variables $\{\cA_n\}$ in $\mathcal{L}(Y,Z)$ converging almost surely to $\cA$ in the uniform operator topology.
The set $L^p(\Omega,\mathcal{L}(Y,Z))$ corresponds to all uniform random operators
%strongly measurable operator-valued (i.e., uniformly measurable) random variables 
$\cA\colon \Omega \rightarrow \mathcal{L}(Y,Z)$ such that $(\int_{\Omega} \lVert \cA(\omega)\rVert_{\mathcal{L}(Y,Z)}^p \D P(\omega))^{1/p} < \infty$ if $p \in [1,\infty)$ and $\esssup_{\omega \in \Omega} \lVert \cA(\omega)\rVert_{\mathcal{L}(Y,Z)} < \infty$ for the case $p=\infty$.
The adjoint and inverse operators of random operators are to be understood in the almost sure sense; e.g., for $\cA$, the adjoint operator is the uniformly measurable operator-valued function $\cA^*$ such that for all $(y, z^*) \in Y \times Z^*$, 
\begin{align*}
P(\{\omega \in \Omega: \langle z^*, \cA(\omega)y \rangle_{Z^*,Z} =  \langle  \cA^{*}(\omega)z^*,y \rangle_{Y^*,Y} \}) = 1.
\end{align*} 
Recall that if $Y$ is separable, strong and weak measurability of the mappings of the form $y\colon \Omega \rightarrow Y$ coincide (cf.~\cite[Corollary 2, p.~73]{Hille1996}).

A filtration is a sequence $\{ \mathcal{F}_n\}$ of sub-$\sigma$-algebras of $\mathcal{F}$ such that {$\mathcal{F}_1 \subset \mathcal{F}_2 \subset \cdots \subset \mathcal{F}.$} 
Given a Banach space $X$, we define a discrete $X$-valued stochastic process as a collection of $X$-valued random variables indexed by $n$, in other words, the set $\{ \beta_n\colon \Omega \rightarrow X : n \in \N\}.$ 
The stochastic process is said to be adapted to a filtration $\{ \mathcal{F}_n \}$ if and only if $\beta_n$ is $\mathcal{F}_n$-measurable for all $n$. Suppose $\mathcal{B}(X)$ denotes the set of Borel sets of $X$.
The natural filtration is the filtration generated by the sequence $\{\beta_n\}$ and is given by $\mathcal{F}_n = \sigma(\{\beta_1, \dots ,\beta_n\}) = \{ \beta_i^{-1}(B)\colon B \in \mathcal{B}(X), i = 1, \dots, n\}$.
%\footnote{The $\sigma$-algebra generated by a random variable $\beta:\Omega \rightarrow \R$ is given by $\sigma(\beta) = \{ \beta^{-1}(B): B \in \mathcal{B}(\mathbb{R})\}$. Analogously, the $\sigma$-algebra generated by the set of random variables $\{ \beta_1, \dots, \beta_n\}$ is the smallest $\sigma$-algebra such that $\beta_i$ is measurable for all $i=1, \dots, n$.} 
If for an event $F \in \mathcal{F}$ it holds that $P(F) = 1$, or equivalently, $P({\Omega}\backslash F) = 0$, we say $F$ occurs almost surely (a.s.); we equivalently use the shorthand a.e.~$\omega \in F$. Sometimes we also say that such an event occurs with probability one. 
A sequence of random variables $\{\beta_n\}$ is said to converge almost surely to a random variable $\beta$ if and only if %see Gut2013, Def. 4.4
$P\left(\left\lbrace \omega \in \Omega: \lim_{n \rightarrow \infty} \beta_n(\omega) = \beta(\omega) \right\rbrace\right) = 1.$
For an integrable random variable $\beta:\Omega \rightarrow \R$, the conditional expectation is denoted by $\E[\beta | \mathcal{F}_n]$, which is itself a random variable that is $\mathcal{F}_n$-measurable and which satisfies $\int_A \E[\beta | \mathcal{F}_n](\omega) \D P(\omega) = \int_A \beta(\omega) \D P(\omega)$ for all $A \in \mathcal{F}_n$. Almost sure convergence of $X$-valued stochastic processes and conditional expectation are defined analogously.

For an open subset $V$ of a Hilbert space $H$ and a function $j \colon V \rightarrow \R$, %we denote the G\^{a}teaux derivative at $u \in V$ in the direction $v \in H$  by $dj(u; v).$  
the Fr\'echet derivative of $j$ at $u\in V$ is denoted  by $j'(u)\in \mathcal{L}(H,\R)=H^*$. The inner product is denoted by $(\cdot,\cdot)_H$. %For an open subset $V$ of a Hilbert space $H$ and a 
For a Fr\'echet differentiable function $j\colon V \rightarrow \R$, the gradient $\nabla j \colon V \rightarrow H$ is the Riesz representation of the map $j':V \rightarrow H^*$, i.e.,~it satisfies $( \nabla j(u), v )_H = \langle j'(u),v \rangle_{H^*,H}$ for all $u \in H$ and $v \in H.$  The set $C_L^{1,1}(V)$ contains all continuously differentiable functions on $V \subset H$ with an $L$-Lipschitz gradient, meaning
$\lVert \nabla  j(u) - \nabla j(v) \rVert_H \leq L \lVert u - v \rVert_H$
is satisfied for all $u,v \in V.$ Suppose now $j \in C_L^{1,1}(V)$, $V\subset H$ open and convex. Then for all $u, v \in H$,
\begin{align}\label{eq:taylor_est}
j(v) +  (\nabla j(v), u-v)_H &- \frac{L}{2} \lVert u - v \rVert^2_H \leq  j(u) \leq \\ 
&j(v) + (\nabla j(v), u-v)_H + \frac{L}{2} \lVert u-v\rVert^{2}_H.\nonumber
\end{align}
The second Fr\'echet derivative at $u$ is denoted by $j''(u) \in \mathcal{L}(H, H^*)$; the Hessian is denoted by $\nabla^2 j(u) \in \mathcal{L}(H,H)$.
Weak and strong convergence is denoted by $\rightharpoonup$ and $\rightarrow$, respectively.
An operator $T\colon H_1 \rightarrow H_2$ between two Hilbert spaces $H_1$ and $H_2$ is said to be completely continuous if $y_n \rightharpoonup \bar{y}$ in $H_1$ implies $T y_n \rightarrow T \bar{y}$ in $H_2$. Weak continuity means $y_n \rightharpoonup \bar{y}$ in $H_1$ implies $T y_n \rightharpoonup T \bar{y}$ in $H_2$. Finally, a function $f\colon H \rightarrow \R$ is said to be weakly lower semicontinuous if  
\begin{equation*}
\mbox{if } v_k \rightharpoonup \bar{v} \mbox{ in } H, \; \mbox{ then } \liminf_{k \rightarrow \infty } f(v_k)  \geq f(\bar{v}). 
\end{equation*}

\subsection{Main results}\label{subsect:main_results} 
In this section, we study the asymptotic convergence of Algorithm~\ref{alg:PSG_Hilbert_Nonconvex_Unconstrained} for problems of the form \eqref{eq:main_prb}. We assume that the chosen step-sizes in Algorithm~\ref{alg:PSG_Hilbert_Nonconvex_Unconstrained} satisfy the (Robbins--Monro) conditions 
\begin{equation}\label{eq:Robbins-Monro-stepsizes}
 t_n \geq 0, \quad \sum_{n=1}^\infty t_n = \infty, \quad \sum_{n=1}^\infty t_n^2 < \infty.
\end{equation} 
To simplify the notation in this section, the inner product $(\cdot,\cdot)_H$ and the norm $\lVert \cdot \rVert_H$ on $H$ are denoted by $(\cdot,\cdot)$ and $\| \cdot \|$, respectively. We consider the following assumptions.
\begin{assumption}
\label{asu1}
Let $\{\mathcal{F}_n \}$ be a filtration, let $\{ u_n\}$ and $\{\xi_n \}$ be sequences of iterates and random vectors generated by Algorithm \ref{alg:PSG_Hilbert_Nonconvex_Unconstrained} with a stochastic gradient $\mathcal{G}\colon H \times \Xi \rightarrow H$. We assume \\
\subasu \label{asu1i} The sequence $\{ u_n\}$ is a.s.~contained in the bounded set $U \subset H$ and $u_n$ is adapted to $\mathcal{F}_n$ for all $n$.  \\
\subasu \label{asu1ii} On an open and convex set $V$ such that $U \subset V  \subset H$, the expectation $j$ is bounded below and belongs to $C_L^{1,1}(V)$.\\
\subasu \label{asu1iii} For all $n$, the $H$-valued random variable
\begin{equation}
\label{eq:def-rn}
r_n:= \E[\cG(u_n,\xi_n) | \mathcal{F}_n] - \nabla j(u_n)
\end{equation}
is adapted to $ \mathcal{F}_n$ and for $K_n:=\esssup_{\omega \in \Omega} \lVert r_n(\omega)\rVert$, the conditions \mbox{$\sup_{n} K_n<\infty$} and \mbox{$ \sum_{n=1}^\infty t_n K_n< \infty$} are satisfied.
\end{assumption}

In the following analysis, it will also be helpful to define the  $H$-valued random variable
\begin{equation}
\label{eq:def-wn}
\mathfrak{w}_n := \cG(u_n, \xi_n) - \E[\cG(u_n, \xi_n) | \mathcal{F}_n].
\end{equation}

In this section, we will make use of the following two results, which can be found in \cite[Theorem 9.4]{M2011} and \cite{RS}, respectively. We use the notation $\beta^{-}:=\max\{0,-\beta \}.$

\begin{lemma}[Almost sure convergence of quasimartingale]
\label{lemma:Quasimartingale_convergence_theorem}
Let $\{\mathcal{F}_n\}$ be a filtration and $v_n$ be a (real-valued) random variable adapted to $\mathcal{F}_n$. If 
$$\sup_n \E[v_n^{-}]<\infty \mbox{ and }
\sum_{n=1}^\infty \E\big[|\E[v_{n+1} - v_n| \mathcal{F}_n] |\big]  < \infty$$
%~\footnote{Note $\E\big[|\E[v_{n+1} - v_n| \mathcal{F}_n] |\big] = \int_{\Xi} |\E[v_{n+1} - v_n| \mathcal{F}_n]  (\xi) | \D P(\xi). $},$$
 then $\{v_n \}$ converges a.s.~to a $P$-integrable random variable $v_\infty$ and it holds that $$\E [ | v_\infty |] \leq \liminf_n \E[|v_n|]<\infty.$$
\end{lemma}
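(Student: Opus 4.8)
The plan is to reduce the statement to the classical Doob martingale convergence theorem via the (discrete-time) Doob decomposition of the quasimartingale $\{v_n\}$. Since the conditional increments $\E[v_{n+1}-v_n \mid \mathcal{F}_n]$ are assumed to be well defined, each $v_n$ is $P$-integrable, which I take as the standing integrability hypothesis. Writing $X_k := \E[v_{k+1}-v_k \mid \mathcal{F}_k]$, the crucial quantitative input is the second hypothesis $\sum_{k} \E[|X_k|] < \infty$, which says precisely that the total conditional variation of the process is summable.

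First I would isolate the predictable part. Define the compensator $A_n := \sum_{k=1}^{n-1} X_k$, which is $\mathcal{F}_{n-1}$-measurable, hence predictable. By Tonelli's theorem (equivalently, monotone convergence), $\E\big[\sum_{k=1}^\infty |X_k|\big] = \sum_{k=1}^\infty \E[|X_k|] < \infty$, so $\sum_k |X_k| < \infty$ almost surely and in $L^1$. Consequently $A_n$ converges almost surely and in $L^1$ to the integrable limit $A_\infty := \sum_{k=1}^\infty X_k$, and in particular $\sup_n \E[|A_n|] \le \sum_k \E[|X_k|] < \infty$.

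Next I would extract the martingale part by setting $M_n := v_n - A_n$. A direct computation gives $\E[M_{n+1}-M_n \mid \mathcal{F}_n] = X_n - (A_{n+1}-A_n) = 0$, so $\{M_n\}$ is an $\{\mathcal{F}_n\}$-martingale. To invoke Doob's martingale convergence theorem I need an $L^1$ bound, and this is where the first hypothesis enters: from $(v_n - A_n)^- \le v_n^- + |A_n|$ I obtain $\sup_n \E[M_n^-] \le \sup_n \E[v_n^-] + \sup_n \E[|A_n|] < \infty$, and since $\E[M_n] = \E[M_1]$ is constant, $\E[|M_n|] = \E[M_1] + 2\E[M_n^-]$ yields $\sup_n \E[|M_n|] < \infty$. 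Doob's theorem then produces an integrable $M_\infty$ with $M_n \to M_\infty$ almost surely.

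Finally, combining the two pieces, $v_n = M_n + A_n \to M_\infty + A_\infty =: v_\infty$ almost surely, with $v_\infty$ integrable. For the concluding inequality, boundedness of $\E[v_n] = \E[M_1] + \E[A_n]$ together with $\sup_n \E[v_n^-] < \infty$ gives $\sup_n \E[v_n^+] = \sup_n(\E[v_n] + \E[v_n^-]) < \infty$, hence $\sup_n \E[|v_n|] < \infty$ and in particular $\liminf_n \E[|v_n|] < \infty$; Fatou's lemma applied to $|v_n| \to |v_\infty|$ then delivers $\E[|v_\infty|] \le \liminf_n \E[|v_n|]$. I expect the main obstacle to be not any single step but the bookkeeping that secures integrability throughout, and specifically the verification of the $L^1$-bound $\sup_n \E[M_n^-] < \infty$ for the martingale part, since this is the only place where the lower-boundedness hypothesis $\sup_n \E[v_n^-] < \infty$ is genuinely used and where the decomposition must be reconciled with Doob's theorem.
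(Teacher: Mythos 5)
Your proof is correct, but note that the paper itself does not prove this lemma at all: it is quoted as a known result, with a citation to M\'etivier \cite[Theorem 9.4]{M2011}, so there is no in-paper argument to compare against. Your route is the Doob-decomposition one: split $v_n = M_n + A_n$ with compensator $A_n = \sum_{k=1}^{n-1}\E[v_{k+1}-v_k\mid\mathcal{F}_k]$, use the summability hypothesis (via Tonelli) to get a.s.\ absolute convergence of $A_n$ to an integrable $A_\infty$, and apply Doob's martingale convergence theorem to $M_n$ after checking $\sup_n\E[M_n^-]\leq \sup_n\E[v_n^-]+\sup_n\E[|A_n|]<\infty$ via the elementary inequality $(a-b)^-\leq a^-+|b|$; all of these steps check out, including the bookkeeping $\E[|M_n|]=\E[M_1]+2\E[M_n^-]$ and the final Fatou argument, which correctly uses boundedness of $\E[v_n]=\E[M_1]+\E[A_n]$ to ensure $\liminf_n\E[|v_n|]<\infty$. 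The classical proof behind the cited reference (Rao's theorem) proceeds differently: it writes the quasimartingale as a difference of two nonnegative supermartingales and invokes the supermartingale convergence theorem twice, a construction that works under the weaker hypothesis of bounded (rather than summable) conditional variation and without the lower bound $\sup_n\E[v_n^-]<\infty$ packaged the same way. Your argument is more elementary and self-contained precisely because the lemma as stated gives you summability of the conditional increments, which makes the compensator converge pointwise; this is a perfectly good trade-off here, and in fact it mirrors how the lemma is actually deployed in the paper's Theorem~\ref{theorem:convergence-algorithm1}, where the summability is what gets verified.
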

\begin{lemma}[Convergence theorem for nonnegative almost supermartingales]\label{lemma:convegence_nonnegative_supermartingales}
Assume that $\{\mathcal{F}_n\}$ is a filtration and $v_n$,
$a_n$, $b_n$, $c_n$ nonnegative random variables adapted to $\{\mathcal{F}_n\}$. If
$$\mathbb{E} [v_{n+1}| \mathcal{F}_n ] \leq v_n(1 + a_n) + b_n - c_n \mbox{ a.s. }$$
and $\sum_{n=1}^{\infty} a_n < \infty, \quad \sum_{n=1}^{\infty} b_n < \infty$ a.s.,
then with probability one, $\lbrace v_n \rbrace $ is
convergent and $\sum_{n=1}^{\infty} c_n < \infty.$
\end{lemma}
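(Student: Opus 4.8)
The plan is to reduce the statement to the case $a_n \equiv 0$ by a multiplicative rescaling, then to build an explicit supermartingale out of $\{v_n\}$ and invoke Doob's convergence theorem for nonnegative supermartingales, with a localization argument to accommodate the fact that $\sum_n a_n$ and $\sum_n b_n$ are finite only almost surely rather than surely.

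First I would set $\alpha_n := \prod_{k=1}^{n-1}(1+a_k)^{-1}$ (with $\alpha_1 := 1$), which is $\mathcal{F}_{n-1}$-measurable, satisfies $0 < \alpha_n \le 1$ since $a_k \ge 0$, and on the full-measure event $\{\sum_n a_n < \infty\}$ converges to a strictly positive limit $\alpha_\infty$. Multiplying the hypothesized inequality by $\alpha_{n+1}$ and using $\alpha_{n+1}(1+a_n) = \alpha_n$ gives
$$\E[\alpha_{n+1}v_{n+1} \mid \mathcal{F}_n] \le \alpha_n v_n + \alpha_{n+1}b_n - \alpha_{n+1}c_n.$$
Writing $\tilde v_n := \alpha_n v_n$, $\tilde b_n := \alpha_{n+1}b_n$ and $\tilde c_n := \alpha_{n+1}c_n$, this is exactly the original form with the $a_n$-term removed; because $\alpha_\infty > 0$ almost surely, convergence of $\{\tilde v_n\}$ is equivalent to convergence of $\{v_n\}$ and $\sum_n \tilde c_n < \infty$ is equivalent to $\sum_n c_n < \infty$, while $\sum_n \tilde b_n \le \sum_n b_n < \infty$. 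Hence it suffices to treat the case $a_n \equiv 0$.

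Next I would construct the candidate supermartingale. Setting $B_n := \sum_{k=1}^{n-1} b_k$ and $z_n := v_n - B_n$, the reduced inequality yields $\E[z_{n+1}\mid\mathcal{F}_n] \le z_n - c_n \le z_n$, so $\{z_n\}$ is a supermartingale; the same computation shows that $w_n := z_n + \sum_{k=1}^{n-1}c_k$ is a supermartingale as well. The difficulty is that $z_n$ need not be bounded below by a deterministic constant, since $B_n \le B_\infty$ only with $B_\infty$ a finite \emph{random} variable. To repair this I would localize: for $M > 0$ define the stopping time $\tau_M := \inf\{n : \sum_{k=1}^{n} b_k > M\}$, so that $B_{n\wedge\tau_M} \le M$ and hence $z_{n\wedge\tau_M} + M \ge 0$. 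Since a stopped supermartingale is again a supermartingale, $\{z_{n\wedge\tau_M}+M\}$ is a nonnegative supermartingale and therefore converges almost surely by Doob's theorem; the same holds for $\{w_{n\wedge\tau_M}+M\}$. On the event $\{\tau_M = \infty\}$ this gives almost sure convergence of both $z_n$ and $w_n$, hence of their difference $\sum_{k=1}^{n-1}c_k$, which is nondecreasing, so $\sum_k c_k < \infty$ and consequently $v_n = z_n + B_n$ converges. Finally, since $\{\tau_M = \infty\} = \{B_\infty \le M\}$ increases to $\{\sum_k b_k < \infty\}$, a set of probability one, letting $M \to \infty$ yields almost sure convergence of $\{v_n\}$ together with $\sum_k c_k < \infty$ on a set of full measure.

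The main obstacle, and the only genuinely delicate point, is the passage from the \emph{almost sure} summability of $\{a_n\}$ and $\{b_n\}$ to a statement about an honest supermartingale: without truncation the process $z_n$ is not bounded below by a constant and the integrability required to apply the convergence theorem can fail. The stopping-time truncation at level $M$ followed by the exhaustion $M\to\infty$ is exactly what fixes this. A secondary bookkeeping point is to verify that each $v_n$ is integrable so that the conditional expectations are meaningful; this follows inductively from the hypothesis together with the integrability of the right-hand side, and can itself be carried along the stopped process if one prefers to avoid assuming integrability a priori.
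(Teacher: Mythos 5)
The paper contains no proof of this lemma to compare against: it is the classical Robbins--Siegmund convergence theorem, imported verbatim from \cite{RS}. Your argument is correct and is essentially the standard proof from that source: rescaling by $\alpha_n=\prod_{k=1}^{n-1}(1+a_k)^{-1}$ to eliminate the multiplicative term, compensating by the partial sums of the $b_k$ (and of the $c_k$) to produce supermartingales, stopping at $\tau_M$ to restore a uniform lower bound, invoking Doob's convergence theorem, and exhausting over $M$ using the almost sure finiteness of $\sum_k b_k$. Using both $z_n$ and $w_n$ and taking their difference to recover $\sum_k c_k<\infty$ is a clean way to organize the conclusion.

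The one point that needs tightening is your closing remark on integrability. The lemma assumes only that $a_n$ and $b_n$ are a.s.\ finite, not integrable, so integrability of $v_n$ cannot be ``carried along inductively'' from the hypothesis; moreover, stopping at $\tau_M$ alone does not produce an integrable process, since already $z_{1\wedge\tau_M}=v_1$ may fail to be integrable. The standard repair is twofold: interpret all conditional expectations in the generalized sense for nonnegative random variables (legitimate here, because the hypothesis forces $\E[v_{n+1}\mid\mathcal{F}_n]<\infty$ a.s., so no $\infty-\infty$ ambiguity arises in the supermartingale computations), and additionally intersect with the $\mathcal{F}_1$-measurable events $\{v_1\le K\}$. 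On such an event the stopped, truncated process satisfies $(z_{1\wedge\tau_M}+M)\mathbbm{1}_{\{v_1\le K\}}\le K+M$, and the supermartingale inequality itself propagates the bound $\E[(z_{n\wedge\tau_M}+M)\mathbbm{1}_{\{v_1\le K\}}]\le K+M$ to every $n$, so Doob's theorem applies; letting $K\to\infty$ after $M\to\infty$ costs nothing, since $\{v_1<\infty\}$ has full measure. With this emendation your proof is complete.
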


Now, we will provide sufficient conditions for the boundedness condition in Assumption~\ref{asu1i}. We generalize the arguments from \cite{Bottou1998} to Hilbert-valued iterates.

\begin{lemma}
 \label{lem:sufficient-conditions-boundedness}
Suppose that $\{u_n \}$ is generated by Algorithm~\ref{alg:PSG_Hilbert_Nonconvex_Unconstrained} (with step-sizes given by \eqref{eq:Robbins-Monro-stepsizes}) and $u_n$ is $\mathcal{F}_n$-measurable for all $n$, where $\{\mathcal{F}_n \}$ is a filtration. Additionally, suppose Assumption~\ref{asu1iii} holds and there exists a $\gamma \geq 1$ such that 
\begin{equation}
\label{eq:angle-condition}
\inf_{\lVert u\rVert^2 \geq \gamma} (\nabla j(u),u)\geq 0
\end{equation}
and $A_k, B_k \geq 0$ (for $k=2,3,4$) such that
\[
\E[\lVert \mathcal{G}(u,\xi) \rVert^k ] \leq A_k + B_k \lVert u \rVert^k, \quad k=2,3,4.
\]
 Then $\{ u_n\}$ is a.s.~contained in a bounded set.
\end{lemma}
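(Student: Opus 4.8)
The plan is to use the squared norm $h_n := \|u_n\|^2$ as a Lyapunov function and to exhibit it (or a simple function of it) as a nonnegative almost-supermartingale, so that Lemma~\ref{lemma:convegence_nonnegative_supermartingales} (or the quasimartingale theorem, Lemma~\ref{lemma:Quasimartingale_convergence_theorem}) forces it to converge a.s.\ and hence stay bounded. First I would expand the update rule: since $u_{n+1} = u_n - t_n \cG(u_n,\xi_n)$, we have $h_{n+1} = h_n - 2t_n(\cG(u_n,\xi_n),u_n) + t_n^2\|\cG(u_n,\xi_n)\|^2$, and taking $\E[\,\cdot\,|\mathcal{F}_n]$ together with $\E[\cG(u_n,\xi_n)|\mathcal{F}_n] = \nabla j(u_n) + r_n$ from \eqref{eq:def-rn} yields
\[
\E[h_{n+1}|\mathcal{F}_n] = h_n - 2t_n(\nabla j(u_n),u_n) - 2t_n(r_n,u_n) + t_n^2\,\E[\|\cG(u_n,\xi_n)\|^2|\mathcal{F}_n].
\]
Here I would use that $u_n$ is $\mathcal{F}_n$-measurable and $\xi_n$ is drawn independently, so the last conditional expectation is bounded via the $k=2$ growth bound by $t_n^2(A_2 + B_2 h_n)$; the bias term is controlled by $2t_n|(r_n,u_n)| \leq t_n K_n(1+h_n)$. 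Both produce coefficients of the form $t_n K_n + t_n^2$, which are summable by Assumption~\ref{asu1iii} and \eqref{eq:Robbins-Monro-stepsizes}.

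The role of the angle condition \eqref{eq:angle-condition} is to dispose of the one remaining, potentially destabilizing term $-2t_n(\nabla j(u_n),u_n)$: on the event $\{h_n\geq\gamma\}$ it is nonpositive and can be dropped, leaving exactly the almost-supermartingale inequality $\E[h_{n+1}|\mathcal{F}_n]\leq h_n(1+a_n)+b_n$ with summable $a_n,b_n\geq 0$, as required by Lemma~\ref{lemma:convegence_nonnegative_supermartingales}. On the complementary event $\{h_n<\gamma\}$ the iterate is trapped in a ball of radius $\sqrt\gamma$, and there $(\nabla j(u_n),u_n)$ is bounded: writing $\nabla j(u_n)=\E[\cG(u_n,\xi_n)|\mathcal{F}_n]-r_n$ and using the growth condition with Assumption~\ref{asu1iii} gives $\|\nabla j(u_n)\|\leq \sqrt{A_2+B_2\gamma}+\sup_n K_n$, so a single step can push $h_n$ above $\gamma$ by at most an $O(t_n)$ amount.

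The hard part is precisely this last point. The inside-the-ball drift is only $O(t_n)$, and since $\sum_n t_n=\infty$ it is not summable, so the almost-supermartingale inequality cannot be made to hold globally with a summable perturbation; the excursions of $h_n$ across the sphere $\|u\|^2=\gamma$ must be controlled by exploiting that the favorable drift outside the ball compensates the outward kicks from inside. This is the confinement step that generalizes the argument of \cite{Bottou1998} to Hilbert-valued iterates, and it is where the third- and fourth-order growth bounds ($k=3,4$) are genuinely used: passing to the quadratic Lyapunov $h_n^2$ produces cross terms $t_n^2 h_n\|\cG\|^2$, $t_n^3\sqrt{h_n}\|\cG\|^3$ and $t_n^4\|\cG\|^4$ whose conditional expectations are closed off by the $k=2,3,4$ bounds into coefficients of $h_n^2$ carrying the summable factors $t_n^2,t_n^3,t_n^4$, while the drift $-4t_n h_n(\nabla j(u_n),u_n)$ is again nonpositive outside the ball. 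I would then verify the hypotheses of Lemma~\ref{lemma:Quasimartingale_convergence_theorem} for the nonnegative process $h_n^2$ — controlling the positive part of the conditional increments by the summable terms above and the bounded excursions, and getting the negative part for free by telescoping the expected increments against the lower bound $h_n^2\geq 0$ — to conclude that $h_n^2$, and hence $h_n$, converges a.s.\ to a finite limit, so that $\{u_n\}$ is a.s.\ contained in a bounded set. I expect the delicate bookkeeping to lie entirely in making the excursion/compensation near $\|u\|^2=\gamma$ rigorous, rather than in the recursion estimates themselves.
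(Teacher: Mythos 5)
There is a genuine gap, and it sits exactly at the point you flagged as ``the hard part.'' Your proposed resolution --- passing from $h_n=\lVert u_n\rVert^2$ to the quadratic Lyapunov function $h_n^2$ and invoking Lemma~\ref{lemma:Quasimartingale_convergence_theorem} --- does not actually fix the problem, because squaring does not change its structure. Inside the ball $\{\lVert u\rVert^2<\gamma\}$ the conditional drift of $h_n^2$ contains the term $-4t_n h_n(\nabla j(u_n),u_n)$, which (just as for $h_n$) has uncontrolled sign and magnitude $O(t_n)$ there; since the iterates are supposed to spend essentially all their time in a bounded region and $\sum_n t_n=\infty$, the quantity $\sum_n \E\big[|\E[v_{n+1}-v_n\,|\,\mathcal{F}_n]|\big]$ required by the quasimartingale lemma cannot be shown finite, and the perturbation $b_n$ in Lemma~\ref{lemma:convegence_nonnegative_supermartingales} cannot be made summable either. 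Your phrase ``controlled by exploiting that the favorable drift outside the ball compensates the outward kicks from inside'' names the difficulty but no step of the proposal implements such a compensation; no choice of power $h_n^p$ will, since every such function has nonvanishing derivative inside the ball.

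The missing idea (which is how the paper, following Bottou's argument, proceeds) is to use a Lyapunov function that is \emph{flat} inside the ball: $f_n:=\varphi(\lVert u_n\rVert^2)$ with $\varphi(x)=0$ for $x<\gamma$ and $\varphi(x)=(x-\gamma)^2$ for $x\geq\gamma$. Since $\varphi'(\lVert u_n\rVert^2)=0$ whenever $\lVert u_n\rVert^2<\gamma$, the non-summable first-order drift inside the ball is multiplied by zero and never appears; outside the ball the angle condition \eqref{eq:angle-condition} makes the first-order drift nonpositive, so it can be dropped. What survives, after the one-sided Taylor-type inequality $\varphi(y)-\varphi(x)\leq\varphi'(x)(y-x)+(y-x)^2$ and the moment bounds for $k=2,3,4$ (these enter exactly where you predicted, through the expansion of $(\lVert u_{n+1}\rVert^2-\lVert u_n\rVert^2)^2$), is an inequality of the form $\E[f_{n+1}\,|\,\mathcal{F}_n]\leq f_n(1+Ct_nK_n+Bt_n^2)+At_n^2$ with summable coefficients, to which Lemma~\ref{lemma:convegence_nonnegative_supermartingales} applies directly; a.s.\ convergence of $f_n$ then yields a.s.\ boundedness of $\lVert u_n\rVert$. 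So your recursion estimates, the treatment of the bias via $K_n$, and the identification of where the $k=3,4$ moments enter are all sound, but without the truncated Lyapunov function the argument cannot be closed.
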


\begin{proof}
We define the sequence $f_n:=\varphi(\lVert u_n \rVert^2)$ using the function 
\[ \varphi(x):= \begin{cases}
0 & \text{if } x < \gamma\\
(x-\gamma)^2 & \text{if } x\geq \gamma.
\end{cases}
\]
One can verify that $\varphi$ satisfies the inequality
\[
\varphi(y) - \varphi(x) \leq \varphi'(x) (y-x) + (y-x)^2.
\]
Hence, we have with $g_n:=\mathcal{G}(u_n,\xi_n)$ and $u_{n+1}=u_n - t_n g_n$,
\begin{align*}
    f_{n+1} - f_n &\leq (-2t_n (u_n,g_n) + t_n^2 \lVert g_n\rVert^2) \varphi'(\lVert u_n\rVert^2) \\
    &\quad + 4t_n^2(u_n,g_n)^2 -4t_n^3 \lVert g_n\rVert^2(u_n,g_n) + t_n^4 \lVert g_n\rVert^4 \\
    & \leq (-2t_n (u_n,g_n) + t_n^2 \lVert g_n\rVert^2) \varphi'(\lVert u_n\rVert^2) + 4t_n^2 \lVert u_n\rVert^2 \lVert g_n\rVert^2 \\
    &\quad +4t_n^3 \lVert u_n\rVert \lVert g_n\rVert^3 + t_n^4 \lVert g_n\rVert^4
    \\
    &\leq (-2t_n (u_n,g_n) + t_n^2 \lVert g_n\rVert^2) \varphi'(\lVert u_n\rVert^2) 
      + 8 t_n^2 \lVert u_n\rVert^2 \lVert g_n\rVert^2 + 2 t_n^4 \lVert g_n\rVert^4.
\end{align*}
Using $\E[\lVert g_n \rVert^2|\mathcal{F}_n] = \E[\lVert \mathcal{G}(u_n,\xi)\rVert^2]$ and the fact that $u_n$ is $\mathcal{F}_n$-measurable, 
\begin{align*}
    \E[f_{n+1}|\mathcal{F}_n] - f_n &\leq  ( -2t_n (u_n,\nabla j(u_n))  + 2t_n\lVert u_n\rVert K_n %) %\varphi'(\lVert u_n\rVert^2)  %\\
    %& \quad 
    +t_n^2 \E[\lVert \mathcal{G}(u_n,\xi)\rVert^2]) \varphi'(\lVert u_n\rVert^2) \\
    &\quad + 8 t_n^2 \lVert u_n\rVert^2 \E[\lVert \mathcal{G}(u_n,\xi)\rVert^2] + 2 t_n^4 \E[\lVert \mathcal{G}(u_n,\xi)\rVert^4].
    \end{align*}
%\CG{Notice that if $\lVert u_n\rVert^2 < \gamma$, then $\varphi'(\lVert u_n\rVert^2) = 0$.} 
For the case $\lVert u_n\rVert^2 < \gamma$, notice that $\varphi'(\lVert u_n\rVert^2) = 0$. Using the growth conditions and \eqref{eq:angle-condition}, 
we get the existence of constants $A_0$ and $B_0$ such that (for $n$ large enough)
\begin{equation}
\label{eq:inequality-boundedness}
\begin{aligned}
\E[f_{n+1}|\mathcal{F}_n] - f_n 
 \leq %(-2 t_n (u_n,\nabla j(u_n))+
 2t_n\lVert u_n\rVert K_n  \varphi'(\lVert u_n\rVert^2)+t_n^2 (A_0 + B_0 \lVert u_n\rVert^4).
 \end{aligned}
    \end{equation}
For the case $\lVert u_n\rVert^2 < \gamma$, 
%notice that $\varphi'(\lVert u_n\rVert^2) = 0$ and 
\eqref{eq:inequality-boundedness} yields
\begin{equation}
\label{eq:inequality-boundedness2}
 \E[f_{n+1}|\mathcal{F}_n] -f_n \leq t_n^2 (A_0 + B_0 \gamma^2).
    \end{equation}
%\CG{I think the following statement is incorrect since we seem to be only observing the subsequence $\{u_n \}$ satisfying $\| u_n\|^2 < \gamma$, which is automatically bounded. I don't understand what is wrong about setting $A:=A_0+B_0 \gamma^2$ (or the original formulation, where we chosen $A$ large enough).} Thus, the conclusion follows by Lemma~\ref{lemma:convegence_nonnegative_supermartingales}.
Otherwise, let $\lVert u_n \rVert \geq \gamma \geq 1$. Note that $B_0 \lVert u_n\rVert^4 \leq B\varphi(\lVert u_n\rVert^2)$ and additionally, $2\| u_n\| \varphi'(\| u_n\|^2) \leq C \varphi (\| u_n\|^2)$ for $B, C$ sufficiently large. Therefore, by \eqref{eq:inequality-boundedness}, we have 
\[
 \E[f_{n+1}|\mathcal{F}_n] -f_n \leq  Ct_n K_n f_n+t_n^2 (A + B f_n).
    \]
We have chosen $A$ large enough to handle the case \eqref{eq:inequality-boundedness2}. 
Rearranging, 
\[
 \E[f_{n+1}|\mathcal{F}_n] \leq f_n(1+ Ct_n K_n + t_n^2 B) +t_n^2 A.
    \]
Since $\sum_{n=1}^\infty t_n K_n < \infty$ and $\sum_{n=1}^\infty t_n^2 < \infty$, we have that $\{f_n \} $ is almost surely convergent by Lemma~\ref{lemma:convegence_nonnegative_supermartingales}, implying that $\{\lVert u_n\rVert\}$ and hence $\{u_n\}$ are almost surely bounded.
\end{proof}

Next, we provide sufficient conditions for verifying measurability. Typically, this property is easily argued in the finite-dimensional setting, but in the infinite-dimensional case, one must distinguish between weak and strong measurability. A workaround is to assume separability of the underlying space, so that these notions coincide. 
\begin{lemma}
\label{lem:measurability}
Suppose $K$ is a closed subset of a (real) separable Hilbert space $H$, $\Xi$ is a (real) complete separable metric space, and let $\{\mathcal{F}_n\}$ defined by $\mathcal{F}_n = \sigma(\xi_1, \dots, \xi_{n-1})$ be the natural filtration generated by the stochastic process $\{ \xi_n\}.$ Suppose $\cG\colon H \times \Xi \rightarrow H$ and $\nabla j\colon V \rightarrow H$ are continuous on $K \times \Xi$ and $K$, respectively, and $\{u_n\}$, defined by the recursion in Algorithm~\ref{alg:PSG_Hilbert_Nonconvex_Unconstrained}, is contained in $K$ for all $n.$ Then $u_n$, as well as the functions $r_n$ and $\mathfrak{w}_n$, defined by \eqref{eq:def-rn} and \eqref{eq:def-wn}, respectively, are adapted to $\mathcal{F}_n$ for all $n.$
\end{lemma}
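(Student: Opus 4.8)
The plan is to argue by induction on $n$, with the central technical point being the strong measurability of the composition $\omega \mapsto \cG(u_n(\omega), \xi_n(\omega))$. First I would record the reduction that makes the separability hypotheses do their work. Since $H$ and $\Xi$ are separable metric spaces, their product $H \times \Xi$ is again a separable metric space, so its Borel $\sigma$-algebra coincides with the product $\sigma$-algebra $\mathcal{B}(H) \otimes \mathcal{B}(\Xi)$. Consequently, whenever $u_n$ is strongly measurable (equivalently, by the corollary on the equivalence of strong and weak measurability recalled in the preliminaries, Borel measurable into $H$) and $\xi_n$ is measurable into $\Xi$, the map $\omega \mapsto (u_n(\omega), \xi_n(\omega))$ is measurable as a map into $K \times \Xi$. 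Because $\cG$ is continuous on $K \times \Xi$, it is Borel measurable there, so the composition is measurable; and since the target $H$ is separable, the same equivalence upgrades this back to strong measurability.

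For the adaptedness of $\{u_n\}$ I would then induct. The base case is immediate: $u_1$ is the deterministic initial point, hence $\mathcal{F}_1$-measurable (note $\mathcal{F}_1$ is trivial). For the inductive step, suppose $u_n$ is $\mathcal{F}_n$-measurable. Then $u_n$ is a fortiori $\mathcal{F}_{n+1}$-measurable, and $\xi_n$ is $\mathcal{F}_{n+1} = \sigma(\xi_1, \dots, \xi_n)$-measurable, so the composition result of the first paragraph shows $\cG(u_n, \xi_n)$ is $\mathcal{F}_{n+1}$-measurable. The recursion $u_{n+1} = u_n - t_n \cG(u_n, \xi_n)$ then exhibits $u_{n+1}$ as a difference of $\mathcal{F}_{n+1}$-measurable $H$-valued maps, so $u_{n+1}$ is $\mathcal{F}_{n+1}$-measurable, closing the induction.

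With $\{u_n\}$ adapted, the remaining assertions follow from the measurability calculus above together with the standard properties of the $H$-valued (Bochner) conditional expectation. Since $\nabla j$ is continuous on $K$ it is Borel measurable, so $\nabla j(u_n)$ is $\mathcal{F}_n$-measurable (and strongly measurable by separability of $H$); the conditional expectation $\E[\cG(u_n,\xi_n) \mid \mathcal{F}_n]$ is by construction $\mathcal{F}_n$-measurable, whence $r_n$ in \eqref{eq:def-rn} is $\mathcal{F}_n$-measurable. For $\mathfrak{w}_n$ in \eqref{eq:def-wn} one combines the fact that $\cG(u_n,\xi_n)$ is $\mathcal{F}_{n+1}$-measurable (composition step) with the $\mathcal{F}_n$-measurability of its conditional expectation, so that $\mathfrak{w}_n$ is $\mathcal{F}_{n+1}$-measurable, in particular a strongly measurable $H$-valued random variable.

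The step I expect to be the main obstacle is the joint-measurability reduction of the first paragraph: in infinite dimensions one cannot freely compose a continuous map with measurable coordinates without first reconciling weak and strong measurability and verifying that the Borel $\sigma$-algebra of the product factorizes as $\mathcal{B}(H) \otimes \mathcal{B}(\Xi)$. The separability assumptions on both $H$ and $\Xi$ are precisely what make these two facts hold, and everything else is bookkeeping with the recursion and the definition of conditional expectation.
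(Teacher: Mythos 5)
Your argument follows essentially the same route as the paper's proof: induction on $n$, with the key technical step being superpositional measurability of $\omega \mapsto \cG(u_n(\omega),\xi_n(\omega))$ obtained from continuity plus separability. The only difference in technique is that you prove the composition step from scratch (factorization of the Borel $\sigma$-algebra of a product of separable metric spaces, plus the equivalence of weak and strong measurability recalled in the preliminaries), whereas the paper simply cites \cite[Lemma 8.2.3]{Aubin1990}; your self-contained version is equally valid.

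There is, however, one point where your conclusion diverges from the lemma's literal claim. You conclude only that $\mathfrak{w}_n$ is $\mathcal{F}_{n+1}$-measurable, while the lemma asserts that $\mathfrak{w}_n$ is adapted to $\mathcal{F}_n$. The source of the discrepancy is the filtration convention. You adhere strictly to the displayed definition $\mathcal{F}_n = \sigma(\xi_1,\dots,\xi_{n-1})$, under which $\xi_n$ is independent of $\mathcal{F}_n$ and the stated claim is in fact not provable: $\mathfrak{w}_n = \cG(u_n,\xi_n) - \E[\cG(u_n,\xi_n)\mid\mathcal{F}_n]$ genuinely depends on $\xi_n$, so it cannot be $\mathcal{F}_n$-measurable unless the noise is degenerate. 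The paper's own proof reaches the stated conclusion by asserting that ``by construction, $\xi_n$ is $\mathcal{F}_n$-measurable,'' i.e., by implicitly working with $\mathcal{F}_n = \sigma(\xi_1,\dots,\xi_n)$; that convention matches the definition of natural filtration given in the paper's preliminaries, but it contradicts the formula displayed in the lemma (and also sits uneasily with the convergence proofs, where the identity $\E[\lVert \cG(u_n,\xi_n)\rVert^2 \mid \mathcal{F}_n] = \E[\lVert\cG(u_n,\xi)\rVert^2]$ requires $\xi_n$ to be independent of $\mathcal{F}_n$). So you are not missing an idea --- you prove everything that is provable under the lemma's stated convention, and arguably your indexing is the one consistent with the rest of the paper --- but as written your proof and the statement do not match on the adaptedness of $\mathfrak{w}_n$: you should either flag that this claim requires the alternative convention $\xi_n \in \mathcal{F}_n$, or state your conclusion as $\mathcal{F}_{n+1}$-adaptedness and note the discrepancy with the lemma.
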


\begin{proof}
Note that $K$, as a closed subset of a (separable) Hilbert space, is a complete (separable) metric space when equipped with the norm restricted to the subset. Now, we recall that continuity in combination with separability of $H$ and $\Xi$ implies the superpositional measurability of $\cG$, i.e., if $\omega \mapsto u(\omega)$ and $\omega \mapsto \xi(\omega)$ are measurable, then so is the mapping $\omega \mapsto \cG(u(\omega),\xi(\omega))$; cf.~\cite[Lemma 8.2.3]{Aubin1990}. Additionally, for a Banach space $X$ and function $f: \Omega \rightarrow X$, we recall the following fact: if $f$ is $\mathcal{F}_n$-measurable,  %$f^{-1}(B) \in \mathcal{F}_n$ for every $B\in \mathcal{B}(X)$, 
then $f$ is $\mathcal{F}_m$-measurable for any $m\geq n$ on account of the inclusion $\mathcal{F}_n \subset \mathcal{F}_{n+1}$ for every $n.$

Now, we prove the statement of the theorem by induction. Notice that $u_1\in K$ is deterministic by definition in the algorithm. By construction, $\xi_{n}$ is  $\mathcal{F}_{n}$-measurable. Suppose $u_{n}$ is $\mathcal{F}_{n}$-measurable. Hence the mappings $\omega \mapsto \cG(u_n(\omega), \xi_n(\omega))$ and $\omega \mapsto \nabla j(u_n(\omega))$ are $\mathcal{F}_{n}$-measurable. By definition, the conditional expectation $\E[\cG(u_n,\xi_n)|\mathcal{F}_n]$ is $\mathcal{F}_n$-measurable. The $\mathcal{F}_n$-measurability of $r_n$ and $\mathfrak{w}_n$ follows as does the $\mathcal{F}_{n+1}$-measurability of $u_{n+1} = u_n -t_n \cG(u_n,\xi_n).$
\end{proof}

\begin{remark}
\label{rem:measurability}
In some applications, it may be easier to verify measurability using the function $G(u,\omega)$ as opposed to $\cG(u,\xi)$, where $G(u,\omega)$ is defined by $G(u,\omega)=\mathcal{G}(u,\xi(\omega))$. In this case, it is sufficient to assume (in addition to $u\mapsto \nabla j(u)$ being  continuous) that $G$ is Carath\'eodory, i.e., $K \ni u \mapsto G(u,\omega)$ is continuous for almost all $\omega$ and $\omega \mapsto G(u,\omega)$ is measurable for all $u$. It is clear that the proof above is simplified since the mapping $\omega \mapsto G(u(\omega),\omega)$ is measurable. Under these conditions, it therefore follows that $u_n, r_n$, and $\mathfrak{w}_n$ are adapted to $\mathcal{F}_n$ for all $n.$
\end{remark}

\begin{assumption}\label{assumption:well-posedness-unconstrained}
There exists a function $M\colon H \rightarrow [0,\infty)$, that is bounded on bounded sets, such that 
 $$\E[\lVert \cG(u,\xi) \rVert^2] \leq M(u) \quad \forall u\in H. $$
\end{assumption}

\begin{remark}
\label{remark-assumptions-unconstrained}
Since we assume that $\{ u_n\}$ is a.s.~bounded by Assumption~\ref{asu1i}, Assumption~\ref{assumption:well-posedness-unconstrained} covers standard conditions on the growth of the second moment, such as the conditions $\E[\lVert \cG(u,\xi) - \nabla j(u) \rVert^2] \leq \sigma$ or $\E[\lVert \cG(u,\xi) \rVert^2] \leq M_1 + M_2 \lVert u \rVert^2$ for given constants $\sigma, M_1, M_2.$
\end{remark}

The convergence of the algorithm is proven next. While convergence is covered by the theory in \cite{GS2021}, a Sard-type assumption is needed there that is difficult to verify in the infinite-dimensional setting. Given the smoothness assumed here, a more direct proof is possible by generalizing arguments from \cite[Section 4.3]{Bottou} made for the finite-dimensional setting. This generalization includes the handling of different topologies on the Hilbert space as well as the inclusion of the additive bias term $r_n$, which is important for applications with PDEs, where solutions can only be approximated; see \cite{GW}. 

\begin{theorem}
\label{theorem:convergence-algorithm1}
Let Assumptions~\ref{asu1} and \ref{assumption:well-posedness-unconstrained} hold and let $\{u_n\}$ be a sequence generated by Algorithm \ref{alg:PSG_Hilbert_Nonconvex_Unconstrained} with the step-size rule~\eqref{eq:Robbins-Monro-stepsizes}. Then, the sequence $\{ j(u_n)\}$ converges a.s.~and $\liminf_{n \rightarrow \infty}  \lVert \nabla j(u_n) \rVert = 0$ a.s. Furthermore,
\begin{enumerate}
\item If $F(u):= \lVert \nabla j (u) \rVert^2$ satisfies $F \in C^{1,1}_{L_F}(V)$, then $\lim_{n \rightarrow \infty} \nabla j(u_n) = 0$ a.s. In particular, every strong accumulation point of $\{u_n\}$ is stationary point for $j$ with probability one.
\item If, in addition to the assumptions above, the map $u \mapsto \| \nabla j(u) \|$ is weakly lower semicontinuous, 
then weak accumulation points of $\{u_n\}$ must be a stationary points for $j$ with probability one.
\end{enumerate}
\end{theorem}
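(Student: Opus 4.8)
The plan is to convert the $L$-Lipschitz descent estimate \eqref{eq:taylor_est} into a recursion amenable to Lemma~\ref{lemma:convegence_nonnegative_supermartingales}. Writing $g_n := \cG(u_n,\xi_n)$, so that $u_{n+1}-u_n=-t_n g_n$ and, by \eqref{eq:def-rn}, $\E[g_n\mid\mathcal{F}_n]=\nabla j(u_n)+r_n$, I would first apply the upper bound in \eqref{eq:taylor_est} (legitimate since $V$ is convex and contains the iterates by Assumption~\ref{asu1i}) to get $j(u_{n+1})\le j(u_n)-t_n(\nabla j(u_n),g_n)+\tfrac{L}{2}t_n^2\|g_n\|^2$. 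Taking $\E[\,\cdot\mid\mathcal{F}_n]$ and using $\mathcal{F}_n$-measurability of $u_n$, the cross term becomes $-t_n\|\nabla j(u_n)\|^2-t_n(\nabla j(u_n),r_n)$, which I bound using $\|r_n\|\le K_n$ (Assumption~\ref{asu1iii}); the quadratic term is controlled by $\E[\|g_n\|^2\mid\mathcal{F}_n]\le M(u_n)$ (Assumption~\ref{assumption:well-posedness-unconstrained}), uniformly bounded on the bounded set $U$. Since $\nabla j$ is Lipschitz, $\|\nabla j(u_n)\|$ is bounded on $U$ by some $C_j$, so with $v_n:=j(u_n)-\inf_V j\ge 0$ I arrive at $\E[v_{n+1}\mid\mathcal{F}_n]\le v_n - t_n\|\nabla j(u_n)\|^2 + b_n$, where $b_n:=C_j K_n t_n+\tfrac{L}{2}(\sup_U M)\,t_n^2$ is summable by \eqref{eq:Robbins-Monro-stepsizes} and Assumption~\ref{asu1iii}. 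Lemma~\ref{lemma:convegence_nonnegative_supermartingales} (with $a_n=0$, $c_n=t_n\|\nabla j(u_n)\|^2$) then yields that $\{j(u_n)\}$ converges a.s.\ and $\sum_n t_n\|\nabla j(u_n)\|^2<\infty$ a.s. Because $\sum_n t_n=\infty$, this summability forces $\liminf_n\|\nabla j(u_n)\|=0$ a.s., which establishes the two assertions preceding the enumerated items.

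For item (1) I would run the same machinery on $F:=\|\nabla j\|^2$. The key difficulty is that the descent estimate for $F$ produces the drift $-t_n(\nabla F(u_n),\nabla j(u_n))$, which is sign-indefinite and cannot simply be discarded. The crucial observation is the self-bounding inequality for a nonnegative $C^{1,1}_{L_F}$ function: minimizing the descent bound $F(v)\le F(u)+(\nabla F(u),v-u)+\tfrac{L_F}{2}\|v-u\|^2$ over $v$ and using $F\ge 0$ gives $\|\nabla F(u)\|^2\le 2L_F F(u)$, hence $\|\nabla F(u_n)\|\le\sqrt{2L_F}\,\|\nabla j(u_n)\|$. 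Consequently the bad drift is dominated by $t_n\|\nabla F(u_n)\|\,\|\nabla j(u_n)\|\le\sqrt{2L_F}\,t_n\|\nabla j(u_n)\|^2=\sqrt{2L_F}\,t_n F(u_n)$, which is \emph{summable along the trajectory} by the conclusion of Step~1. Taking $\E[\,\cdot\mid\mathcal{F}_n]$ of the $F$-descent estimate and lumping this term, the $r_n$-term (bounded via $\|\nabla F(u_n)\|\le C_F$ on $U$ and $\sum_n t_n K_n<\infty$), and the $t_n^2$-term into a single summable perturbation $\tilde b_n$, I obtain $\E[F(u_{n+1})\mid\mathcal{F}_n]\le F(u_n)+\tilde b_n$ with $\sum_n\tilde b_n<\infty$ a.s. Lemma~\ref{lemma:convegence_nonnegative_supermartingales} then shows $\{F(u_n)\}$ converges a.s.; combined with $\liminf_n F(u_n)=0$ this forces $F(u_n)\to 0$, i.e.\ $\nabla j(u_n)\to 0$ a.s. If $u_{n_k}\to\bar u$ strongly, continuity of $\nabla j$ gives $\nabla j(\bar u)=\lim_k\nabla j(u_{n_k})=0$, so $\bar u$ is stationary.

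Finally, item (2) follows directly once $\nabla j(u_n)\to 0$ a.s.\ is in hand: if $u_{n_k}\rightharpoonup\bar u$, weak lower semicontinuity of $u\mapsto\|\nabla j(u)\|$ yields $0\le\|\nabla j(\bar u)\|\le\liminf_k\|\nabla j(u_{n_k})\|=0$, so $\bar u$ is stationary. The main obstacle throughout is the treatment in item (1) of the sign-indefinite drift of $F$: absorbing it as a \emph{summable perturbation} $b_n$ rather than a multiplicative factor $a_n$ (which would be fatal since $\sum_n t_n=\infty$), enabled by the self-bounding inequality together with $\sum_n t_n\|\nabla j(u_n)\|^2<\infty$, is precisely what makes Lemma~\ref{lemma:convegence_nonnegative_supermartingales} applicable and upgrades $\liminf$ to $\lim$. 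The only other point requiring care is routine verification of integrability and of the conditional-expectation identities, which follow from the a.s.\ boundedness of $\{u_n\}$ and the measurability established in Lemma~\ref{lem:measurability}.
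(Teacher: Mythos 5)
Your first step (a.s.\ convergence of $\{j(u_n)\}$ and $\liminf_n\lVert\nabla j(u_n)\rVert=0$ via the descent lemma and Lemma~\ref{lemma:convegence_nonnegative_supermartingales}) and your treatment of item (2) coincide with the paper's argument. Where you genuinely diverge is item (1). The paper controls the drift $(\nabla F(u_n),\nabla j(u_n))$ by computing $\nabla F(u)=2(\nabla^2 j(u))^*\nabla j(u)$ via the chain rule and using $\lVert\nabla^2 j(u_n)\rVert_{\mathcal{L}(H,H)}\le L$ to get $\lVert\nabla F(u_n)\rVert\le 2L\lVert\nabla j(u_n)\rVert$; it then proves summability \emph{in expectation}, $\sum_n t_n\E[\lVert\nabla j(u_n)\rVert^2]<\infty$ as in \eqref{eq:unconstrained-proof-summability}, and concludes with the quasimartingale theorem, Lemma~\ref{lemma:Quasimartingale_convergence_theorem}. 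You instead obtain $\lVert\nabla F(u_n)\rVert\le\sqrt{2L_F}\,\lVert\nabla j(u_n)\rVert$ from the self-bounding inequality $\lVert\nabla F\rVert^2\le 2L_F F$ and then stay with Lemma~\ref{lemma:convegence_nonnegative_supermartingales}, exploiting that its perturbation $b_n$ may be a random, adapted, merely a.s.\ summable sequence; this lets you feed in $\sum_n t_n F(u_n)<\infty$ a.s.\ directly from the first step and skip the expectation-summability detour. Your route buys two things: it makes no appeal to second derivatives of $j$ (the paper's chain-rule step tacitly assumes $j$ is twice differentiable, a hypothesis that appears only in Lemma~\ref{lemma:twice-differentiability}, not in the theorem), and it needs only one of the two convergence lemmas.

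One caveat should be stated explicitly. The self-bounding inequality is derived by minimizing the quadratic upper bound in \eqref{eq:taylor_est} over \emph{all} $v\in H$, i.e.\ by evaluating it at $v=u_n-\nabla F(u_n)/L_F$, and this point need not lie in $V$: the theorem only assumes $F\in C^{1,1}_{L_F}(V)$ for an open convex $V\supset U$, and on a ``thin'' $V$ the inequality $\lVert\nabla F(u)\rVert^2\le 2L_F F(u)$ can genuinely fail. For instance, $F(x)=0.1+x+x^2$ on $V=(-0.1,0.1)$ is nonnegative there with $L_F=2$, yet $\lvert F'(0)\rvert^2=1>4\,F(0)=0.4$. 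Your step is therefore licensed either by the paper's own (imprecise) statement of \eqref{eq:taylor_est} ``for all $u,v\in H$'', or by the mild strengthening that $V$ contain the points $u_n-\nabla F(u_n)/L_F$ (e.g.\ $V=H$, or $V$ a sufficiently large neighborhood of $U$, which is harmless since $\lVert\nabla F(u_n)\rVert$ is a.s.\ bounded). Since this is a strengthening of comparable weight to the paper's implicit twice-differentiability, it does not undermine your strategy, but it should be recorded as a hypothesis rather than left implicit.
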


\begin{proof} 
By Assumption~\ref{asu1ii}, there exists a finite $\bar{j} := \inf_{u \in V} j(u)$. Without loss of generality, we assume $j \geq 0$ on $V$; otherwise one could make the same arguments for $\tilde j:=j-\bar{j} \geq 0.$ 
Since $j \in C_L^{1,1}(V)$, by the estimate in \eqref{eq:taylor_est} and with $g_n:=\cG(u_n,\xi_n)$, it follows that
\begin{equation}
\label{eq:first-inequality-unconstrained-proof}
 j(u_{n+1})  \leq j(u_n) -t_n ( \nabla j(u_n), g_n ) +\frac{Lt_n^2}{2}\lVert g_n \rVert^2.
 \end{equation}
By Assumptions~\ref{asu1iii}, \ref{assumption:well-posedness-unconstrained} and \eqref{eq:def-wn}, we have {$\E[g_n|\mathcal{F}_n] = \nabla j(u_n) + r_n $} and $\E[\lVert g_n\rVert^2 | \mathcal{F}_n] = \E[\lVert \cG(u_n,\xi)\rVert^2] \leq M(u_n).$ Notice that for all $n$,
$$\lVert \nabla j(u_n)\rVert \leq \lVert \nabla j(u_n) - \nabla j(u_1) \rVert + \lVert \nabla j(u_1) \rVert \leq L \, \text{diam}(U) + \lVert \nabla j(u_1) \rVert=:M_1,$$
where $\text{diam}(U)$ denotes the diameter of the subset $U\subset H$, which is bounded by Assumption~\ref{asu1i}.
Thus, taking the conditional expectation with respect to $\mathcal{F}_n$ on both sides of \eqref{eq:first-inequality-unconstrained-proof} and using that $u_n$ and $r_n$ are $\mathcal{F}_n$-measurable, we get
 \begin{equation}
 \label{martingale-inequality-unconstrained}
 \begin{aligned}
  \E[j(u_{n+1})|\mathcal{F}_n] &\leq j(u_n) - t_n ( \nabla j(u_n), \E[g_n|\mathcal{F}_n] ) +\frac{Lt_n^2}{2}\E[\lVert g_n \rVert^2 | \mathcal{F}_n]\\
  & \leq j(u_n) - t_n ( \nabla j(u_n),\nabla j(u_n) +r_n ) +\frac{L  M(u_n) t_n^2}{2}\\
  & \leq j(u_n) - t_n \lVert \nabla j(u_n) \rVert^2 + M_1 t_n K_n + \frac{L M(u_n) t_n^2}{2}.
  \end{aligned}
 \end{equation}
Observe that by Assumption~\ref{asu1i}, the sequence $\{ u_n\}$ belongs to the bounded set $U$ almost surely, and by Assumption~\ref{assumption:well-posedness-unconstrained}, $M(\cdot)$ is bounded on bounded sets. Therefore, $M(u_n)$ is uniformly bounded for all $n$ with probability one. Now, assigning $v_n = j(u_n)$,  $a_n = 0$,  $b_n = M_1 t_n K_n + (L M(u_n) t_n^2)/2$, and $c_n = t_n  \lVert \nabla j(u_n) \rVert^2,$
we get by Lemma \ref{lemma:convegence_nonnegative_supermartingales} that $\{j(u_n)\}$ converges and furthermore,
$\sum_{n=1}^\infty t_n \lVert \nabla j(u_n) \rVert^2 < \infty$
with probability one. Due to the step-size condition \eqref{eq:Robbins-Monro-stepsizes}, it follows that $\liminf_{n \rightarrow \infty}  \lVert \nabla j(u_n) \rVert = 0$ a.s. This proves the first claim.

For the claim in $1.$, we assume that $F\in C^{1,1}_{L_F}(V)$ and we first show that 
\begin{equation}
\label{eq:unconstrained-proof-summability}
\sum_{n=1}^\infty t_n \E[\lVert \nabla j(u_n) \rVert^2] < \infty.
\end{equation}
This can be seen by taking the expectation and summing on both sides of \eqref{martingale-inequality-unconstrained}. Indeed, after rearranging, we get
\begin{equation}
\label{eq:partial-sum-unconstrained-proof}
 \begin{aligned}
\sum_{n=1}^N t_n \E[\lVert \nabla j(u_n) \rVert^2] &\leq \sum_{n=1}^N \left( \E[j(u_n)] - \E[j(u_{n+1})] + M_1 t_n K_n + \frac{LM(u_n) t_n^2}{2}\right)\\
 &\leq \E[j(u_1)] - \bar{j} + \sum_{n=1}^N \left( M_1 t_n K_n + \frac{LM(u_n)t_n^2}{2} \right).
 \end{aligned}
\end{equation}
By Assumption~\ref{asu1iii} and the condition \eqref{eq:Robbins-Monro-stepsizes}, the right-hand side of \eqref{eq:partial-sum-unconstrained-proof} is bounded as $N\rightarrow \infty$. Therefore, since the left-hand side is monotonically increasing in $N$ and bounded above, \eqref{eq:unconstrained-proof-summability} must hold.

Now, with $F(u) = \lVert \nabla j(u) \rVert^2$, we get by the estimate in \eqref{eq:taylor_est} that
\begin{equation}
 \label{eq:LS-h}
\begin{aligned}
 F(u_{n+1})-F(u_n) &\leq -t_n ( \nabla F(u_n), g_n ) +\frac{L_Ft_n^2}{2}\lVert g_n \rVert^2, \\
 F(u_{n})-F(u_{n+1}) &\leq t_n ( \nabla F(u_n), g_n ) +\frac{L_Ft_n^2}{2}\lVert g_n \rVert^2.
 \end{aligned}
 \end{equation}
By the chain rule, taking
$E(u): = \lVert u \rVert^2$, $H(u):= \nabla j(u)$, we have that $F(u) = E(H(u))$ and, in turn, $F'(u) = E'(H(u))H'(u)$. 
Since $E'(u)v = ( 2 u,v)$ and $H'(u)v=\nabla^2j(u)v$, we get that 
\begin{equation}\label{eq:derivative_F}
F'(u)v  = ( 2 \nabla j(u), \nabla^2 j(u) v) = 2( (\nabla^2 j(u))^* \nabla j(u), v ).
\end{equation}
Additionally, by Lipschitz continuity of $\nabla j$ and boundedness of $\{u_n\}$, we have that $\lVert (\nabla^2 j(u_n))^* \rVert_{\mathcal{L}(H,H)} = \lVert \nabla^2 j(u_n) \rVert_{\mathcal{L}(H,H)} \leq L$. Taking the conditional expectation with respect to $\mathcal{F}_n$ on both sides of \eqref{eq:LS-h}, we obtain
\begin{equation}
\label{eq:LS-h-martingale}
\begin{aligned}
 &|\E[F(u_{n+1})|\mathcal{F}_n] - F(u_n)|\\
 & \quad\quad \leq \left|-t_n ( \nabla F(u_n), \E[g_n|\mathcal{F}_n] ) +\frac{L_Ft_n^2}{2} \E[\lVert g_n \rVert^2 | \mathcal{F}_n]\right|\\
 & \quad\quad \leq |-t_n ( 2 (\nabla^2 j(u_n))^* \nabla j(u_n), \nabla j(u_n) +r_n) | +\frac{L_F M(u_n) t_n^2}{2} \\
 %& \leq  2 L t_n  ( \nabla j(u_n),\nabla j(u_n) + r_n) +  \frac{L_F M(u_n)t_n^2}{2} \\
 &  \quad\quad \leq 2 L t_n \lVert \nabla j(u_n) \rVert^2  + 2 L M_1 t_n K_n + \frac{L_F M(u_n) t_n^2}{2}.
 \end{aligned}
\end{equation}

Now we can verify the conditions of Lemma~\ref{lemma:Quasimartingale_convergence_theorem} with $v_n := F(u_n)$. By \eqref{eq:LS-h-martingale}, we have
\begin{align*}
&\sum_{n=1}^\infty \E[|\E[v_{n+1}|\mathcal{F}_n] - v_n|]\\
& \quad \quad \leq  \sum_{n=1}^\infty \E \left[ 2 L t_n \lVert \nabla j(u_n) \rVert^2  + 2 L M_1 t_n K_n + \frac{L_F M(u_n) t_n^2}{2} \right],
\end{align*}
where the right-hand side is finite by Assumption~\ref{asu1iii}, the condition \eqref{eq:Robbins-Monro-stepsizes}, and \eqref{eq:unconstrained-proof-summability}. Naturally, $\sup_{n} \E[v_n^{-}] = 0$. Thus we get by Lemma~\ref{lemma:Quasimartingale_convergence_theorem} that $F(u_n)=\lVert \nabla j(u_n)\rVert^2$ converges a.s., which by the first part of the proof can only converge to zero. We obtain that {$\lim_{n\rightarrow \infty}\nabla j(u_n) =0$} a.s. 

To show claim 2 of this theorem, let us observe that since $\lbrace u_n \rbrace$ is bounded and $H$ is a Hilbert space, there exists a subsequence $\lbrace u_{n_k} \rbrace$ and a limit point $\bar{u}$ such that $u_{n_k} \rightharpoonup \bar{u}$ in $H$. Moreover, by the first claim of this theorem, we know that $\lim_{n\rightarrow \infty} \| \nabla j(u_{n}) \|=0$.
Hence, fix any subsequence $\lbrace u_{n_k} \rbrace$  of the process produced by the Algorithm \ref{alg:PSG_Hilbert_Nonconvex_Unconstrained} which admits a limit point $\bar{u}$ such that $u_{n_k} \rightharpoonup \bar{u}$ in $H$. Weak lower semicontinuity of the mapping $u \mapsto \| \nabla j(u) \|$ ensures that \begin{equation*}\label{eq:for_claim3}
 \| \nabla j(\bar{u})\| \leq    \liminf_{k\rightarrow \infty} \| \nabla j(u_{n_k})\| =0 
\end{equation*}    
and thus the fact that $\bar u$ is a stationary point with probability one.
 \end{proof}

 \begin{remark}
The above theorem yields sufficient conditions for weak or strong cluster points of $\lbrace u_n \rbrace$ to be stationary points. Exploiting additional properties of Problem \eqref{eq:main_prb} can guarantee stronger results, such as (local) convergence to a local optimum. Let us give an example. Under the assumptions for claim 2 of Theorem \ref{theorem:convergence-algorithm1}, suppose additionally that $\bar u$ is a local optimum with $\|\nabla j(\bar u)\|=0$ and there exists a neighborhood $B_r(\bar u)$ of $\bar u$ such that $\|\nabla j(u)\|\neq 0$ for all $u\in B_r(\bar u),\; u \neq \bar u$. Of the set of all possible random sequences  generated by Algorithm \ref{alg:PSG_Hilbert_Nonconvex_Unconstrained}, consider a fixed sequence $\{ u_n \}$ having the following property: $u^*$ is a weak cluster point of $\lbrace u_n \rbrace$ and asymptotically, $\lbrace u_n \rbrace$ stays in $B_r(\bar u)$. If $u\mapsto \| \nabla j(u)\|$ is weakly lower semicontinuous, then by Theorem \ref{theorem:convergence-algorithm1}, $u^*$ must coincide with the local optimum $\bar u$. 
\end{remark}

The condition in the first claim of Theorem~\ref{theorem:convergence-algorithm1} can for instance be verified using the following lemma.
\begin{lemma}
\label{lemma:twice-differentiability}
Assume that $j$ is twice Fr\'echet differentiable with a Lipschitz second-order derivative that is bounded on $V$. Then, the condition $u\mapsto F(u) = \lVert \nabla j (u) \rVert^2$ belonging to $C^{1,1}_{L_F}(V)$ is fulfilled.  
\end{lemma}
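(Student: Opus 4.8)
The plan is to build directly on the gradient formula already isolated in \eqref{eq:derivative_F}. Since $j$ is twice Fr\'echet differentiable, the chain rule applied to $F = E \circ \nabla j$ with $E(w) := \lVert w \rVert^2$ shows $F$ is Fr\'echet differentiable on $V$ with
\[
\nabla F(u) = 2\,(\nabla^2 j(u))^{*}\, \nabla j(u).
\]
First I would confirm $F \in C^1(V)$: the map $u \mapsto \nabla^2 j(u)$ is norm-continuous in $\mathcal{L}(H,H)$ (a consequence of the assumed Lipschitz continuity of the second derivative), $u \mapsto \nabla j(u)$ is continuous, and $(A,w) \mapsto A^{*}w$ is jointly continuous from $\mathcal{L}(H,H) \times H$ into $H$; hence $\nabla F$ is continuous.

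For the Lipschitz estimate, I would fix $u,v \in V$ and split the difference by adding and subtracting $2(\nabla^2 j(u))^{*}\nabla j(v)$:
\[
\nabla F(u) - \nabla F(v) = 2(\nabla^2 j(u))^{*}\big(\nabla j(u) - \nabla j(v)\big) + 2\big((\nabla^2 j(u))^{*} - (\nabla^2 j(v))^{*}\big)\nabla j(v).
\]
Writing $C := \sup_{w \in V}\lVert \nabla^2 j(w) \rVert_{\mathcal{L}(H,H)} < \infty$ (the boundedness hypothesis; note $C \leq L$ since $j \in C^{1,1}_L(V)$) and using $\lVert A^{*} \rVert = \lVert A \rVert$, the first term is bounded by $2C\lVert \nabla j(u) - \nabla j(v)\rVert \leq 2CL\lVert u-v\rVert$ via Lipschitz continuity of $\nabla j$. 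For the second term, Lipschitz continuity of the second derivative gives $\lVert (\nabla^2 j(u))^{*} - (\nabla^2 j(v))^{*} \rVert = \lVert \nabla^2 j(u) - \nabla^2 j(v) \rVert \leq L_2 \lVert u-v\rVert$, so it is bounded by $2 L_2 \lVert u-v\rVert\,\lVert \nabla j(v)\rVert$. Combining yields $F \in C^{1,1}_{L_F}(V)$ with $L_F = 2CL + 2L_2 M_{\nabla}$, where $M_{\nabla} := \sup_{w \in V}\lVert \nabla j(w)\rVert$.

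The crux, and the step I expect to be the main obstacle, is the finiteness of $M_{\nabla}$: a bounded, Lipschitz second derivative alone does not control the magnitude of the gradient on an unbounded $V$. I would close this gap by noting that $\nabla j$ is $C$-Lipschitz on the convex set $V$, so for any fixed $u_0 \in V$ one has $\lVert \nabla j(w)\rVert \leq \lVert \nabla j(u_0)\rVert + C\,\mathrm{diam}(V)$ for all $w \in V$; hence $M_{\nabla} < \infty$ whenever $V$ is bounded. This is precisely the situation relevant here, and since Assumption~\ref{asu1ii} only requires the existence of an open convex $V \supset U$ with $U$ bounded, one may take $V$ to be a bounded open convex neighborhood of $U$, on which the estimate above then furnishes the global Lipschitz constant $L_F$ and so establishes $F \in C^{1,1}_{L_F}(V)$ as stated.
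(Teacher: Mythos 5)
Your proposal is correct and follows essentially the same route as the paper's proof: the chain-rule identity $\nabla F(u) = 2(\nabla^2 j(u))^{*}\nabla j(u)$ from \eqref{eq:derivative_F}, the same add-and-subtract decomposition, and the same two estimates via the bound and the Lipschitz constant of $\nabla^2 j$. The one place you go beyond the paper is worth noting: the paper merely asserts that $\|\nabla j(u)\|$ ``must be bounded on $V$'' (which can genuinely fail on an unbounded $V$, e.g.\ $j''(u)=1+\sin u$ on $\R$ gives bounded Lipschitz $j''$ but $F\notin C^{1,1}(\R)$), whereas your restriction to a bounded open convex $V\supset U$ --- legitimate under Assumption~\ref{asu1ii} and consistent with the paper's own Remark~\ref{rem-Prop35} --- cleanly closes that gap and furnishes the finite constant $L_F$.
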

\begin{proof}
One can easily deduce the claim by the fact that $F(u)=E(H(u))$, where $H(u):=\nabla j(u)$ and $E(u)=\| u \|^2$ and remembering \eqref{eq:derivative_F}. %By the chain rule, $F$ is continuously Fr\'echet differentiable and $F'(u)=  E'(H(u)) H'(u)$. The gradient of the square of the norm in the Hilbert space $H$ is $\nabla E(u)= 2 u$, while $H'(u)=j''(u)$ holds for the second derivative of $j$. 
Given $u_1, u_2 \in V \subset H$,
\begin{align*}
&\| 2 \nabla^2 j(u_1)^* \nabla j(u_1)-2 \nabla^2 j(u_2)^* \nabla j(u_2) \| \\ 
&\leq \| 2 \nabla^2 j(u_1)^* \nabla j(u_1)-2 \nabla^2 j(u_1)^* \nabla j(u_2) \|  + \| 2 (\nabla^2 j(u_1)^* - \nabla^2 j(u_2)^*) \nabla j(u_2) \| \\
& \leq 2 \|  \nabla^2 j(u_1) \| \| \nabla j(u_1)-  \nabla j(u_2) \|  + 2 \| \nabla^2 j(u_1)^* - \nabla^2 j(u_2)^*\|  \| \nabla j(u_2) \|.
\end{align*}
Now, let $c_V$ be the Lipschitz constant of $\nabla^2 j$ on $V$ and $L$ be a constant such that $\sup_{u\in V} \| \nabla^2 j (u) \|\leq L$. One can prove in a straightforward way that $\|\nabla j(u_1) -\nabla j(u_2)\|\leq L \| u_1 - u_2\|$ for any $u_1,u_1 \in V$ and also  $\| \nabla j(u) \|$ must be bounded on $V$. Hence,
\begin{align*}
&\| 2 \nabla^2 j(u_1)^* \nabla j(u_1)-2 \nabla^2 j(u_2)^* \nabla j(u_2) \|\\
& \quad  \leq 2 L^2 \| u_1- u_2 \| + 2 c_V \| u_1-u_2 \| \sup_{u\in V} \| \nabla j(u) \|.
\end{align*}
This yields the claim of this lemma.
%Furthermore,  $\sup_{u\in V} \| \nabla^2 j (u) \|_{\mathcal{L}(H,H)}$ and $\sup_{u\in V} \| \nabla j(u) \|_H$ are bounded by assumption. 
\end{proof}

From Lemma~\ref{lemma:twice-differentiability}, one sees that the question of whether the mapping $u\mapsto F(u)$ belongs to $C^{1,1}_{L_F}$ requires the analysis of the second-order derivative of $j$; this argument is certainly quite involved in the application to PDE-constrained optimization with uncertainties. Moreover, this statement only provides stationarity of strong accumulation points of sequences generated by the stochastic gradient method. This is rather inconvenient, since in a bounded sequence on a Hilbert space one can usually only hope to obtain weakly convergent subsequences. As we will demonstrate in Section~\ref{sec:OCforPDEs}, in certain applications it is in fact possible to prove weak lower semicontinuity of the mapping $u \mapsto \| \nabla j(u) \|$ even if this map is nonconvex.
%with a nonlinear PDE-constraint. 
Briefly, this occurs when the gradient can be split into a completely continuous part and a weak sequential continuous part. In this case, one has the stronger result, namely stationarity of weak accumulation points.

For deterministic problems, a backtracking procedure is frequently employed in combination with the gradient descent method to guarantee descent. The following example demonstrates how the Armijo backtracking rule when combined with stochastic gradients fails in minimizing a function over the real line.

\begin{example}[Armijo line search diverges for SGM]
\label{example-line-search}
Let $\cJ\colon \R \times \Xi \rightarrow \R, (u,\xi) \mapsto (u+\xi)^2$. A stochastic gradient for this function is $\nabla_u \cJ(u,\xi) = 2(u+\xi).$ Now consider a random variable taking two values with equal probability, say $\xi = \pm 1$. The minimum of $\E[\cJ(u,\xi)] = \tfrac{1}{2}((u+1)^2+(u-1)^2)$ is attained at $u=0$. 
 The Armijo rule with one randomly chosen sample $\xi_n$ amounts to finding the smallest integer $m_n$ such that for $\beta > 0$, $\alpha_n=\beta t^{m_n}$, the descent direction $p_n = -\nabla_u \cJ(u_n,\xi_n)$ satisfies the following Wolfe condition for $0 < c < 1$ and $0 < t < 1$:
 $$\cJ(u_n+\alpha_n p_n,\xi_n) \leq \cJ(u_n,\xi_n) + c \alpha_n \nabla_u \cJ(u_n,\xi_n) \cdot p_n.$$
For the choice of function, this condition is equivalent to
$$(u_n - 2 \alpha_n (u_n + \xi_n) + \xi_n)^2 \leq (u_n+\xi_n)^2 - 4 c \alpha_n (u_n + \xi_n)^2,$$
so after simplifying, we see that the Wolfe condition is satisfied for the smallest $m_n$ such that
$\alpha_n \leq 1 - c.$
Let $\alpha \equiv \alpha_n$ be this step-size, which is independent of $n$. Suppose now $\{ u_n \}$ were to converge to the minimizer. Assuming $\varepsilon < \tfrac{\alpha}{1-\alpha}$, a simple argument shows that $|u_n| < \varepsilon$ implies $|u_{n+1}| > \varepsilon$ for all $n$, which contradicts this convergence. Indeed, for $\xi=-1$,
\begin{align*}
 u_{n+1} = u_n - \alpha \nabla_{u} \cJ(u_n, -1) \geq -\varepsilon (1-2\alpha) + 2 \alpha > \varepsilon
\end{align*}
by the assumption on $u_n$ and $\varepsilon$. Additionally, we have for $\xi=1$ that
\begin{align*}
 u_{n+1} = u_n - \alpha \nabla_{u} \cJ(u_n, 1) \leq  \varepsilon (1-2\alpha) - 2 \alpha < -\varepsilon.
 \end{align*}
 Therefore, the next iterate will always leave the $\varepsilon$-neighborhood of the true optimum.
\end{example}
This examples underscores the need for some way to reduce variance along the optimization procedure; as we already demonstrated in Theorem~\ref{theorem:convergence-algorithm1}, that is accomplished by means of the Robbins--Monro rule (2.3). However, there are other modifications of the basic method Algorithm~\ref{alg:PSG_Hilbert_Nonconvex_Unconstrained} that achieve the same effect, for instance by (indefinitely) increasing batch sizes in the stochastic gradient \cite{Wardi, Wardi2}. Adaptive step-size choices were also studied in \cite{George, Yousefian2012}.
Recent advances to adaptively sample in combination with a line-search procedure (in finite dimensions) can be found in \cite{Bollapragada, PS2020}.

\subsection{Asymptotic almost sure convergence rates}
In this subsection we derive some almost sure convergence estimates for Algorithm \ref{alg:PSG_Hilbert_Nonconvex_Unconstrained}. Set $j^* = \inf_{u \in V} j(u)$. The result below is closed to the one developed in previous papers on convergence analysis for stochastic gradient descent methods in the non-convex finite-dimensional setting (see, e.g., \cite{KR,SGD}). However, the analysis there is based on a so-called \emph{expected smoothness assumption} \cite[Assumption 2]{KR}, which seems to be stronger than our setting. %, since it is evaluated globally (here that would mean the whole set $H$) instead of only on the subset where the iterates are bounded. 
Moreover, our result differs from \cite{KR,SGD} due to the presence of bias $r_n$ in the stochastic gradient. 
\begin{theorem}
\label{thm:convergence-rate}
Let Assumptions~\ref{asu1} and \ref{assumption:well-posedness-unconstrained} hold and let $\{u_n\}$ be a sequence generated by Algorithm \ref{alg:PSG_Hilbert_Nonconvex_Unconstrained} with the step-size rule~\eqref{eq:Robbins-Monro-stepsizes}. %Moreover, suppose that $\esssup_{\xi \in \Xi} \| \mathfrak{w}_n(\xi) \|< \infty$ for all $n$. Then, there exists $C,M_1>0$ such that
%\begin{align}
%    \mathbb{E}[j(u_{n+1})-j^*|\mathcal{F}_n] &\leq     %(1+L^2 t_n^2 ) (j(u_n)-j^*) \label{eq:first_conv_est} \\
%     &\quad -\frac{t_n}{2}   \| \nabla j(u_n) \|^2 + C \frac{L}{2} t_n^2 +t_n K_n M_1 \quad \text{a.s.}\nonumber
%\end{align}
%and $\lbrace j(u_n)-j^* \rbrace$ converges almost surely as $n\rightarrow \infty$. Moreover, if in addition to \eqref{eq:Robbins-Monro-stepsizes} 
If we additionally assume that $t_n$ is decreasing and
\begin{equation}\label{eq:step_size_rules2}
\sum_{j=1}^\infty \frac{t_j}{\sum_{k=1}^j t_k} =\infty,
\end{equation}
then, almost surely,
\begin{equation}\label{eq:conv_est_g}
    \min_{t=1,\dots, n} \| \nabla j(u_t)\|^2= o \left( \frac{1}{\sum_{j=1}^{n} t_j} \right).
\end{equation}
\end{theorem}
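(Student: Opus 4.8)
The plan is to extract a rate from the almost sure summability that is already present in the proof of Theorem~\ref{theorem:convergence-algorithm1}. There, the first application of Lemma~\ref{lemma:convegence_nonnegative_supermartingales} (with $c_n=t_n\|\nabla j(u_n)\|^2$) gives, under Assumptions~\ref{asu1} and~\ref{assumption:well-posedness-unconstrained} alone, that $\sum_{n=1}^\infty t_n\|\nabla j(u_n)\|^2<\infty$ almost surely. I would use this as the only probabilistic input. Writing $b_i:=\|\nabla j(u_i)\|^2\ge 0$, $S_n:=\sum_{i=1}^n t_i$, and $m_n:=\min_{1\le t\le n} b_t$, the statement reduces to the pathwise implication that on the full-measure event $\{\sum_i t_i b_i<\infty\}$ one has $S_n m_n\to 0$; since this event has probability one, it suffices to argue along each fixed sample path.

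The $O$-bound is immediate: since $m_n\le b_i$ for every $i\le n$,
\[
S_n m_n=\sum_{i=1}^n t_i\,m_n\le \sum_{i=1}^n t_i\,b_i\le\sum_{i=1}^\infty t_i b_i<\infty,
\]
so $m_n=O(1/S_n)$. To sharpen this to the claimed $o(1/S_n)$, I would localize to a tail of the convergent series. Fix $\varepsilon>0$ and pick $N$ with $\sum_{i>N}t_i b_i<\varepsilon$. For $n>N$, bounding the partial tail below by its smallest summand gives
\[
\varepsilon>\sum_{i=N+1}^n t_i b_i\ge\Big(\min_{N<i\le n}b_i\Big)\sum_{i=N+1}^n t_i=\Big(\min_{N<i\le n}b_i\Big)(S_n-S_N),
\]
so that $m_n\le\min_{N<i\le n}b_i<\varepsilon/(S_n-S_N)$. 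Multiplying by $S_n$ and using $S_n\to\infty$, whence $S_n/(S_n-S_N)\to 1$, yields $\limsup_{n\to\infty}S_n m_n\le\varepsilon$. As $\varepsilon>0$ is arbitrary and $S_n m_n\ge 0$, this forces $S_n m_n\to 0$, which is precisely \eqref{eq:conv_est_g}.

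The one genuine obstacle is the passage from $O$ to $o$: summability of $\sum_i t_i b_i$ by itself only bounds $S_n m_n$, and the refinement requires trading the smallness of the tail $\sum_{i>N}t_i b_i$ against the divergence $S_n-S_N\to\infty$, which is exactly what the truncation at $N$ accomplishes. I would note that this route does not consume the hypotheses that $t_n$ be decreasing or condition~\eqref{eq:step_size_rules2}; indeed, the divergence $\sum_i t_i/S_i=\infty$ follows from $\sum_i t_i=\infty$ by the Abel--Dini theorem. Those hypotheses are instead the natural inputs for the alternative weighted-average estimate $m_n\le \frac{\sum_{i\le n}(t_i/S_i)\,b_i}{\sum_{i\le n}t_i/S_i}$; but since its denominator grows at most logarithmically in $S_n$, this yields only a bound of order $1/\sum_{i\le n}(t_i/S_i)$, weaker than the target $o(1/S_n)$, so I would favor the direct tail argument above.
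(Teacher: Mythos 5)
Your proof is correct, and it takes a genuinely different route from the paper's. The paper introduces an auxiliary weighted-average sequence $T_{n+1}=(1-\eta_n)T_n+\eta_n\|\nabla j(u_n)\|^2$ with $\eta_n=2t_n/\sum_{j\le n}t_j$ (this is where the hypothesis that $t_n$ is decreasing enters, to ensure $\eta_n\in[0,1]$), plugs the resulting identity into the descent inequality \eqref{martingale-inequality-unconstrained}, and applies the Robbins--Siegmund lemma a \emph{second} time to the shifted process $j(u_n)+\tfrac12\sum_{j<n}t_jT_n$; condition \eqref{eq:step_size_rules2} is then used to upgrade $\sum_n t_nT_n<\infty$ and the a.s.\ convergence of $\sum_{j<n}t_jT_n$ into $\sum_{j<n}t_jT_n\to 0$, and finally the minimum is bounded by the weighted average $T_n$. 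You instead take as your only probabilistic input the summability $\sum_n t_n\|\nabla j(u_n)\|^2<\infty$ a.s., which is already established in the proof of Theorem~\ref{theorem:convergence-algorithm1} under Assumptions~\ref{asu1} and~\ref{assumption:well-posedness-unconstrained}, and run a purely deterministic tail-truncation argument pathwise; the passage from $O$ to $o$ via choosing $N$ with $\sum_{i>N}t_ib_i<\varepsilon$ and using $S_n/(S_n-S_N)\to 1$ is exactly right. What your argument buys is economy and generality: it dispenses with both extra hypotheses of the theorem (monotonicity of $t_n$ and \eqref{eq:step_size_rules2} --- the latter, as you note, is in any case implied by $\sum_n t_n=\infty$ through the Abel--Dini theorem), so you in fact prove a slightly stronger statement. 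What the paper's argument buys is the rate for the running weighted average $T_n$ of the squared gradient norms, in the style of the finite-dimensional literature it cites; this averaged quantity dominates the minimum and may be of independent interest, but it is strictly more machinery than the stated conclusion requires.
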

\begin{proof}
We first define for all $n\in \mathbb{N}$, $n>0$,
\begin{equation*}
\eta_n= \frac{2 t_n}{\sum_{j=1}^n t_j}, \quad T_1 =\| \nabla j(u_1) \|^2, \quad T_{n+1}= (1-\eta_n)T_n+\eta_n \| \nabla j(u_n)\|^2.
\end{equation*}
Note that $\eta_1=2$ and $\eta_n \in[0,1]$ for $n>1$ since $t_n$ is decreasing. Indeed, 
$$0\leq \eta_n = \frac{2 t_n}{\sum_{j=1}^n t_j}\leq \frac{2 t_n}{n t_n}\leq 1 \textit{ for } n>1.$$
Moreover,
\begin{equation*}
  2  t_n \| \nabla j(u_n) \|^2 = \sum_{j=1}^n t_j T_{n+1} + t_n T_n - \sum_{j=1}^{n-1} t_j T_n. 
\end{equation*}
Plugging into \eqref{martingale-inequality-unconstrained}, we obtain
 \begin{equation*}
 \begin{aligned}
  & \E\Big[j(u_{n+1}) + \frac{1}{2} \sum_{j=1}^n t_j T_{n+1}\Big|\mathcal{F}_n\Big]= \E[j(u_{n+1})|\mathcal{F}_n ] + \frac{1}{2} \sum_{j=1}^n t_j T_{n+1} \\ &\qquad\leq j(u_n) - \frac{1}{2} t_n T_n + \frac{1}{2}\sum_{j=1}^{n-1} t_j T_{n} + M_1 t_n K_n + \frac{L M(u_n) t_n^2}{2}.
  \end{aligned}
 \end{equation*}
Hence, by Lemma \ref{lemma:convegence_nonnegative_supermartingales} $$\Big\lbrace j(u_n)+ \frac{1}{2} \sum_{j=1}^{n-1} t_j T_{n}\Big\rbrace  \text{ converges with probability one and } \sum_{n=1}^\infty t_n T_n < \infty.$$ Since $\lbrace j(u_n)\rbrace$ converges almost surely by Theorem~\ref{theorem:convergence-algorithm1}, then  $\lbrace \sum_{j=1}^{n-1} t_j T_n\rbrace$ does so as well. Moreover, the above claim yields $$\lim_{n\rightarrow \infty} \frac{t_n}{\sum_{j=1}^{n-1} t_j} \sum_{j=1}^{n-1}t_j T_n=\lim_{n\rightarrow \infty} t_n T_n =0. $$
Using \eqref{eq:step_size_rules2}, $\lim_{n\rightarrow \infty} \sum_{j=1}^{n-1}t_j T_n=0$, that is, $T_n= o \left(  \frac{1}{\sum_{j=1}^{n-1} t_j} \right)$ almost surely. Proceeding by induction and using the definition of the sequence $T_n$ and the fact that $\eta_n \in [0,1]$ for $n>1$, one can easily verify that for any $n>1$ there exists a sequence  $\lbrace \tilde{\eta}_k \rbrace$ in $[0,1]$ such that $\sum_{j=1}^n \tilde{\eta}_j =1$ and $T_n=\sum_{k=1}^{n-1}\tilde{\eta}_k \| \nabla j(u_k)\|^2$. Moreover, 
\begin{align*}
&T_2=\| \nabla j(u_1)\|^2,\\
&T_n \geq \sum_{k=1}^{n-1}\tilde{\eta_k} \min_{t=1,\dots,n-1}  \| \nabla j(u_t)\|^2 = \min_{t=1,\dots,n-1} \|\nabla j(u_t)\|^2 \geq 0, \textit{ for } n>2.
\end{align*}
Thus, $T_n \geq \min_{t=1,\dots,n-1} \|\nabla j(u_t)\|^2 \geq 0$ holds for any $n>1$. This yields \eqref{eq:conv_est_g} and ends the proof.
\end{proof}
As noted in \cite{SGD}, the step-size condition \eqref{eq:step_size_rules2} covers many standard Robbins--Monro step-sizes, such as those of the form $t_n = \tfrac{\theta}{n^s}$ with $\theta>0$ and $s\in (\tfrac{1}{2},1].$

\section{PDE-constrained optimization problems under uncertainty}\label{sec:OCforPDEs}
In \cite{GS2021}, we studied proximal stochastic gradient methods applied to a problem with a specific semilinear elliptic PDE. In this section, we show that more general semilinear elliptic PDEs are permissible than shown in \cite{GS2021}. As a main contribution, we verify the assumptions for the convergence of the stochastic gradient method from Section~\ref{sub:stoch_grad_method} for this class of problems.

We consider the optimal control problem
\begin{equation*}
    \label{eq:objective}
    \min_{u \in U} \, \{ \E[\tilde{J}(y(\cdot), \cdot)] + \rho(u) \}
\end{equation*}
where $y=y(\omega)$ almost surely solves the operator equation
\begin{equation}
\label{eq:PDE_constraint}
e(u,y,\omega):=\mathcal{A}(\omega)y+\mathcal{N}(y,\omega)-\mathcal{B}(\omega) u - \mathfrak{b}(\omega) =0.\end{equation}
This setting is a risk-neutral version of the problem considered in \cite{KS}. In \cite{KS}, a reflexive Banach space $U$ is given for the control space and $Y=H^1(D)$ is the state space ($H^1(D)$ being the separable Hilbert space of $L^2(D)$ functions whose weak derivatives belong to $L^2(D)$). To obtain gradients and measurability of the iterates of Algorithm~\ref{alg:PSG_Hilbert_Nonconvex_Unconstrained} according to Lemma~\ref{lem:measurability}, we make the stronger assumption that $U=H$ is in fact a separable Hilbert space and $Y \hookrightarrow U = U^* \hookrightarrow Y^*$ is a Gelfand triple. The random operators appearing in \eqref{eq:PDE_constraint} are defined by $\cA\colon \Omega \rightarrow \mathcal{L}(Y,Y^*)$, $\mathcal{N}\colon Y\times \Omega \rightarrow Y^*$, $\cB\colon \Omega \rightarrow \mathcal{L}(U,Y^*)$, and $\mathfrak{b}\colon \Omega \rightarrow Y^*$.

The operator equation~\eqref{eq:PDE_constraint} covers the case \eqref{eq:PDEs_intro} given in the introduction with the choice $U=L^2(D)$ and the definitions
\begin{align*}
&\langle \cA(\omega)y,v\rangle_{Y^*,Y}:= \int_D \lbrace k(x,\omega)\nabla y(x) \cdot \nabla v(x) +c(x,\omega)y(x) v(x)\rbrace\D x, \nonumber \\
&\langle \cN(y,\omega),v \rangle_{Y^*,Y}:=\int_D N(y,\omega,x)v(x) \D x,\\ &\langle \cB(\omega)u,v\rangle_{Y^*,Y}:= \int_D [B(\omega)u](x) v(x) \D x , \quad \langle \cb(\omega),v \rangle_{Y^*,Y}:=\int_D b(x,\omega) v(x) \D x. \nonumber
\end{align*}

Throughout this section, we assume the analytical framework given in \cite{KS} applies. In particular, we use the same set of assumptions; for the reader's convenience, these are repeated in Assumptions~\ref{hp1}--Assumption~\ref{objective-function} in the Appendix. These assumptions ensure, among other things, that a unique solution to \eqref{eq:PDE_constraint} exists and belongs to $L^q(\Omega,Y)$ for $q=\tfrac{s\gamma}{1+s/t}$, where $s$ and $t$ are defined in Assumption~\ref{assumption_diff}. In particular, the control-to-state map $S(\omega)\colon U \rightarrow Y$ is well-defined and we can define the reduced objective function
\begin{equation*}
    J(u,\omega):=\tilde{J}(S(\omega)u,\omega)+\rho(u)
\end{equation*}
and the expectation $j(u):=\E[J(u,\cdot)].$

% As stated in the introduction, the data of the PDEs depend on $\xi$ in the sense that $k=k(x,\xi)$, $c=c(x,\xi)$, $N=N(y,\xi,x)$, $B=B(x,\xi)$, $b=b(x,\xi)$. The following lemma establishes that the solution $y$ of \eqref{eq:PDEs_intro} does so as well. His proof follows easily by invoking the Doob-Dynkin's Lemma.
% \begin{lemma}
% In addition to Assumption \ref{hp1}-\ref{objective-function},
% if $k=k(x,\xi)$, $c=c(x,\xi)$, $N=N(y,\xi,x)$, $B=B(x,\xi)$, $b=b(x,\xi)$  where $\xi:\Omega \rightarrow \Xi$ is a random variable, then the solution
% $y$ of \eqref{eq:PDEs_intro} is so that $y=y(\cdot, \xi)$.
% \end{lemma}
% \begin{remark}
% In many practical models, the random coefficients and, in turn, the solution $y$ to \eqref{eq:PDEs_intro} are modeled by using a finite-dimensional noise assumptions and with the help of a truncated expansion, such as the Karhunen-Loeve one (see e.g. \cite{lord}). Furthermore, one has that $k=k(x,\xi_k)$, $c=c(x,\xi_c)$ where $\xi_k$ and $\xi_c$ are independent random variables and same for the other random inputs of the PDEs. Thus, we can say that the inputs of the PDEs depend on $\xi$ where $\xi$ is the random variable defined by $\xi:=(\xi_k,\xi_c, \xi_N,\xi_B,\xi_b)$.
%\end{remark}

\subsection{Analysis of the optimal control problem} \label{sec:analysis_PDE_prob}
In this section, we verify the assumptions used in Theorem~\ref{theorem:convergence-algorithm1}, which ensures that the stochastic gradient method converges for our class of optimal control problems with semilinear PDE constraints. The next proposition concerns the smoothness of the objective function and the definition of the stochastic gradient. This analysis differs somewhat from \cite{Kouri2016, KS} since risk-averse objectives are generally nonsmooth and smoothness of the objective, which is needed to obtain a gradient, is not shown there. 
%In it, we use the canonical projection $\iota \colon H \rightarrow H^*$ associated with the Riesz representation theorem.
Here, $y_{\tilde{u}}$ and $p_{\tilde{u}}$ denote the solutions to \eqref{eq:PDE_constraint} and \eqref{eq:adjoint_p} with $u=\tilde{u}$, respectively.
\begin{proposition}\label{proposition_conditions_limit_points}
Let Assumptions \ref{hp1}--\ref{objective-function} hold. Then, a stochastic gradient $G\colon U\times \Omega \rightarrow U$ is given by  
\begin{equation}
G(u,\omega)= \nabla \rho( u) -  \mathcal{B}^*(\omega) p,
\label{eq:stoch-gradient}
\end{equation}
where $p=p_u(\omega) \in Y$ is almost surely the solution of the adjoint equation 
\begin{equation}\label{eq:adjoint_p}
%\begin{cases}
(\cA^*(\omega) + \cN^*_y(y_u(\omega), \omega)) p= -D_y \tilde{J}(y_u(\omega),\omega)
\end{equation} 
and $y=y_u(\omega)$ is almost surely the solution to \eqref{eq:PDE_constraint}.
With the random variable $C\colon \Omega \rightarrow [0,\infty)$ from Assumption~\ref{hp1} we have for all $u$:
\begin{align}
&\| y_u(\omega) \|_{Y}^{\gamma}\leq C^{-1}(\omega) [ \| \cb(\omega)\|_{Y^*}+ \| \cB(\omega) \|_{\mathcal{L}(U,Y^*)} \| u \|_{U}] \quad \text{a.s.}, \label{eq:estimate_dual_arc_1} \\
&\| p_u(\omega) \|_{Y}^{\gamma}\leq C^{-1}(\omega) \| D_y \tilde{J}(y_u(\omega),\omega)\|_{Y^*} \quad \text{a.s.} \label{eq:estimate_dual_arc}
\end{align}
For arbitrary $u_1$ and $u_2$ belonging to $U$, we have
\begin{equation}\label{eq:Lipschitz_y}
    \| y_{u_1}(\omega) -y_{u_2}(\omega) \|_{Y}^{\gamma} \leq C^{-1}(\omega)  \| \cB(\omega) \rVert_{\mathcal{L}(U, Y^*)} \| u_1-u_2\|_{U} \quad \text{a.s.}
\end{equation} 
\end{proposition}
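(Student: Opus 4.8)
The plan is to treat the two separate claims of the proposition in turn: first the adjoint representation \eqref{eq:stoch-gradient} of the stochastic gradient, obtained from a pathwise Lagrangian/implicit-function argument, and then the three a priori bounds \eqref{eq:estimate_dual_arc_1}--\eqref{eq:Lipschitz_y}, obtained as energy estimates by testing the state and adjoint equations against suitable elements and invoking the coercivity and strong monotonicity encoded in the random constant $C(\omega)$ from Assumption~\ref{hp1}. Throughout I would argue for a fixed $\omega$ (pathwise), the almost-sure statements following since all the structural constants are random variables in the framework of Section~\ref{sec:OCforPDEs}.

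For the gradient, I would first note that under Assumptions~\ref{hp1}--\ref{objective-function} the linearized operator $\cA(\omega)+\cN_y(y_u(\omega),\omega)\in\LL(Y,Y^*)$ is boundedly invertible, so the implicit function theorem applied to $e(u,y,\omega)=0$ from \eqref{eq:PDE_constraint} yields Fréchet differentiability of the control-to-state map $S(\omega)$, with $S'(\omega)u\,h=z$ where $z$ solves the linearized equation $(\cA(\omega)+\cN_y(y_u,\omega))z=\cB(\omega)h$. Differentiating $J(u,\omega)=\tilde J(S(\omega)u,\omega)+\rho(u)$ by the chain rule gives $\langle D_uJ(u,\omega),h\rangle=\langle D_y\tilde J(y_u,\omega),S'(\omega)u\,h\rangle+\langle\nabla\rho(u),h\rangle$. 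Introducing $p$ via the adjoint equation \eqref{eq:adjoint_p} and using the defining relation of the adjoint operators, I rewrite the first term as $\langle D_y\tilde J(y_u,\omega),z\rangle=-\langle(\cA^*(\omega)+\cN^*_y(y_u,\omega))p,z\rangle=-\langle p,(\cA(\omega)+\cN_y(y_u,\omega))z\rangle=-\langle p,\cB(\omega)h\rangle=-\langle\cB^*(\omega)p,h\rangle$, which, after passing to Riesz representatives in the self-dual space $U=U^*$, produces exactly \eqref{eq:stoch-gradient}.

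The three bounds are then standard testing arguments. For \eqref{eq:estimate_dual_arc_1} I would test the state equation $\cA(\omega)y_u+\cN(y_u,\omega)=\cB(\omega)u+\cb(\omega)$ with $y_u$ itself; the coercivity property provided by Assumption~\ref{hp1} bounds the left side below by $C(\omega)\|y_u\|_Y^{1+\gamma}$, while Cauchy--Schwarz bounds the right side above by $(\|\cb(\omega)\|_{Y^*}+\|\cB(\omega)\|_{\LL(U,Y^*)}\|u\|_U)\|y_u\|_Y$, and dividing by $\|y_u\|_Y$ gives the claim (the case $y_u=0$ being trivial). For \eqref{eq:Lipschitz_y} I would subtract the state equations for $u_1$ and $u_2$, whose right-hand sides differ only by $\cB(\omega)(u_1-u_2)$, test with $y_{u_1}-y_{u_2}$, and apply the strong monotonicity of $\cA(\omega)+\cN(\cdot,\omega)$ with the same constant $C(\omega)$ to obtain the lower bound $C(\omega)\|y_{u_1}-y_{u_2}\|_Y^{1+\gamma}$; Cauchy--Schwarz and division then give the stated Lipschitz estimate. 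Finally, \eqref{eq:estimate_dual_arc} follows by testing the adjoint equation \eqref{eq:adjoint_p} with $p_u$ and using the coercivity of the linearized operator $\cA^*(\omega)+\cN^*_y(y_u,\omega)$ from Assumption~\ref{hp1} on the left against $\langle-D_y\tilde J(y_u,\omega),p_u\rangle\leq\|D_y\tilde J(y_u,\omega)\|_{Y^*}\|p_u\|_Y$ on the right.

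The main obstacle, and the step requiring the most care, is the gradient derivation: one must check that Assumptions~\ref{hp1}--\ref{objective-function} genuinely guarantee bounded invertibility of the linearization uniformly enough to apply the implicit function theorem pathwise and to upgrade Gâteaux to Fréchet differentiability of both $S(\omega)$ and $u\mapsto\tilde J(S(\omega)u,\omega)$, so that the chain rule and the adjoint manipulation above are rigorously justified. By comparison the energy estimates are routine, the only subtlety being the almost-sure interpretation of $C(\omega)$ and of the operator norms, which is inherited from the random-operator setting introduced in Section~\ref{sec:OCforPDEs}.
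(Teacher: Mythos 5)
Your proposal is correct and takes essentially the same route as the paper: the gradient formula \eqref{eq:stoch-gradient} is obtained by the standard adjoint approach on the reduced objective (the paper simply cites \cite[Section 1.6.2]{Hinze} where you spell out the implicit-function/chain-rule details), and the three estimates are proved exactly as you describe, by testing the state equation, the difference of two state equations, and the adjoint equation, then invoking the coercivity \eqref{coercivity} and monotonicity from Assumption~\ref{hp1}. The one detail you gloss over is that in \eqref{eq:estimate_dual_arc_1} coercivity alone only controls the $\cA$-term; the nonlinear term is dropped using monotonicity of $\cN(\cdot,\omega)$ together with $\cN(0,\omega)=0$, i.e.\ $\langle \cN(y_u(\omega),\omega)-\cN(0,\omega), y_u(\omega)\rangle_{Y^*,Y}\geq 0$, which is precisely how the paper argues (and similarly the adjoint estimate \eqref{eq:estimate_dual_arc} uses nonnegativity of $\cN_y$, inherited from monotonicity, rather than coercivity of the full linearized operator).
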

\begin{proof}
To obtain the stochastic gradient, one can argue using the adjoint approach on the reduced objective (cf.~\cite[Section 1.6.2]{Hinze}): the adjoint equation is given by 
\begin{equation*}
    (D_y e(u,y_u(\omega),\omega))^* p = -D_y \tilde{J}(y_u(\omega),\omega)
\end{equation*}
yielding \eqref{eq:adjoint_p}. The parametrized derivative of the reduced objective $J$ is given by
\begin{equation*}
    D_u J(u,\omega) = \rho'(u)+(D_u e(u,y_u(\omega),\omega))^* p = \rho'(u)-\cB^*(\omega) p
\end{equation*}
for $p$ satisfying $\eqref{eq:adjoint_p}.$
%After identifying $U=U^*$, we have
We have
\begin{equation*}
\langle \rho'(u),v \rangle_{U^*,U}=(\nabla \rho(u),v )_{U}, \quad
\langle \cB^*(\omega)p ,v \rangle_{U^*,U}= (\cB^*(\omega)p ,v )_{U}, \label{eq:stoch_grad2}
\end{equation*}
from which we obtain \eqref{eq:stoch-gradient}.
To prove \eqref{eq:estimate_dual_arc_1}, we note that $y_u$ satisfies \eqref{eq:PDE_constraint}. Using \eqref{coercivity} and $\cN(0,\omega) = 0$ yields
\begin{align*}
&C(\omega) \lVert y_u(\omega) \rVert_{Y}^{1+\gamma} + \langle \cN(y_u(\omega), \omega) - \cN(0,\omega), y_u(\omega) \rangle_{Y^*,Y} \\
&\quad \leq \langle \cb(\omega) +  \cB(\omega) u , y_u(\omega)\rangle_{Y^*,Y}\\
&\quad \leq ( \lVert \cb(\omega)\rVert_{Y^*}+\lVert \cB(\omega) u \rVert_{Y^*} )\lVert y_u(\omega)\rVert_{Y}. 
\end{align*}
Using the monotonicity of $\cN$
and making elementary manipulations we arrive at  \eqref{eq:estimate_dual_arc_1}. The estimate \eqref{eq:estimate_dual_arc} is derived in a similar way.

%The proof of \eqref{eq:Lipschitz_y} is based on standard techniques and is in line with \cite[Lemma 4.2]{GS2021}. 
To show \eqref{eq:Lipschitz_y}, note that
\begin{equation*}
    \begin{aligned}
    &C(\omega) \lVert y_{u_1}(\omega)-y_{u_2}(\omega) \rVert_{Y}^{1+\gamma} + \langle \cN(y_{u_1}(\omega), \omega) - \cN(y_{u_2}(\omega),\omega), y_{u_1}(\omega)-y_{u_1}(\omega) \rangle_{Y^*,Y} \\
&\quad \leq \langle \cB(\omega) (u_1 -u_2), y_{u_1}(\omega)-y_{u_2}(\omega)\rangle_{Y^*,Y}\\
&\quad \leq \lVert \cB(\omega) (u_1-u_2) \rVert_{Y^*} \lVert y_{u_1}(\omega)-y_{u_2}(\omega)\rVert_{Y},
    \end{aligned}
\end{equation*}
where we used the monotonicity of $\mathcal{N}$ with respect to the first argument. Routine manipulations yield \eqref{eq:Lipschitz_y}. 
\end{proof}

The next result gives sufficient conditions for the continuous Fr\'echet differentiability of the objective and justifies calling the function $G$ a stochastic gradient.
\begin{proposition}\label{prop:differentiability_j}
Let Assumptions \ref{hp1}--\ref{objective-function} hold with $\gamma=1$. Then, %$J\colon U\times \Omega \rightarrow \mathbb{R}$ and 
$j\colon U\rightarrow\mathbb{R}$ is continuously Fr\'echet differentiable.

Assume, moreover, that $u \mapsto \| \rho'(u)\|_{U^*}$ is bounded on bounded sets and that for some $\alpha >0$ we have
\begin{equation}
\label{eq:growth-condition}
\| D_y \tilde{J}(y_u(\omega),\omega)\|_{Y^*}\leq c_1(\omega) \| u \|^\alpha_{U}+\tilde{c}_1(\omega) \textit{ a.s.},
\end{equation}
where
the random variables $$c_2(\omega):= \| \mathcal{B}^* (\omega)\|_{\mathcal{L}(Y,U^*)}  C^{-1}(\omega) c_1(\omega) \text{ and }  \tilde{c}_2(\omega):= \| \mathcal{B}^* (\omega)\|_{\mathcal{L}(Y,U^*)}  C^{-1}(\omega) \tilde{c}_1(\omega)$$
%$$c_2(\omega):= \| \mathcal{B}^* (\omega)\|_{\mathcal{L}(Y,U^*)} c_1^{\frac{1}{\gamma}}(\omega)C^{-\frac{1}{\gamma}}(\omega)$$
belong to $L^1(\Omega)$. Then $\nabla j(u)=\mathbb{E}[G(u,\cdot)]$  for all $u \in U$.  
\end{proposition}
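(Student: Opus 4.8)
The plan is to derive both assertions from a single differentiation-under-the-integral argument, split so that the base assumptions (with $\gamma=1$) deliver the differentiability of $j$ while the additional growth and integrability hypotheses deliver the explicit interchange $\nabla j(u)=\E[G(u,\cdot)]$. The pointwise-in-$\omega$ ingredient is already available: by the adjoint calculus of Proposition~\ref{proposition_conditions_limit_points}, for a.e.~$\omega$ the map $u\mapsto J(u,\omega)=\tilde J(S(\omega)u,\omega)+\rho(u)$ is Fr\'echet differentiable on $U$ with gradient $G(u,\omega)=\nabla\rho(u)-\cB^*(\omega)p_u(\omega)$, $p_u(\omega)$ solving \eqref{eq:adjoint_p}. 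Continuity of $u\mapsto G(u,\omega)$ then follows from continuity of the state map (cf.~\eqref{eq:Lipschitz_y}), of the adjoint map $u\mapsto p_u(\omega)$, and of $\rho'$.

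For the first assertion I would lift the problem to the Bochner space in which the state lives. Under Assumptions~\ref{hp1}--\ref{objective-function} with $\gamma=1$, the linearization $D_y e(u,y_u(\omega),\omega)$ appearing in \eqref{eq:adjoint_p} is boundedly invertible (by the coercivity and monotonicity hypotheses), so the implicit function theorem renders the control-to-state map $u\mapsto S(\cdot)u$ continuously Fr\'echet differentiable from $U$ into $L^q(\Omega,Y)$; the averaged outer functional $z\mapsto\E[\tilde J(z(\cdot),\cdot)]$ is $C^1$ on $L^q(\Omega,Y)$ as a superposition operator, and $\rho\in C^1(U)$. The chain rule then yields $j\in C^1(U)$ with no reference to the pointwise integrability of $G$.

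For the second assertion the crux is an integrable majorant for $G(u,\cdot)$. Combining \eqref{eq:estimate_dual_arc} (with $\gamma=1$) and the growth bound \eqref{eq:growth-condition} gives, almost surely, $\|G(u,\omega)\|_U\le\|\nabla\rho(u)\|_U+c_2(\omega)\|u\|_U^\alpha+\tilde c_2(\omega)$, with $c_2,\tilde c_2\in L^1(\Omega)$. Since $\|\rho'(\cdot)\|_{U^*}$ is bounded on bounded sets, over any ball $\{\|u\|_U\le R\}$ the right-hand side is dominated by the fixed integrable function $g_R(\omega):=\sup_{\|u\|_U\le R}\|\rho'(u)\|_{U^*}+c_2(\omega)R^\alpha+\tilde c_2(\omega)$. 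In particular $G(u,\cdot)\in L^1(\Omega,U)$ (strong measurability coming from separability of $U$ together with measurability of $\omega\mapsto p_u(\omega)$ and $\omega\mapsto\cB^*(\omega)$), so $\E[G(u,\cdot)]$ is a well-defined Bochner integral. Writing the difference quotient of $j$ in a direction $v$ and applying the fundamental theorem of calculus pointwise, $t^{-1}(J(u+tv,\omega)-J(u,\omega))=\int_0^1(G(u+stv,\omega),v)_U\,\d s$, the majorant $g_R$ with $R=\|u\|_U+\|v\|_U$ dominates the integrand for $|t|\le1$; dominated convergence in $s$ and then in $\omega$ identifies the G\^ateaux derivative of $j$ at $u$ in direction $v$ with $(\E[G(u,\cdot)],v)_U$. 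As $j$ is already Fr\'echet differentiable by the first part, its derivative coincides with this G\^ateaux derivative, whence $\nabla j(u)=\E[G(u,\cdot)]$ for all $u$.

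The main obstacle is securing the uniform integrable domination: this is exactly where boundedness of $\|\rho'(\cdot)\|_{U^*}$ on bounded sets, the growth bound \eqref{eq:growth-condition}, and the $L^1$-integrability of $c_2,\tilde c_2$ are indispensable, and it is what distinguishes the explicit pointwise interchange (second assertion) from the abstract Bochner-space differentiability (first assertion). Secondary technical points I would verify are the strong (Bochner) measurability of $\omega\mapsto G(u,\omega)$, settled via separability of $U$ and measurability of the adjoint state, and the continuity of $u\mapsto p_u(\omega)$, needed both for continuity of the Fr\'echet derivative and for the validity of the pointwise fundamental-theorem-of-calculus representation along each segment.
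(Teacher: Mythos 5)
Your proposal is correct, and its skeleton matches the paper's: the first assertion is obtained exactly as in the paper, by the chain rule in Bochner spaces (IFT gives $u\mapsto y_u$ continuously Fr\'echet differentiable from $U$ into $L^q(\Omega,Y)$, composed with the $C^1$ superposition functional $F$ from Assumption~\ref{objective-function} and with $\rho$), and for the second assertion you build precisely the paper's integrable majorant, namely $\|G(v,\omega)\|_{U^*}\leq \sup_{v}\|\rho'(v)\|_{U^*}+c_2(\omega)R^\alpha+\tilde c_2(\omega)$ over a bounded neighborhood, using \eqref{eq:estimate_dual_arc} and \eqref{eq:growth-condition}. The only genuine divergence is how the interchange is concluded: the paper verifies the three hypotheses of Lemma~\ref{lemma:frechet-exchange-derivative-expectation} (well-posedness of $j$, a.s.\ Fr\'echet differentiability of $J(\cdot,\omega)$ via the pointwise IFT argument, and the uniform integrable bound $C_J$), which delivers Fr\'echet differentiability of $j$ and $\nabla j(u)=\E[G(u,\cdot)]$ in one stroke; you instead inline that lemma's content, representing the difference quotient by the fundamental theorem of calculus along segments and passing to the limit by dominated convergence, which yields only the G\^ateaux derivative, and you then invoke the first assertion to upgrade the identification to the Fr\'echet gradient. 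Both routes rest on the same two pillars (pointwise a.s.\ differentiability plus the locally uniform $L^1$ majorant), so the mathematical content is the same; your version is self-contained but slightly longer and needs one extra ingredient you correctly flag, namely a.s.\ continuity of $u\mapsto p_u(\omega)$ (hence of $u\mapsto G(u,\omega)$) along segments to justify the fundamental-theorem representation and the pointwise limit -- this follows from the coercivity of the adjoint operator together with continuity of $y\mapsto D_y\tilde J(y,\omega)$ and $y\mapsto \cN_y(y,\omega)$, without the stronger Lipschitz hypotheses of Lemma~\ref{lemma:basic_estimates}, so no gap results.
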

\begin{proof}
The continuous Fr\'echet differentiability of $j$ follows from that of $\rho$ and $F$ from Assumption~\ref{objective-function} combined with the continuous Fr\'echet differentiability of $u\mapsto y_u$ afforded by \cite[Proposition 3.9]{KS}.

To prove that the equality $\mathbb{E}[G(u,\cdot)]=\nabla j(u)$ holds true, we verify the conditions of Lemma~\ref{lemma:frechet-exchange-derivative-expectation}. That $j$ is well-defined and finite-valued follows from Assumption~\ref{objective-function}. That $J(\cdot,\omega)$ is a.s.~Fr\'echet differentiable follows from the following observation: $S(\omega)\colon U \rightarrow Y$ can be shown to be a.s.~(continuously) Fr\'echet differentiable by invoking the Implicit Function Theorem following the arguments in \cite[Proposition 3.9]{KS} for fixed $\omega$. Then the Fr\'echet differentiability of $J(\cdot,\omega)$ follows using the assumptions on $\tilde{J}$ and $\rho$ from Assumption~\ref{assumption_diff}.
Now, fix an arbitrary $u\in U$ and some bounded neighborhood $V_u\subset U$ of $u$. We observe that for a.e.~$\omega \in \Omega$ and all $v\in V_u$, by \eqref{eq:estimate_dual_arc},
\begin{align*}
 \| G&(v,\omega) \|_{U^*}  \leq \|  \rho'(v) \|_{U^*} +  \| \cB^*(\omega)\|_{\mathcal{L}(Y, U^*)} \| p_u(\omega) \|_{Y} \\% =: C_J(\omega)
 &\leq  \|  \rho'(v) \|_{U^*} +  \| \cB^*(\omega)\|_{\mathcal{L}(Y, U^*)} C^{-1}(\omega) \| D_y \tilde{J}(y_v(\omega),\omega)  \|_{Y^*} \\
 &\leq \sup_{v\in V_u} \|  \rho'(v) \|_{U^*} +  \| \cB^*(\omega)\|_{\mathcal{L}(Y, U^*)}  C^{-1}(\omega)( c_1(\omega)\sup_{v\in V_u}\|v\|_{U}^\alpha + \tilde{c}_1(\omega))
 =: C_J(\omega).
\end{align*}
The random variable $C_J$ is integrable by the assumptions of the proposition. %on account of H\"older's inequality, since
%\begin{align*}
%&\mathbb{E}[ \| \cB^*\|_{\mathcal{L}(Y, U^*)} \| p \|_{Y} ]  \leq \mathbb{E}[ \| \cB^* \|^s_{\mathcal{L}(Y,U^*)}]^{\frac{1}{s}} \mathbb{E} [ \| p \|_{Y}^{\frac{s}{s-1}}]^\frac{s-1}{s} \quad \forall u\in U.
%\end{align*}
%By \cite[Proposition 3.2]{KS}, for all $u\in U$ it holds that $p_u$ belongs to the Bochner space $L^q(\Omega,Y)$ with $q=\frac{s \gamma }{1+\frac{s}{t}}$, which is embedded in $L^{\frac{s}{s-1}}(\Omega,Y)$ since $\frac{s}{s-1}\geq q$ by definition of $s,t,$ and $q$. This, together with the integrability of $\| \cB^*(\cdot) \|_{\mathcal{L}(Y,U^*)}$ by Assumption \ref{assumption_diff}, ensures that the random variable $C_J>0$ is integrable. 
This argument can be repeated for any $u \in U$;  Lemma~\ref{lemma:frechet-exchange-derivative-expectation} yields that $\mathbb{E}[G(u,\cdot)]= \nabla j(u)$ for all $u$.  
\end{proof}
\begin{remark}
\label{rem-growth-condition}
The condition \eqref{eq:growth-condition} is reasonable if one considers that $D_y \tilde{J}(y(\cdot),\cdot)$ belonging to $L^{\frac{pq}{q-p}}(\Omega,Y)$ as required by Assumption~\ref{objective-function} can be guaranteed by the growth condition $$\| D_y \tilde{J}(y,\omega)\|_{Y^*} \leq d_1(\omega)+d_2 \| y\|_Y^{q/\tilde{p}}$$ for $\tilde{p}:=\tfrac{pq}{q-p}$ and $d_1 \in L^{\tilde{p}}(\Omega)$, $d_2 \geq 0.$ In combination with the estimate \eqref{eq:estimate_dual_arc_1}, one gets a condition of the form \eqref{eq:growth-condition}.
%We remind the reader that the integrability of $\| \cB^*(\cdot) \|_{\mathcal{L}(Y,U^*)}$ is stated in Assumption \ref{assumption_diff}. 
Integrability of $c_2$ and $\tilde{c}_2$ can be verified in applications using H\"older inequalities. 
%Hence, by using the H\"older inequality one can obtain sufficient conditions on the random variable $C$ and $c_1$ for having the integrability of $c_2$.
\end{remark}
To verify the Lipschitz gradient condition required by Assumption~\ref{asu1ii}, we will need the following notion: given $\Phi\colon Y\times \Omega \rightarrow Y^*$, we say that $\Phi(\cdot,\omega)$ is locally Lipschitz for almost all $\omega \in \Omega$ if 
%\CG{if it maps $L^\infty(D)$ to itself} 
for any $M>0$ there exists a constant $L(M,\omega)>0$ such that
\begin{equation}
\label{eq:local-lipschitz}
    \| \Phi(y,\omega)-\Phi(z,\omega) \|_{Y^*} \leq L(M,\omega)\| y-z\|_{Y} \quad \text{a.s.}
\end{equation}
for all $y,z \in L^\infty(D)$ such that $\|y\|_{L^\infty(D)}\leq M$ and $\|z \|_{L^\infty(D)} \leq M.$ Many examples of interest satisfy this assumption as demonstrated in \cite[Lemma 4.11]{tro}; recall that $H^1(D)=Y \hookrightarrow L^2(D) = L^2(D)^* \hookrightarrow Y^*=H^{-1}(D)$, so a condition of the form
\begin{equation*}
    \| \Phi(y,\omega)-\Phi(z,\omega) \|_{L^2(D)} \leq L(M,\omega) \| y-z\|_{L^2(D)}
\end{equation*}
as in \cite[Lemma 4.11]{tro} implies \eqref{eq:local-lipschitz}.
%\textcolor{red}{I generalized the old discussion to the case where $L(M)$ depends on $\omega$. do you agree?}
%Note that the condition \eqref{eq:local-lipschitz} implicitly requires the randomness from the nonlinear term to be essentially bounded. Now, we have the following result.% \textcolor{red}{If we take here $L(M,\omega)$ instead of $L(M)$, the last comment is not true. I do not understand why you restrict the discussion to $L(M)$ independent on $\omega$ at this point of the paper.}
\begin{lemma} \label{lemma:basic_estimates}
Let Assumptions \ref{hp1}--\ref{objective-function} hold. 
Assume that for almost all $\omega\in \Omega$, $y \mapsto D_y \tilde{J}(y,\omega)$ and $y\mapsto \mathcal{N}^*_y(y,\omega)$ are locally Lipschitz  \footnote{For $\mathcal{N}^*_y$, we mean the following: for all $M>0$, we have the existence of a constant $L_{\mathcal{N}^*_y}(M,\omega)$ such that $\|  \mathcal{N}^*_y(y_2,\omega) - \mathcal{N}^*_y(y_1,\omega) \|_{\mathcal{L}(Y,Y^*)} \leq L_{\mathcal{N}^*_y}(M,\omega) \| y_2-y_1 \|_Y$ for almost all $\omega \in \Omega$ and for all $y_i$ such that $\| y_i\|_{L^\infty(D)} \leq M$, $i=1, 2$.} mappings. %For any $M>0$, assume that
%\begin{itemize}
%\item the mapping  $D_y \tilde{J}\colon Y\times \Omega \rightarrow Y^*$ is locally Lipschitz
%    \item the mapping %$D_y \tilde{J}(y,\omega)=
   % $D_y \tilde{J}\colon Y\times \Omega \rightarrow Y^*$ is $L_{\tilde{J}_y}(\omega,M)$-Lipschitz continuous with respect to $y$ on the set of all $y\in Y$ such that $\|y\|_Y \leq M$, 
  %  \item the mapping %$\mathcal{N}^*_y(y,\omega)=
   % $\mathcal{N}^*_y\colon Y \times \Omega \rightarrow Y^*$ is $L_{\mathcal{N}^*_y}(\omega,M)$-Lipschitz continuous on the set of all $y\in Y$ such that $\|y\|_Y \leq M$ \footnote{Here, we mean the following: $\|  \mathcal{N}^*_y(y_2,\omega) - \mathcal{N}^*_y(y_1,\omega) \|_{\mathcal{L}(Y,Y^*)} \leq L_{\mathcal{N}^*_y}(\omega,M) \| y_2-y_1 \|_Y$ for all $y_{i}\in Y$ such that $\|y_{i}\|_Y \leq M$, $i=1, 2$. }. 
%\end{itemize}    
Then, given $M>0$ and $\omega$, we have the almost sure bound 
\begin{align}\label{eq:Lipschitz_p}
   \| p_{u_2}(\omega) - p_{u_1}(\omega) \|_{Y}^{\gamma} \leq L(M,\omega)\| y_{u_2}(\omega) - y_{u_1}(\omega) \|_{Y} 
\end{align}
for all $u_i \in U$ with states $y_{u_i}(\omega) \in L^\infty(D)$ satisfying $\| y_{u_i}(\omega)\|_{L^\infty(D)}\leq M$ for $i=1,2$, where
\begin{equation}\label{eq:cont_L}
L(M,\omega):=\left( L_{\tilde{J}_y}(M,\omega) +L_{\mathcal{N}^*_y}(M,\omega)C^{-1}(\omega) \| D_y \tilde{J}(y_{u_1}(\omega),\omega)\|_{Y^*}  \right) C^{-1}(\omega) 
\end{equation}
and $L_{\tilde{J}_y}(M,\omega)$ and $L_{\mathcal{N}^*_y}(M,\omega)$ are the Lipschitz constants for $\tilde{J}_y(\cdot,\omega)$ and $\mathcal{N}^*_y(\cdot,\omega)$, respectively.
\end{lemma}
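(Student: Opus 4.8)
The plan is to compare the two adjoint equations \eqref{eq:adjoint_p} satisfied by $p_1 := p_{u_1}(\omega)$ and $p_2 := p_{u_2}(\omega)$, exploiting the coercivity already used in Proposition~\ref{proposition_conditions_limit_points}. Writing $y_i := y_{u_i}(\omega)$ and suppressing $\omega$, the two identities $(\cA^* + \cN^*_y(y_1))p_1 = -D_y\tilde{J}(y_1)$ and $(\cA^* + \cN^*_y(y_2))p_2 = -D_y\tilde{J}(y_2)$ can be subtracted after inserting and removing $(\cA^* + \cN^*_y(y_2))p_1$, so that the operator evaluated at the common state $y_2$ acts on the difference:
\[
(\cA^* + \cN^*_y(y_2))(p_2 - p_1) = \big(D_y\tilde{J}(y_1) - D_y\tilde{J}(y_2)\big) - \big(\cN^*_y(y_2) - \cN^*_y(y_1)\big)p_1.
\]

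Next I would test this identity with $p_2 - p_1 \in Y$. The coercivity of $\cA^*$ together with the positive semidefiniteness of $\cN^*_y(y_2)$ (inherited from the monotonicity of $\cN$, whose linearization is monotone) yields the lower bound $\langle (\cA^* + \cN^*_y(y_2))v, v\rangle_{Y^*,Y} \geq C(\omega)\|v\|_Y^{1+\gamma}$, with the same random variable $C(\omega)$ from Assumption~\ref{hp1} used in \eqref{eq:estimate_dual_arc_1}--\eqref{eq:estimate_dual_arc}. Bounding the right-hand side by the duality pairing gives
\[
C(\omega)\|p_2 - p_1\|_Y^{1+\gamma} \leq \Big(\|D_y\tilde{J}(y_1) - D_y\tilde{J}(y_2)\|_{Y^*} + \|\cN^*_y(y_2) - \cN^*_y(y_1)\|_{\mathcal{L}(Y,Y^*)}\,\|p_1\|_Y\Big)\|p_2 - p_1\|_Y,
\]
and cancelling one factor of $\|p_2 - p_1\|_Y$ (the case $p_1 = p_2$ being trivial) leaves $C(\omega)\|p_2 - p_1\|_Y^{\gamma}$ on the left.

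Finally, I would invoke the local Lipschitz hypotheses in the form \eqref{eq:local-lipschitz} — admissible since $\|y_i\|_{L^\infty(D)} \leq M$ — to estimate $\|D_y\tilde{J}(y_1) - D_y\tilde{J}(y_2)\|_{Y^*} \leq L_{\tilde{J}_y}(M,\omega)\|y_2 - y_1\|_Y$ and $\|\cN^*_y(y_2) - \cN^*_y(y_1)\|_{\mathcal{L}(Y,Y^*)} \leq L_{\cN^*_y}(M,\omega)\|y_2 - y_1\|_Y$, and I would control the remaining factor $\|p_1\|_Y$ through the a priori bound \eqref{eq:estimate_dual_arc}, which contributes exactly $C^{-1}(\omega)\|D_y\tilde{J}(y_{u_1}(\omega),\omega)\|_{Y^*}$. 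Collecting these terms and dividing by $C(\omega)$ reproduces precisely the constant $L(M,\omega)$ in \eqref{eq:cont_L}. (In the setting of interest $\gamma = 1$, so the power of $\|p_1\|_Y$ matches that of the a priori bound without further comment.)

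The step I expect to be the main obstacle is justifying the coercivity of the \emph{linearized adjoint} operator $\cA^*(\omega) + \cN^*_y(y_2,\omega)$ with the uniform constant $C(\omega)$: one must argue that the monotonicity assumed for $\cN$ (used only in its nonlinear form when deriving \eqref{eq:estimate_dual_arc_1}) transfers to the Gateaux derivative, giving $\langle \cN^*_y(y_2)v,v\rangle_{Y^*,Y} = \langle \cN_y(y_2)v,v\rangle_{Y^*,Y} \geq 0$, and that passing to the adjoint $\cA^*$ preserves the coercivity of $\cA$ under the Gelfand-triple pairing. The remaining manipulations are routine and hold for a.e.\ fixed $\omega$.
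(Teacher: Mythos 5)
Your proposal is correct and follows essentially the same route as the paper's own proof: subtracting the two adjoint equations so that $\cA^*+\cN^*_y(y_{u_2})$ acts on $p_{u_2}-p_{u_1}$, testing with the difference, invoking coercivity plus the nonnegativity of $\cN_y$ (which the paper's Assumption~\ref{assumption_N_Ny-1} already guarantees, so your anticipated ``main obstacle'' is immediate), and concluding via the local Lipschitz hypotheses together with the a priori bound \eqref{eq:estimate_dual_arc}. Your parenthetical remark that the exponent on $\|p_{u_1}\|_Y$ matches \eqref{eq:cont_L} only for $\gamma=1$ is also consistent with the paper, which glosses over the same point.
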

\begin{proof}
The adjoint equation yields that
\begin{align*}
  &\langle ( \mathcal{A}^*(\omega)+  \mathcal{N}^*_y(y_{u_2}(\omega),\omega))p_{u_2}(\omega)\\
  & \quad- ( \mathcal{A}^*(\omega)+  \mathcal{N}^*_y(y_{u_1}(\omega),\omega))p_{u_1}(\omega),p_{u_2}(\omega)-p_{u_1}(\omega) \rangle_{Y^*,Y}    \\
   & = - \langle D_y \tilde{J}(y_{u_2}(\omega),\omega)- D_y \tilde{J}(y_{u_1}(\omega),\omega) , p_{u_2}(\omega)-p_{u_1}(\omega) \rangle_{Y^*,Y}  .
\end{align*}
Thus,
\begin{align*}
   &C(\omega)\| p_{u_2}(\omega)-p_{u_1}(\omega)\|_Y^{\gamma+1}\\%\langle  \mathcal{N}^*_y(y_{u_2}(\omega),\omega)(p_{u_2}(\omega)-p_1(\omega)) , p_{u_2}(\omega)-p_1(\omega) \rangle\\
   & \quad +\langle  \mathcal{N}^*_y(y_{u_2}(\omega),\omega)(p_{u_2}(\omega)-p_{u_1}(\omega)) , p_{u_2}(\omega)-p_{u_1}(\omega) \rangle_{Y^*,Y}\\
   &\quad + \langle    (\mathcal{N}^*_y(y_{u_2}(\omega),\omega)-\mathcal{N}^*_y(y_{u_1}(\omega),\omega)) p_{u_1}(\omega),p_{u_2}(\omega)-p_{u_1}(\omega) \rangle_{Y^*,Y} \\
   &\leq - \langle D_y \tilde{J}(y_{u_2}(\omega),\omega)- D_y \tilde{J}(y_{u_1}(\omega),\omega) , p_{u_2}(\omega)-p_{u_1}(\omega) \rangle_{Y^*,Y},
\end{align*}
where we used Assumption \ref{hp1}. By the nonnegativity of the operator $\mathcal{N}_y$,
\begin{align*}
   &C(\omega)\| p_{u_2}(\omega)-p_{u_1}(\omega)\|_Y^{\gamma+1}\\%\langle  \mathcal{N}^*_y(y_{u_2}(\omega),\omega)(p_{u_2}(\omega)-p_1(\omega)) , p_{u_2}(\omega)-p_1(\omega) \rangle\\
 &\quad + \langle    (\mathcal{N}^*_y(y_{u_2}(\omega),\omega)-\mathcal{N}^*_y(y_{u_1}(\omega),\omega)) p_{u_1}(\omega),p_{u_2}(\omega)-p_{u_1}(\omega) \rangle_{Y^*,Y} \\
   &\leq - \langle D_y \tilde{J}(y_{u_2}(\omega),\omega)- D_y \tilde{J}(y_{u_1}(\omega),\omega) , p_{u_2}(\omega)-p_{u_1}(\omega) \rangle_{Y^*,Y}.
\end{align*}
One can conclude by the local Lipschitz continuity of the maps $y \mapsto D_y \tilde{J}(y,\omega)$ and $y\mapsto \mathcal{N}^*_y(y,\omega)$ as well as \eqref{eq:estimate_dual_arc}. 
% The estimates \eqref{eq:Lipschitz_y} and \eqref{eq:Lipschitz_p} are derived in a similar way.
\end{proof}
In Section \ref{sec:numerics} below, we provide a model problem together with a discussion of the Lipschitz conditions required in the previous lemma. Among the other things, we show that in this example the Lipschitz condition for $\mathcal{N}_y^*(\cdot,\omega)$ can be deduced by using the following property: the states $y_u(\omega)$ are essentially bounded, namely, for almost all $\omega \in \Omega$, $\|y_{u}(\omega)\|_{L^{\infty}(D)} \leq c_{\infty}(\omega) \| u\|_{U}$ for some $c_{\infty}(\omega)$ independent of $u$. 

The following lemma proves that, under suitable assumptions, the conditions required for measurability as stated in Remark~\ref{rem:measurability} are satisfied. Moreover, we verify also Assumption \ref{asu1ii}  and \ref{assumption:well-posedness-unconstrained}.
\begin{proposition}\label{prop:2.2verified}
Let the assumptions of Lemma \ref{lemma:basic_estimates} hold and
assume $\gamma =1$. Moreover, suppose that for almost all $\omega \in \Omega$, 
\begin{equation}\label{eq:casas_L_infty_estim}
\|y_{u}(\omega)\|_{L^{\infty}(D)} \leq c_{\infty}(\omega) \| u\|_{U}
\end{equation}
for some random variable $c_{\infty}\colon \Omega \rightarrow [0,\infty)$ independent of $u$.
\begin{enumerate}
\item Then, the stochastic gradient defined by \eqref{eq:stoch-gradient} fulfills the Carath\'eodory property. 
\end{enumerate}
In addition to the above assumptions, %in Lemma \ref{lemma:basic_estimates}, 
let the assumptions in Proposition \ref{prop:differentiability_j} hold.  %Assumption \ref{hp1}-\ref{objective-function} hold. 
\begin{enumerate}
\item[\textup{2.}]  %Let $A>0$. Suppose that $\nabla \rho$ is $L_{\rho'}(A)$-Lipschitz continuous \CG{We have convex + $C^1$ + finite for $\rho$. Why do we need this? (Bonnans/Shapiro 2.107.} in the set of $u\in U$ such that $\| u\|_U \leq A$ %,
%$\| D_y \tilde{J}(y_u(\omega),\omega)\|_{Y^*}\leq c_1(\omega) \| u \|_{U}$,
%$D_y \tilde{J}(\cdot,\omega)$ is $L_{\tilde{J}_y}(\omega)$-Lipschitz continuous a.s., 
%For a fixed $u_1,$ %Suppose that the random variable
%$$c_3(\omega,M_A):=( L_{\tilde{J}_y}(\omega,M_A)+ L_{\tilde{N}_y}(\omega,M_A) C^{-1}(\omega)c_1(\omega)A) C^{-2} $$ 
%belongs to $L^{\frac{s}{s-2}}(\Omega)$, %where $M_A >0$ is such that $\|y_u(\omega)\|_Y \leq M_A$ a.s.~for any $u\in U$, $\|u\|_U \leq A$. 
For $V\subset H$ bounded, set $\beta:= \sup_{v\in V}\| v\|_U$.  Suppose that $\nabla \rho$ is Lipschitz continuous on $V$ with constant $L_{\rho'}(V).$ Assume that the random variable 
$$c_3(\omega):=\| \cB(\omega)\|^{2}_{\mathcal{L}(U,Y^*)} L( \beta c_{\infty}(\omega) ,\omega) C(\omega)^{-1}$$
belongs to $L^1(\Omega)$. Then, the gradient $\nabla j$ is %locally \CG{Is this ``locally'' correct? I think we can leave it out? Otherwise we haven't shown $j \in C_{L_{j'}}^{1,1}(V)$} 
Lipschitz continuous on $V$ with  $L_{j'}(V)=L_{\rho'}(V)  + \mathbb{E}[ c_3(\cdot)]$.%, where $L_{\rho'}(V)$ is the Lipschitz constant for $\rho'$ on $V$. %\textcolor{red}{Caroline, could you please check that the constant $L_{\rho'}(V)$ is well-defined with no extra assumption?} \CG{Now I'm not convinced that such a constant exists on $V$; I have added an assumption above. }
\item[\textup{3.}] Suppose that
$c_2$ and $\tilde{c}_2$ from Proposition \ref{prop:differentiability_j} belong to $L^{2}(\Omega)$.  Then, the stochastic gradient defined by \eqref{eq:stoch-gradient} fulfills Assumption~\ref{assumption:well-posedness-unconstrained}.
\end{enumerate}
\end{proposition}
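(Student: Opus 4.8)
The plan is to treat the three claims separately, in each case reducing everything to the pointwise-in-$\omega$ estimates already established for the state and adjoint equations. For claim~1, I would verify the two halves of the Carath\'eodory property from Remark~\ref{rem:measurability}. Measurability of $\omega \mapsto G(u,\omega)$ for fixed $u$ follows from the uniform measurability of the random operators $\cA,\cN,\cB$: the control-to-state map produces a strongly measurable state $y_u\in L^q(\Omega,Y)$ in the framework we borrow, the adjoint $p_u$ solves a random \emph{linear} equation with measurable data and is therefore itself measurable, so $\mathcal{B}^*(\omega)p_u(\omega)$ is measurable while $\nabla\rho(u)$ is deterministic. For continuity of $u\mapsto G(u,\omega)$ I would take $u_n\to u$ inside a bounded neighborhood and use \eqref{eq:Lipschitz_y} to obtain $y_{u_n}(\omega)\to y_u(\omega)$ in $Y$. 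The role of the new hypothesis \eqref{eq:casas_L_infty_estim} is precisely to bound $\|y_{u_n}(\omega)\|_{L^\infty(D)}\le c_\infty(\omega)\|u_n\|_U$ uniformly, so that Lemma~\ref{lemma:basic_estimates} applies with a fixed $M$ and gives $\|p_{u_n}(\omega)-p_u(\omega)\|_Y\le L(M,\omega)\|y_{u_n}(\omega)-y_u(\omega)\|_Y\to 0$; continuity of $\nabla\rho$ and boundedness of $\mathcal{B}^*(\omega)$ then yield $G(u_n,\omega)\to G(u,\omega)$.

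For claim~2, I would estimate $\nabla j(u_1)-\nabla j(u_2)=\E[G(u_1,\cdot)-G(u_2,\cdot)]$, using $\nabla j=\E[G(u,\cdot)]$ from Proposition~\ref{prop:differentiability_j}, working pointwise in $\omega$. Writing $G(u_1,\omega)-G(u_2,\omega)=\nabla\rho(u_1)-\nabla\rho(u_2)-\mathcal{B}^*(\omega)(p_{u_1}(\omega)-p_{u_2}(\omega))$, the first term is controlled by $L_{\rho'}(V)\|u_1-u_2\|_U$. For the second term I would chain Lemma~\ref{lemma:basic_estimates} (with $M=\beta c_\infty(\omega)$, legitimate by \eqref{eq:casas_L_infty_estim} since $\|u_i\|_U\le\beta$) with \eqref{eq:Lipschitz_y}, and use the identity $\|\mathcal{B}^*(\omega)\|_{\mathcal{L}(Y,U^*)}=\|\mathcal{B}(\omega)\|_{\mathcal{L}(U,Y^*)}$ to bound the second term by $c_3(\omega)\|u_1-u_2\|_U$. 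Taking expectations and invoking $c_3\in L^1(\Omega)$ delivers the stated Lipschitz constant $L_{j'}(V)=L_{\rho'}(V)+\E[c_3(\cdot)]$.

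For claim~3, I would bound the second moment directly: from \eqref{eq:stoch-gradient}, \eqref{eq:estimate_dual_arc} with $\gamma=1$, and the growth condition \eqref{eq:growth-condition}, one gets the pointwise bound $\|G(u,\omega)\|_U\le\|\nabla\rho(u)\|_U+c_2(\omega)\|u\|_U^\alpha+\tilde{c}_2(\omega)$. Squaring via $(a+b+c)^2\le 3(a^2+b^2+c^2)$ and taking expectations yields $\E[\|G(u,\cdot)\|_U^2]\le 3(\|\nabla\rho(u)\|_U^2+\E[c_2^2]\|u\|_U^{2\alpha}+\E[\tilde{c}_2^2])=:M(u)$, which is finite since $c_2,\tilde{c}_2\in L^2(\Omega)$ and bounded on bounded sets because $u\mapsto\|\rho'(u)\|_{U^*}$ is; this is Assumption~\ref{assumption:well-posedness-unconstrained}. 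I expect the main obstacle to sit in claim~1, where the two parts of the Carath\'eodory condition pull in opposite directions: the delicate point is that the \emph{same} realization-dependent constant $L(M,\omega)$ from Lemma~\ref{lemma:basic_estimates} must be usable uniformly along a convergent sequence, which is exactly what the essential-boundedness estimate \eqref{eq:casas_L_infty_estim} secures, since without it the local Lipschitz hypotheses on $D_y\tilde{J}$ and $\mathcal{N}^*_y$ could not be invoked. The remaining arithmetic in claims~2 and~3 is routine once the pointwise-in-$\omega$ estimates are assembled.
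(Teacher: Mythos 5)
Your proposal is correct and follows essentially the same route as the paper's proof: continuity of $u\mapsto G(u,\omega)$ via the chain \eqref{eq:casas_L_infty_estim}--\eqref{eq:Lipschitz_p}--\eqref{eq:Lipschitz_y}, measurability of $\omega\mapsto G(u,\omega)$ from strong measurability of $p_u$ together with uniform measurability of $\cB^*$, the pointwise Lipschitz estimate plus Jensen's inequality for claim 2, and the squared growth bound built from \eqref{eq:estimate_dual_arc} and \eqref{eq:growth-condition} for claim 3. The only cosmetic differences are that the paper phrases the continuity argument as a local Lipschitz estimate on a $\delta$-neighborhood of $u_1$ rather than along a convergent sequence, and in claim 3 it splits the square as $2(a^2+b^2)$ applied twice instead of your $3(a^2+b^2+c^2)$, which changes nothing of substance.
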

\begin{proof}
First, we prove the continuity of $u \mapsto G(u,\omega)$ on $U$ for almost all $\omega \in \Omega$.
To that end, we fix $\omega$, $u_1$, and $\delta >0$ and construct a bound $M=M(\omega)$ so that $\|y_{u_2}\|_{L^\infty(D)}\leq M$ for every $u_2$ in a $\delta$-neighborhood of $u_1.$
%For almost all $\omega \in \Omega$ we claim the continuity of $u\mapsto G(u,\omega)$ on $U$. Consider arbitrary $\omega \in \Omega$, and $u_1 \in U$, and $\delta>0$. %Take $u_2$ in a $\delta$-neighborhood of $u_1$, namely, $\| u_2 - u_1 \|_U \leq \delta$.
By assumption \eqref{eq:casas_L_infty_estim}, it holds $$\| y_{u_2}(\omega)\|_{L^{\infty}(D)}\leq c_{\infty}(\omega)\| u_2\|_{U} \leq c_{\infty}(\omega)( \delta + \| u_1 \|_U):=M=M(\omega)$$ for every $u_2\in U$ such that $\| u_2 - u_1 \|_U \leq \delta$ and for almost all fixed $\omega$. %Therefore, for almost all $\omega \in \Omega$, $\| y_{u_2}(\omega)\|_{L^{\infty}(D)}$ is bounded by a suitable $M=M(\omega)>0$ for every $u_2$ so that $\| u_2 - u_1 \|_U \leq \delta$.
%\CG{Add: We fix $\omega$??} Now, for any $M$, we have for any $u_1, u_2$ such that the corresponding states $y_{u_i}(\omega)$ satisfy $\|y_{u_i}(\omega)\|_{L^\infty(D)} \leq M$, $i=1,2$ the following estimate:  %
Now, we make use of the identity $\|\cB(\omega)\|_{\mathcal{L}(U,Y^*)} = \|\cB^*(\omega)\|_{\mathcal{L}(Y,U^*)}$ and \eqref{eq:Lipschitz_p} followed by \eqref{eq:Lipschitz_y}:
\begin{align}
&\| G(u_1,\omega) - G(u_2,\omega)\|_{U^*} \label{eq:est_G_carat} \\
&\quad\leq \| \rho’(u_1) - \rho’(u_2)\|_{U^*} + \| \cB^*(\omega)(p_{u_1}(\omega)-p_{u_2}(\omega))\|_{U^*}  \nonumber \\
& \quad\leq \|\rho’(u_1) - \rho’(u_2)\|_{U^*} + \lVert \mathcal{B}(\omega)\rVert_{\mathcal{L}(U,Y^*)}L(M,\omega) \|y_{u_1}(\omega)-y_{u_2}(\omega)\|_Y \nonumber \\
& \quad\leq \|\rho’(u_1) - \rho’(u_2)\|_{U^*} + \lVert \mathcal{B}(\omega)\rVert_{\mathcal{L}(U,Y^*)}^2 L(M,\omega) C^{-1}(\omega) \|u_1-u_2\|_{U} \nonumber
\end{align}
for every $u_2$ in a $\delta$-neighborhood of $u_1$.
%\CG{In the above estimate, should $L(M)$ depend on $\omega$?} 
This yields the continuity of $u\mapsto G(u,\omega)$ on $U$ for almost all fixed $\omega \in \Omega$. %\textcolor{red}{Caroline, in the definition of caratheodory function at page 9 I will check if it is ok to assume continuity of $G(\cdot,\omega)$ for a.a. $\omega$ instead of for every.} \CG{I have changed it; feel free to double-check, though.}
%\CG{The argument is still unclear to me: before the estimate \eqref{eq:est_G_carat} an arbitrary $M$ is chosen as well as $u_1, u_2$. But after the estimate, these points depend on $\delta$, and we write that a suitable $M$ is found. Can you please explain?}

%\CG{Some questions: 1) In the sentence starting with ``by assumption,'' should the $y_{u}$ actually be $y_{u_2}$? Same question for the sentence starting ``Thus, for...'' 2) Would the argument about boundedness of $y_{u_2}$ better before \eqref{eq:est_G_carat} where $M$ is introduced and what is the $M$ there? It seems like it should depend on $\omega.$ 3) I don't see how you get (uniform) boundedness for almost all $\omega$ since $c_\infty \in L^\infty(\Omega)$ was not assumed.}
%\CG{This is only local: we do not obtain a Carath\'eodory function.} \textcolor{red}{TS: we can conclude $\| G(u_n,\omega)-G(u,\omega)\|_{U^*} \rightarrow 0$ if $\| u_n-u \|_U \rightarrow 0$ as $n\rightarrow \infty$ for fixed $\omega$. In this paper, I read that $G(\cdot,\omega)$ needs to be continuous for fixed $\omega$ to be caratheodory.}
Now, we use the fact that $G(u,\omega)=\nabla \rho(u) + \mathcal{B}^*(\omega)p_u(\omega)$ together with $p_u \in L^q(\Omega,Y)$ by \cite[Proposition 3.2]{KS} and $\cB \in L^s(\Omega, \mathcal{L}(U,Y^*))$ by Assumption~\ref{assumption_N_Ny}. In particular, $\omega \mapsto p_u(\omega)$ is strongly measurable for every $u$ and $\cB^*$ is a uniformly measurable random operator. Their product is therefore measurable, making $\omega \mapsto G(u,\omega)$ measurable for all $u$. %\CG{Can you check this? I modified the last sentence.}%as well as their product with the other random variables. 
%Now, $c_3(M,\omega)$ belonging to $L^1(\Omega)$ implies that $\omega \mapsto c_3(M,\omega)$ is measurable, and with that, $\omega \mapsto G(u,\omega)$. \textcolor{red}{I do not like this now because M depends on the bound of $\| y_{u}(\omega)\|_{L^{\infty}}$ and, in turn, of $\omega$. Can we instead argue measurability by (3.2) in Proposition \ref{proposition_conditions_limit_points}?}
%\CG{Yes it's fine by me; you can probably uncomment our original argument, maybe it still works}

Let us prove claim 2. 
%Note that the continuous Fr\'echet differentiability combined with convexity implies that $\rho$ is a locally Lipschitz function. Set $\beta:= \sup_{v\in V}\| v\|_U$.
Taking $v \in V$, then by assumption, for almost all $\omega\in \Omega$, we have that $\| y_{v}(\omega)\|_{L^{\infty}(D)}\leq c_{\infty}(\omega)  \, \beta$ for all states $y_v(\omega)$ corresponding to a control $v\in V$.
%\CG{I don't understand how you are able to bound by the diameter here? What if your diameter is some small $\epsilon$ but the norm of $u$ is very large?}
%Bonnans/Shapiro 2.107 
Using calculations similar to \eqref{eq:est_G_carat} and Jensen's inequality, the claim follows from
\begin{equation*}
\label{eq:Lip_const_nablaj}
\begin{aligned}
\| \nabla j(u_1)-\nabla j(u_2) \|_{U} %&= \| j'(u_1)-j'(u_2) \|_{U^*}\\
&\leq \mathbb{E}[  \| G(u_1,\cdot)-G(u_2,\cdot)\|_{U^*} ] \\
%&\leq \| \nabla \rho( u_1)- \nabla \rho(u_2) \|_{U} +\mathbb{E}[\| \cB^*   (p_{u_1}- p_{u_2}) \|_{U^*}]\\
%& \leq (L_{\rho'} (M_A) + \mathbb{E}[ ( L_{\tilde{J}_y}(\omega,M_A)+ L_{\mathcal{N}^*_y}(\omega,M_A) \|p_{u_1} \|_Y) C^{-2}  \| \cB^*\|_{\mathcal{L}(Y,U^*)}^2 ])\| u_1 - u_2 \|_{U}\\
& \leq (L_{\rho'}(V)+ \E[c_3( \cdot)] )\| u_1-u_2\|_U,
%\leq (L_{\rho'}(M_A)  + \mathbb{E}[ c_3(\omega,M_A)  \| \cB^*\|_{\mathcal{L}(Y,U^*)}^2 ])\| u_1 - u_2 \|_{U},
\end{aligned}
\end{equation*}
together with the integrability of the random variable $c_3$.
%where we have utilized the Jensen’s inequality and the definition of $c_3.$
%, \eqref{eq:Lipschitz_y}, \eqref{eq:estimate_dual_arc_1}, \eqref{eq:Lipschitz_p} and the definition of $c_3$.
%Applying H\"older's inequality and using the regularity from Assumption~\ref{assumption_diff} yields the claim. 
For the third claim, we utilize the estimates \eqref{eq:estimate_dual_arc} and \eqref{eq:growth-condition} to obtain that
\begin{equation}
\label{eq:bound-on-gradient-example}
\begin{aligned}
    \| G(u,\omega)\|_{U^*} &\leq \| \rho'(u) \|_{U^*} +  \| \cB(\omega) \|_{\mathcal{L}(U,Y^*)} \| p_u(\omega) \|_{U}\\
    &\leq \| \rho'(u) \|_{U^*} +  \| \cB(\omega) \|_{\mathcal{L}(U,Y^*)} C^{-1}(\omega) \| D_y \tilde{J}(y_u(\omega),\omega)\|_{Y^*} \\
     &\leq \| \rho'(u) \|_{U^*} +  \| \cB(\omega) \|_{\mathcal{L}(U,Y^*)} C^{-1}(\omega) (c_1(\omega) \| u \|^\alpha_{U}+\tilde{c}_1(\omega) )\\
     &= \| \rho'(u) \|_{U^*} + c_2(\omega) \|u\|_U^\alpha + \tilde{c}_2(\omega).
\end{aligned}
\end{equation}
Therefore
\begin{align*}
\mathbb{E}[ \| G(u,\cdot)\|_{U^*}^2] &\leq 2 \| \rho'(u) \|_{U^*}^2 + 2\E[ (c_2\|u\|_U^{\alpha} + \tilde{c}_2)^2] \\
&\leq 2 \| \rho'(u) \|_{U^*}^2 + 4\E[ c_2^2]\|u\|_U^{2\alpha}+4\E[\tilde{c}_2^2] =:M(u).
\end{align*}

Using the square integrability of $c_2$ and $\tilde{c}_2$ as well as the local boundedness property for $\rho'$ from Proposition \ref{prop:differentiability_j}, it follows that $u \mapsto M(u)$ is bounded on bounded sets as required.
\end{proof}
%\textcolor{red}{New remark below. Please tell me if you agree especially with the style. I remember we briefly discussed by email the topics. I preferred not to assume $y\in L^{\infty}(\Omega,L^{\infy}(D))$ but I make a comment on it below. }
%\CG{I agree with the style. I'm not sure we need to cite the example in \cite[Section 5]{KS}. Note that it is enough to require $C^{-1} \in L^\infty(\Omega)$, $\mathfrak{b}\in L^\infty(\Omega,Y^*)$ and $\mathcal{B} \in L^\infty(\Omega,\mathcal{L}(U,Y^*))$ on account of \eqref{eq:estimate_dual_arc_1}. Therefore the regularity of $\mathcal{N}_y$ should not play a role in the regularity of $y.$}
\begin{remark}
\label{rem-Prop35}
Here are some comments on the above proposition.  First, using Lemma~\ref{lem:sufficient-conditions-boundedness}, it is possible to prove that the sequence of iterates produced by Algorithm~\ref{alg:PSG_Hilbert_Nonconvex_Unconstrained} are almost surely contained in a bounded set when considering the tracking-type functional and Tikhonov regularization in \eqref{eq:example_problem} with $\lambda$ sufficiently large. %For simplicity, we assume here that $U=L^2(D)$,  $y_u(\omega)\in L^q(\Omega, Y)$ for $q$ arbitrary large and $Y=H^1(D)$, and we refer to Section \ref{sec:numerics} for a model where this is satisfied. Recall that
For the growth conditions on the second to fourth moments of the stochastic gradient, we see that
\begin{align*}
\lVert G(u,\omega)\rVert_U^k &\leq  (\lambda \lVert u \rVert_U + c_2(\omega) \lVert u \rVert_{U} + \tilde{c}_2(\omega))^k\\
& \leq ((\lambda + c_2(\omega) )\lVert u \rVert_{U} + \tilde{c}_2(\omega))^k \leq 2^{k-1}( (\lambda + c_2(\omega) )^k\lVert u \rVert_{U}^k + \tilde{c}_2(\omega)^k) .
\end{align*}
For $\lVert u \rVert_U \geq 1$, we have
\begin{align*}
(\nabla j(u),u)_U &= (\E[G(u,\cdot)],u)_U = (\lambda u- \E[\mathcal{B}^* p_u],u)_U\\
& \geq \lambda \lVert u \rVert_U^2  -  \E[c_2]  \lVert u\rVert_U^2 - \E[\tilde{c}_2] \lVert u\rVert_U\\
& \geq \lambda \lVert u \rVert_U^2 - \hat{c}_2 \lVert u\rVert_U^2
\end{align*}
for some $\hat{c}_2>0.$ %Rearranging, we have $(\nabla %j(u),u)_U >0 $. 
Condition \eqref{eq:angle-condition} is fullfilled if $\lambda -\hat{c}_2\geq0$
and this certainly is the case for $\lambda$ sufficiently large. On the other hand, we observe that $\hat{c}_2$ comes from rather coarse estimates. Moreover, numerical experiments suggest that the above condition for $\lambda$ is not the optimal one (see Section \ref{sec:numerics}). One may naturally check the boundedness of $u_n$ numerically, but further sufficient conditions are needed, since the literature provided so far requires assumptions that appear to be unnecessarily  strong.
%$\lVert u \rVert_U \geq \gamma$ for $\gamma$ 
%sufficiently large. 

%\CG{@TS: The last inequality above only works for the case $\lVert u \rVert_{U} \leq 1.$ Otherwise, we have growth that is slightly faster than the moment condition. If this argument doesn't work out, we might consider the case $\beta = 2$ and require $\lambda > \hat{c}_2.$ The only trouble there is that $\hat{c}_2$ comes from rather coarse estimates.}

%Indeed, starting from the inequality \eqref{eq:bound-on-gradient-example}, we see that $\alpha=1$ and as long as $\beta:=c_2 \in L^\infty(\Omega)$ and $X:=\tilde{c}_2 \in L^2(\Omega)$, the growth condition in Lemma~\ref{lem:sufficient-conditions-boundedness} is satisfied.

Next, in Assumption~\ref{asu1ii} we required that $j\in C_L^{1,1}(V)$ for some open set $V$ with $U \subset V$, where $U$ is a bounded set of iterates. Thus, we can restrict $V$ to be bounded in claim 2. Second, in several applications (as in, e.g.,~Section \ref{sec:numerics}) the term $\mathcal{N}_y$ might be independent of $\omega$. If one can additionally prove that the states $y_u$ belong to $L^{\infty}(\Omega, L^{\infty}(D))$, which would follow from an estimate of the form \eqref{eq:estimate_dual_arc_1} along with suitable integrability assumptions on $C^{-1}$, $\mathfrak{b}$, and $\mathcal{B}$, then it is then clear that the term $L$ appearing in the definition of $c_3$ is deterministic. This simplifies the study of the integrability of the random variable $c_3$. However, proving that $y_u(\omega)$ belongs to $L^{\infty}(D)$ for almost all $\omega \in \Omega$ and the corresponding estimate \eqref{eq:casas_L_infty_estim} might require in general delicate analysis, as observed in \cite[Section 4.2.3]{tro} for the case of deterministic semilinear problems. 
%Indeed, starting from the inequality \eqref{eq:bound-on-gradient-example}, we see that $\alpha=1$ and as long as $\beta:=c_2 \in L^\infty(\Omega)$ and $X:=\tilde{c}_2 \in L^2(\Omega)$, the growth condition in Lemma~\ref{lem:sufficient-conditions-boundedness} is satisfied.

Next, in Assumption~\ref{asu1ii} we required that $j\in C_L^{1,1}(V)$ for some open set $V$ with $U \subset V$, where $U$ is a bounded set of iterates. Thus, we can restrict $V$ to be bounded in claim 2. Second, in several applications (as in, e.g.,~Section \ref{sec:numerics}) the term $\mathcal{N}_y$ might be independent of $\omega$. If one can additionally prove that the states $y_u$ belong to $L^{\infty}(\Omega, L^{\infty}(D))$, which would follow from an estimate of the form \eqref{eq:estimate_dual_arc_1} along with suitable integrability assumptions on $C^{-1}$, $\mathfrak{b}$, and $\mathcal{B}$, then it is then clear that the term $L$ appearing in the definition of $c_3$ is deterministic. This simplifies the study of the integrability of the random variable $c_3$. However, proving that $y_u(\omega)$ belongs to $L^{\infty}(D)$ for almost all $\omega \in \Omega$ and the corresponding estimate \eqref{eq:casas_L_infty_estim} might require in general delicate analysis, as observed in \cite[Section 4.2.3]{tro} for the case of deterministic semilinear problems. 
\end{remark}

%Recall that in Assumption~\ref{asu1ii}, we required that $j\in C_L^{1,1}(V)$ for some open set $V$ containing the (bounded) set of iterates. \CG{To finish...}
Finally, we discuss the assumptions needed for the convergence of limit points of Algorithm \ref{alg:PSG_Hilbert_Nonconvex_Unconstrained}, namely, for the claims 1 and 2 of Theorem \ref{theorem:convergence-algorithm1}. 
\begin{proposition}\label{proposition_wls_PDEconstraint}
Let all the assumptions in Proposition \ref{proposition_conditions_limit_points} hold true.
%Let Assumption \ref{hp1}-\ref{objective-function} hold. 
If  $u \mapsto \nabla \rho(u)$ is weakly sequentially continuous,  then $u \mapsto \| \nabla j(u) \|_{U}$ is weakly lower semicontinuous.
\end{proposition}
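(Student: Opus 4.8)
The plan is to exploit the decomposition of the gradient afforded by \eqref{eq:stoch-gradient} and Proposition~\ref{prop:differentiability_j}, namely
\[
\nabla j(u) = \E[G(u,\cdot)] = \nabla \rho(u) - \E[\cB^*(\cdot)\,p_u(\cdot)],
\]
and to show that the two summands behave differently under weak convergence. The term $u \mapsto \nabla \rho(u)$ is weakly sequentially continuous by hypothesis, whereas the adjoint term $T(u):=\E[\cB^*(\cdot)\,p_u(\cdot)]$ is \emph{completely} continuous, meaning $u_n \rightharpoonup \bar u$ in $U$ implies $T(u_n) \to T(\bar u)$ strongly in $U$. Granting this, for $u_n \rightharpoonup \bar u$ we get $\nabla \rho(u_n) \rightharpoonup \nabla \rho(\bar u)$ and $T(u_n) \to T(\bar u)$, so $\nabla j(u_n) = \nabla \rho(u_n) - T(u_n) \rightharpoonup \nabla j(\bar u)$, since adding a strongly convergent perturbation to a weakly convergent sequence preserves weak convergence. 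The weak lower semicontinuity of the Hilbert-space norm then gives $\|\nabla j(\bar u)\| \leq \liminf_n \|\nabla j(u_n)\|$, which is exactly the assertion.

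The heart of the matter is thus the complete continuity of $T$, which I would establish in two steps for a.e.\ fixed $\omega$. First, the control-to-state map $u\mapsto y_u(\omega)$ is completely continuous. Testing the difference of the two state equations with $y_{u_n}(\omega)-y_{\bar u}(\omega)$ and using the coercivity of Assumption~\ref{hp1} together with the monotonicity of $\cN$, exactly as in the derivation of \eqref{eq:Lipschitz_y}, yields the sharper estimate
\begin{equation*}
\| y_{u_n}(\omega) - y_{\bar u}(\omega) \|_Y^{\gamma} \leq C^{-1}(\omega)\,\| \cB(\omega)(u_n - \bar u) \|_{Y^*}.
\end{equation*}
Because the embedding $Y \hookrightarrow\hookrightarrow U$ is compact (as in the model case $H^1(D)\hookrightarrow\hookrightarrow L^2(D)$), the dual embedding $U \hookrightarrow\hookrightarrow Y^*$ is compact as well, so that $\cB(\omega)\in\mathcal{L}(U,Y^*)$ is completely continuous; hence $\|\cB(\omega)(u_n-\bar u)\|_{Y^*}\to 0$ and $y_{u_n}(\omega)\to y_{\bar u}(\omega)$ in $Y$. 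Second, the continuous dependence of the adjoint state on $y$ through the adjoint equation \eqref{eq:adjoint_p}, whose data $\cN^*_y(\cdot,\omega)$ and $D_y\tilde{J}(\cdot,\omega)$ depend continuously on $y$ (the mechanism of Lemma~\ref{lemma:basic_estimates}), propagates this to $p_{u_n}(\omega)\to p_{\bar u}(\omega)$ in $Y$, whence $\cB^*(\omega)p_{u_n}(\omega)\to \cB^*(\omega)p_{\bar u}(\omega)$ in $U$.

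It remains to pass to the expectation. The weakly convergent sequence $\{u_n\}$ is bounded, so using \eqref{eq:estimate_dual_arc} together with the growth condition \eqref{eq:growth-condition} one obtains an integrable majorant for $\|\cB^*(\omega)p_{u_n}(\omega)\|_U$ uniform in $n$ (this is the integrability already exploited in Proposition~\ref{prop:differentiability_j}). The pointwise convergence above combined with this majorant allows the dominated convergence theorem to conclude $T(u_n)=\E[\cB^*(\cdot)p_{u_n}(\cdot)] \to \E[\cB^*(\cdot)p_{\bar u}(\cdot)] = T(\bar u)$ strongly in $U$, closing the argument.

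I expect the principal obstacle to be the complete continuity of $\cB(\omega)$, equivalently the strong convergence of the adjoint term: this is precisely where the compactness of the Gelfand-triple embedding is indispensable, and it is the single structural feature that rescues weak lower semicontinuity of the nonconvex map $u\mapsto\|\nabla j(u)\|$. A secondary technical point is guaranteeing that the pointwise-in-$\omega$ convergence of the adjoint states is uniformly dominated, so that the expectation may be exchanged with the limit; this rests on the same integrability used earlier to justify $\nabla j(u)=\E[G(u,\cdot)]$.
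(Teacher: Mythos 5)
Your proposal has the same skeleton as the paper's proof: decompose $\nabla j(u)=\nabla\rho(u)-\E[\cB^* p_u]$, show the adjoint term is completely continuous, note that a weakly convergent sequence plus a strongly convergent perturbation is weakly convergent, and finish with weak lower semicontinuity of the norm. The gap lies in how you establish the complete continuity of $T(u)=\E[\cB^*(\cdot)p_u(\cdot)]$. Your pointwise-in-$\omega$ argument imports hypotheses that are \emph{not} among the assumptions of Proposition~\ref{proposition_wls_PDEconstraint}: the proposition only assumes the hypotheses of Proposition~\ref{proposition_conditions_limit_points} (i.e.\ Assumptions~\ref{hp1}--\ref{objective-function}) plus weak sequential continuity of $\nabla\rho$. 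In particular, (i) the propagation $y_{u_n}(\omega)\to y_{\bar u}(\omega)$ $\Rightarrow$ $p_{u_n}(\omega)\to p_{\bar u}(\omega)$ via ``the mechanism of Lemma~\ref{lemma:basic_estimates}'' requires the local Lipschitz continuity of $\cN_y^*(\cdot,\omega)$ and $D_y\tilde J(\cdot,\omega)$ together with uniform $L^\infty(D)$ bounds on the states, none of which are assumed here (this step could be repaired using mere continuity of $y\mapsto\cN_y(y,\omega)$ and $y\mapsto D_y\tilde J(y,\omega)$ from Assumptions~\ref{assumption_N_Ny} and \ref{objective-function}); and (ii), more seriously, your dominated-convergence step needs a uniform-in-$n$ integrable majorant for $\|\cB^*(\omega)p_{u_n}(\omega)\|_U$, which you obtain from the growth condition \eqref{eq:growth-condition} and the integrability of $c_2,\tilde c_2$ --- these are assumptions of Proposition~\ref{prop:differentiability_j}, not of the statement being proved. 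A minor additional flaw: you derive complete continuity of $\cB(\omega)$ from compactness of the embedding $Y\hookrightarrow U$; this inference is not valid for a general operator in $\mathcal{L}(U,Y^*)$ (it only works when $\cB(\omega)$ factors through the embedding), but it is also unnecessary, since complete continuity of $\cB(\omega)$ is directly part of Assumption~\ref{hp1}.

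The paper avoids both issues by working at the level of Bochner spaces rather than pointwise in $\omega$: it cites \cite[Proposition 3.3]{KS}, which states that $u\mapsto p_u$ is completely continuous from $U$ into $L^q(\Omega,Y)$ under exactly the standing assumptions, and \cite[Lemma 2.1]{KS}, which gives boundedness of $\cB^*$ from $L^q(\Omega,Y)$ into $L^p(\Omega,U^*)$. Then $u_k\rightharpoonup\bar u$ yields $\cB^*p_{u_k}\to\cB^*p_{\bar u}$ in $L^p(\Omega,U^*)$, hence $\E[\cB^*p_{u_k}]\to\E[\cB^*p_{\bar u}]$ strongly, with no need for a pointwise limit or an integrable majorant --- the uniform integrability you must supply by hand is already packaged inside the $L^q$-level complete continuity. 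If you want a self-contained proof along your lines, you must either add the hypotheses of Lemma~\ref{lemma:basic_estimates} and Proposition~\ref{prop:differentiability_j} to the statement, or replace the dominated-convergence step with an argument (e.g.\ Vitali's theorem with uniform integrability extracted from Assumption~\ref{objective-function} and \eqref{eq:estimate_dual_arc}) that stays within the stated assumptions; as written, the proof does not prove the proposition as stated.
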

\begin{proof}
We need to verify that
\begin{equation*}\label{eq:wls_nablaj}
u_k \rightharpoonup \bar u  \mbox{ in }U \quad \Rightarrow \quad  \liminf_{k\rightarrow  \infty} \| \nabla j(u_k) \|_{U}\geq \| \nabla j(\bar{u}) \|_{U}.
\end{equation*}
Thanks to Proposition \ref{proposition_conditions_limit_points}, we have $\nabla j(u)= \nabla \rho(u)- \mathbb{E}[ \mathcal{B}^* p_u]$. Furthermore, the mapping $u \mapsto p_u$ is completely continuous from $U$ to $L^q(\Omega,Y)$ as stated in \cite[Proposition 3.3]{KS}. Hence $u_k \rightharpoonup \bar{u}$ in $U$ implies $p_{u_k} \rightarrow p_{\bar{u}}$ in $L^q(\Omega,Y)$. Moreover, $\cB^*$ as a map from $L^q(\Omega,Y)$ to $L^p(\Omega, U^*)$ is bounded according to  \cite[Lemma 2.1]{KS}. In particular, $ \| \cB^* p_{u_k}- \cB^* p_{\bar{u}}\|_{L^p(\Omega,U^*)}\rightarrow 0$. 
    This implies that $\mathbb{E}[\cB^* p_{u_k}] \rightarrow \mathbb{E}[\cB^* p_{\bar{u}}] $ as $k$ goes to infinity.
Recall that $\nabla \rho$ is weakly sequentially continuous.
Then, weak lower semicontinuity of the norm, i.e., 
$$ \eta_n \rightharpoonup  \eta \mbox{ in } U \quad \Rightarrow \quad \|\eta \|_{U} \leq  \liminf_{n \rightarrow \infty} \| \eta_n \|_{U}$$
can be applied to the sequence $\eta_n= \nabla \rho(u_n)- \mathbb{E}[ \cB^* p_{u_n}]$ and the weak limit point $\bar{\eta}=\nabla \rho(\bar{u})- \mathbb{E}[( \cB^* p_{\bar{u}})]$ to yield the conclusion of the proposition.
\end{proof}
Let us briefly comment on Proposition \ref{proposition_wls_PDEconstraint}.
Assuming weak sequential continuity of $ u \mapsto  \nabla \rho(u)$ in $U$ is rather harmless and it is trivially satisfied, for instance, in the case where $\rho$ is a regularizing term as in the basic problem \eqref{eq:example_problem}. 
\begin{remark}
\label{rem-bounded-iterates}
In this paper, the control set is unbounded and we still can guarantee the existence of an optimal control when $\lambda>0$.  Indeed, taking $\lambda>0$  in \eqref{eq:example_problem}, $j$ is (radially) coercive in the sense that
\begin{equation}\label{eq:corcivity_lambda}
    j(u)\geq \frac{\lambda}{2} \| u\|_{L^2(D)}^2 \rightarrow \infty \quad \text{ as } \quad \|u \|_{L^2(D)}\rightarrow \infty.
\end{equation}
Thus, a minimizing sequence is bounded and, in turn, an optimal solution exists by the weakly lower semicontinuity of the map $u \mapsto \rho(u)$ and the weak continuity of the map $u\mapsto \mathbb{E}[\tilde{J}(S(\cdot)u, \cdot)]$ (see e.g. \cite[Prop. 4.2]{KS} and reference therein).
\end{remark}

\subsection{Second-order derivative of $J(u,\omega)$}
\label{sec:second-order}
In Theorem \ref{theorem:convergence-algorithm1} and in Lemma \ref{lemma:twice-differentiability}, we have seen that the twice Fr\'echet differentiability of the reduced cost function $j$ is part of the sufficient conditions guaranteeing the convergence of weak limit points from Algorithm \ref{alg:PSG_Hilbert_Nonconvex_Unconstrained}. 
For this reason, in this subsection we aim to investigate the twice Fr\'echet differentiability of the mapping $j$. For the purposes of this paper, we will keep the discussion formal. We start by providing an expression for the second derivative of $J(u,\omega)$  (see also \cite[p.~64]{Hinze} and \cite[ch.~4]{tro}, respectively, for a similar discussion in a deterministic setting). 

Hereafter, let Assumptions \ref{hp1}--\ref{objective-function} hold true. Moreover, assume that $\tilde{J}_\omega:=\tilde{J}(\cdot,\omega)$, $e(\cdot,\cdot,\omega)$ and $\rho$ are twice continuously Fr\'echet differentiable in $Y$, $ U\times Y$ and $U$, respectively.
We use the notation $S_{\omega}u:= S(\omega)u$ and throughout, statements are assumed to hold for almost every $\omega$. %\CG{In \cite{Hinze}, this doesn't appear to be called the sensitivity approach; in fact, they seem to introduce a Lagrangian function.} 
%It will be convenient to define $J_\xi:=J(\cdot,\omega)$, $\tilde{J}_\omega:=\tilde{J}(\cdot,\omega)$ and $\cS_\xi:=\cS(\omega)$ and the Lagrange function $L_\xi(y,u,p) = \tilde{J}_\omega(y) + \rho(u) + \langle p, e(y,u,\omega)\rangle_{Y^*,H^1(D)}$. Notice that 
%$$J_\xi(u) = \tilde{J}_\omega(\cS_\xi u) + \rho(u) =  \tilde{J}_\omega(\cS_\xi u) + \rho(u)+ \langle p, e(\cS_\xi u, u, \xi)\rangle_{H^{1}(D), Y^*}$$ for arbitrary $p \in Y^*$.
For $u\in U$ and a direction $s_1\in U$, the chain rule yields that 
\begin{equation*}\label{eq:first_derivative_sensitivity}
    \langle J'_{\omega}(u),s_1 \rangle_{U^*,U} = \langle \rho'(u), s_1 \rangle_{U^*,U} + \langle \tilde{J}'_{\omega}(S_{\omega}u),S'_{\omega}us_1 \rangle_{Y^*,Y}.
\end{equation*} Differentiating $\langle J'_{\omega}(u),s_1 \rangle_{U^*,U}$ in the direction $s_2\in U$, 
\begin{align*}
    \langle J''_{\omega}(u) s_2,s_1 \rangle_{U^*,U} &= \langle \rho''(u)s_2,s_1 \rangle_{U^*,U} + \langle \tilde{J}''_{\omega}(S_{\omega}u) S'_{\omega}us_2, S'_{\omega}us_1 \rangle_{Y^*,Y} \nonumber \\
    &\quad + \langle \tilde{J}'_{\omega}(S_{\omega}u), S''_{\omega}(u)(s_1,s_2)\rangle_{Y^*,Y}.
\end{align*}
This shows that
\begin{equation}
\label{eq:second-deriv}
J''_{\omega}(u)=\rho''(u)+ (S''_{\omega}u)^* \tilde{J}'_{\omega}(S_{\omega}u)+ (S'_{\omega}u)^*  \tilde{J}''_{\omega}(S_{\omega}u)S'_{\omega}u.    
\end{equation}

Therefore, to introduce the second-order derivative $J''_{\omega}(u)$ the following observations are required:
\begin{enumerate}
    \item %The control-to-state map $u\mapsto S_\omega u$ is continuously Fr\'echet differentiable from $U$ to $Y$ using following the arguments in \cite[Proposition 3.9]{KS} for a fixed $\omega$. 
First, we establish that there exists a unique solution $v(\omega)\in Y$ to the equation
  \begin{equation}\label{eq:sensitivity_dery}
        (\cA(\omega)+\cN_y (S_{\omega}u,\omega))v(\omega) = w %S'_{\omega}us= \cB(\omega)s.
    \end{equation}
for any $w \in Y^*.$ The operator $\cA(\omega)+\cN_y(y,\omega)$ is surjective from $Y$ into $Y^*$ since $\mathcal{N}_y(y,\omega)$ is maximally monotone as argued in the proof of \cite[Proposition 3.9]{KS}. This in combination with strong monotonicity gives unique solvability of \eqref{eq:sensitivity_dery}. The Implicit Function Theorem gives that $S_\omega$ is continuously Fr\'echet differentiable, where for a fixed $s \in U$, $v(\omega)=S'_\omega u s \in Y$ solves the equation
%For a fixed direction $s \in U$, $v \in Y$%$S'_{\omega}us\in Y$ solves the sensitivity equation %\CG{Prop 3.9 in Kouri/Surowiec gives Frechet dfb of $S$ from U to $L^q(\Omega,Y)$.}
    %
 %   that is,
    \begin{equation}\label{eq:sensitivity_dery2}
        (\cA(\omega)+\cN_y (S_{\omega}u,\omega))v(\omega)= \cB(\omega)s.
    \end{equation}
    or equivalently
    \begin{equation*}
    D_y e(u,S_{\omega}u,\omega)v(\omega)+D_u e(u,S_{\omega}u,\omega)=0.
    \end{equation*}
In fact, $v \in L^q(\Omega,Y)$ using identical arguments to those used in \cite[Theorem 3.5]{KS}.
% This guarantees the unique solvability of \eqref{eq:sensitivity_dery} and, in particular, the fact that $S'_{\omega}us$ is well-defined.
\item One can compute $(S'_{\omega}u)^*h\in U$ via 
\begin{equation*}
    (S'_{\omega}u)^*h= -D_u e(u,S_{\omega}u,\omega)^* D_y e (u,S_{\omega}u,\omega)^{-*} h, %\mbox{ for a.e. }\omega \in \Omega,
\end{equation*}
or, equivalently, by an adjoint equation formulation,
\begin{equation}\label{eq:adj_du}
   (S'_{\omega}u)^*h= \cB^*(\omega) h_1, 
\end{equation}
where $h_1=h_1(u,\omega)$ solves
\begin{equation}\label{eq:sensitivity_adjoint}
(\cA(\omega)+\cN_y(S_{\omega}u,\omega))^* h_1 = h.%, \mbox{a.s.}
\end{equation}
The existence of a unique solution to \eqref{eq:sensitivity_adjoint} can be argued as in the previous point but reasoning with the adjoint operator $(\cA(\omega)+\cN_y(S_{\omega}u,\omega))^*$. The integrability of the solutions of \eqref{eq:adj_du}-\eqref{eq:sensitivity_adjoint} can be investigated as in \cite[Proposition 4.3]{KS}. The difference from \cite[Proposition 4.3]{KS} lies in the fact that the regularity of $h_1$ depends on that of $h=\tilde{J}''_\omega(S_\omega u)S'_\omega u$.  %\textcolor{red}{TS: please check if you agree with this sentence, Caroline.}
%\CG{Here, $h=\tilde{J}''_\omega(S_\omega u)S'_\omega u$ so I expect the regularity of $h_1$ to depend on that. Maybe make a note since I think in general $h_1$ will not be $L^q.$}
\item \emph{Second derivative of the control-to-state mapping.} 
To show the existence of second derivative for the control-to-state mapping, we apply the Implicit Function Theorem. %We recall that $e(\cdot,\cdot,\omega)$ is twice continuously Fr\'echet differentiable in $U\times Y$ by assumption. 
%Moreover, the operator $D_y e(u,y,\cdot)= \cA(\cdot)+\cN_y(y,\cdot)$ is surjective as observed in the proof of \cite[Proposition 3.9]{KS}. \CG{Is this meant to be the same surjectivity statement as in point 1. above?} Thus, the function $S_{\omega}u$ inherits the smoothness property of $e$. Therefore, $S_{\omega}u$ is twice continuously Fr\'echet differentiable.
Taking $y=\mathcal{S}_{\omega}u$ in \eqref{eq:PDE_constraint} and differentiating this expression in the direction $s_1\in U$,
\begin{equation*}
\mathcal{A}(\omega)S'_{\omega}us_1+\mathcal{N}_y (S_{\omega}u,\omega)S'_{\omega}us_1 - \mathcal{B}(\omega)s_1=0.
\end{equation*}
We calculate next the directional derivative in the direction $s_2\in U$,
\begin{equation}\label{eq:second_sensitivity}
 (\mathcal{A}(\omega)+\mathcal{N}_y(S_{\omega}u,\omega))S_{\omega}''u (s_1,s_2) + \mathcal{N}_{yy}(S_{\omega}u,\omega)(S'_{\omega}u s_1,S'_{\omega}u s_2)=0.   
\end{equation}
Here, $S''_{\omega}(u)\in \mathcal{L}(U,\mathcal{L}(U,Y))$ and  $(S'_{\omega}(u)s_1)'s_2=S''_{\omega}(u)(s_1,s_2).$ The unique solvability of \eqref{eq:second_sensitivity} is given again by the fact that the operator $ D_y  e(u,y,\omega)=(\cA(\omega)+\cN_y(y,\omega))$ is surjective from $Y$ to $Y^*$ and strongly monotone. Summarizing, the second derivative $S''_{\omega}(u)$  is given by $z(\omega)=S''_{\omega}(u) (s_1,s_2),$ where $z(\omega)$ is the unique weak solution to  \begin{equation}\label{eq:second_sensitivity2}
%By differentiating $e(u,S_{\omega}u,\omega)=0$ twice \CG{aren't we missing terms wrt $u$?}, we obtain the following equation:
%\begin{equation}\label{eq:second_sensitivity}
%  D_y  e(u,S_{\omega}u,\omega) S_{\omega}''u (s_1,s_2)+ D_{yy}e(u,S_{\omega}u,\omega) S'_{\omega}us_1 S'_{\omega}us_2=0.
%\end{equation}
(\cA(\omega)+\cN_y(S_{\omega}u,\omega)) z(\omega) =- \cN_{yy} (S_{\omega}u,\omega)( \psi_1, \psi_2),
\end{equation}
and $\psi_i=\psi_i(\omega)= S'_{\omega}us_i \in Y$ for $i=1,2$. Applying the Implicit Function Theorem, which is possible due to the twice Fr\'echet differentiability of $e(\cdot,\cdot,\omega)$, we obtain that $S$ is twice continuously differentiable.
We investigate here the integrability of $S_{\omega}''u$. %\textcolor{red}{For caroline, did you check this part about Bochner integrability?}
%\CG{What if $t=\infty$?}
%\textcolor{red}{I have to add a comment on measurability here. It follows an integrability result, I tried something else but calculations were nasty. This is to make referee happy.} \CG{You don't need to comment on measurability if you are assuming operators etc. belong to Bochner spaces. Therefore I think you don't need to mention it.} 
Let $\bar{q} \in [1,\infty)$ such that $\frac{t\gamma}{t+1} \leq \bar{q} \leq t \gamma$ in case $t<\infty$ ($\bar{q}$ depending on $\gamma$ and $t$ appearing in Assumption \ref{hp1} and \ref{assumption_diff}).
By \eqref{eq:second_sensitivity2}, the fact that $\mathcal{N}_y(\cdot,\omega)$ is a nonnegative linear operator as stated in Assumption \ref{assumption_N_Ny-1} and then \eqref{coercivity}, for a.e.~$\omega\in \Omega$, %\CG{How is Assumption \ref{assumption_N_Ny} being used here?}
\begin{align*}
    C(\omega)\| z(\omega)\|_Y^{\gamma+1} 
    &\leq  \langle \mathcal{N}_{yy} (S_{\omega}u,\omega)(\psi_1,\psi_2), z(\omega)\rangle_{Y^*,Y}  \\
        &\leq \| \mathcal{N}_{yy} (S_{\omega}u,\omega)  ( \psi_1 , \psi_2)\|_{Y^*} \|z(\omega)\|_Y.
\end{align*}
Thus, we have that
\begin{equation*}
  \| z(\omega)\|^{\bar q}_Y \leq C^{-\frac{ \bar{q}}{\gamma}}(\omega)  \| \mathcal{N}_{yy} (S_{\omega}u,\omega) ( \psi_1, \psi_2) \|_{Y^*}^{\frac{\bar{q}}{\gamma}} \quad\text{ a.s.}
\end{equation*}
Taking the expectation, using the H\"older inequality with $a= \frac{t \gamma}{\bar{q}}\in [1,  \infty)$ (because $\bar{q} \leq t \gamma)$ $b=\frac{t \gamma}{t \gamma - \bar{q}}\in [1,\infty)$, we obtain that %\CG{I checked the following estimate and it is ok.}
\begin{equation*}
 \mathbb{E}\left[ \| z(\cdot)\|^{\bar q}_Y \right] \leq \mathbb{E}\left[ C^{-t}\right]^{\frac{\bar{q}}{t \gamma}} \mathbb{E} \left[  \| \mathcal{N}_{yy} (S(\cdot)u,\cdot) ( \psi_1, \psi_2) \|_{Y^*}^{\frac{\bar{q}t}{t \gamma-\bar{q}}} \right]^{\frac{t \gamma-\bar{q}}{t \gamma}}.
\end{equation*}
Therefore, since $C^{-1}\in L^t(\Omega)$ by Assumption \ref{assumption_diff}, the following fact holds true: if $\mathcal{N}_{yy} (S(\cdot)u,\cdot) ( \psi_1, \psi_2)  \in L^{\frac{\bar{q}t}{t\gamma-\bar{q}}}(\Omega,Y^*)$, then $z \in L^{\bar{q}}(\Omega,Y)$. (To show measurability of $z$, one can, for instance, use the assumed measurability of the other operators and apply Filippov's Theorem; see \cite[Theorem 3.5]{KS} for this type of argument.) The condition $ \frac{t \gamma}{t+1}\leq \bar q$ ensures that $\frac{\bar{q}t}{t\gamma-\bar{q}} \geq 1$. The Bochner integrability of $\mathcal{N}_{yy} (S(\cdot)u,\cdot) ( \psi_1, \psi_2)$ depends on that of $S(\cdot)u$, $\psi_1$, and $\psi_2$. With similar calculations one can investigate the case $t=\infty$. Indeed, in the case where $\gamma=1$,
\begin{equation*}
    \| z(\omega)\|_Y \leq C^{-1}(\omega)\| \mathcal{N}_{yy}(S_{\omega}u,\omega)(\psi_1,\psi_2)\|_{Y^{*}}\quad \text{a.s.}
\end{equation*}
Therefore, by H\"older inequality it follows that $z\in L^{1}(\Omega, Y)$ as long as $\mathcal{N}_{yy}(S(\cdot)u,\cdot)\in L^{1}(\Omega,Y^*)$. 
\end{enumerate}
%\CG{Can you send me your computations? I can't verify this.}
We have thus obtained \eqref{eq:second-deriv} together with the equations solved by  $S'_{\omega}u$ and $S''_{\omega}u$.
The twice Fr\'echet differentiability of $j$ can be investigated remembering the fact that $j(u):=\mathbb{E}[J(u,\cdot)]$ and appealing to Lemma \ref{lemma:frechet-exchange-derivative-expectation} at this point.

We remind the reader that our purpose here is to keep the model rather general, namely, to consider \eqref{eq:main_prb_PDEs}. A complete treatment of the properties of $j''$ can be given by considering a specific application, but this is beyond the scope of this paper.

\section{Numerical Experiments}
\label{sec:numerics}
%To demonstrate convergence, we adapt the example from \cite{GS2021}. 
\paragraph{Test problem}
For the following numerical simulation, we observe a special case of \eqref{eq:PDEs_intro}, namely
\begin{equation}
\label{eq:ex-PDE}
\begin{aligned}
    -\nabla \cdot (a(\cdot,\omega)\nabla y)  + y + y^5 &= u \\
    \frac{\partial y}{\partial n} &= 0,
    \end{aligned}
\end{equation}
where $D=(0,1) \times (0,1).$
The random field $a$ is defined using a truncated Karhunen-Lo\`eve expansion (see \cite[Example 9.37]{lord}) given by
\begin{equation}
\label{eq:random-field-expansion}
a(x,\omega)= \hat{a}(x,\xi(\omega))= a_0+\sum_{i=1}^{20} \sqrt{\eta_i} \phi_i  \xi^{i}(\omega),
\end{equation}
where $a_0=1$ and $\xi^{i}$ is randomly chosen according to the uniform distribution on the interval $[-1,1]$. The eigenfunctions and eigenvalues are given by 
$$\tilde{\phi}_{j,k}(x):= 2\cos(j \pi x_2)\cos(k \pi x_1), \quad \tilde{\eta}_{k,j}:=\frac{1}{4} \exp(-\pi(j^2+k^2)l^2), \quad j,k \geq 1,$$
and are reordered so that the eigenvalues appear in descending order (i.e., $\phi_1 = \tilde{\phi}_{1,1}$ and $\eta_1 = \tilde{\eta}_{1,1}$) and we choose the correlation length $l=0.5$.  We choose an objective of the form \eqref{eq:example_problem} with $y_D(x)=60+160(x_1(x_1-1)+x_2(x_2-1))$.

\paragraph{Verification of problem assumptions} Here, we briefly summarize why this problem fits the assumptions in the previous section. One can verify that $a(\cdot,\omega) \geq 1-\sum_{i=1}^{20} \exp(-\pi(j^2+k^2)l^2) =:C>0$ for the reordered pairs such that $i=1$ corresponds to $(j,k)=(1,1)$ and $i=20$ corresponds to $(j,k)=(4,4)$. Hence, the operator $\mathcal{A}(\omega)$ induced by the bilinear form $\int_D a(x,\omega) \nabla y \cdot \nabla v + yv \D x$ fulfills the coercivity condition \eqref{coercivity} with $\gamma=1$. It is also clearly bounded as an operator from $Y:=H^1(D)$ to $Y^*=H^{-1}(D).$ Note that the embedding $\mathcal{B}$ mapping elements of $L^2(D)$ to themselves in $H^{-1}(D)$ is compact and hence completely continuous. For this example, we have $\mathfrak{b} \equiv 0$ and $\mathcal{N}(y):=y^5$, which is maximally  monotone. Hence, Assumption \ref{hp1} is satisfied. Assumption \ref{assumption_diff} is also satisfied, since all operators are deterministic other than $\mathcal{A}(\cdot)$, and the fact that $\xi^i$ are uniformly distributed in $[-1,1]$ makes $\mathcal{A}(\cdot)y \in L^\infty(\Omega,Y)$ for all $y \in Y$.

%Firstly, observe that 
%\begin{equation*}
%0< 1-e^{-2 \pi 20^2 0.4^2} \leq a(x,\omega)\leq 1+e^{-2 \pi  20^2 0.4^2}  \quad \textit{ a.e. in } D\times \Omega.
%\end{equation*}
%Thus, here $\mathcal{A}(\omega)$ is monotone and fulfils the coercivity condition \eqref{coercivity} with $\gamma=1$ and $$C(\omega):=C:=1-e^{-2 \pi 20^2 0.4^2}>0,$$ 
%almost surely. 
%Then, $C^{-1}\in L^{\infty}(\Omega)$ and $\mathcal{A}(\omega)y \in L^{\infty}(\Omega,Y^*)$. The operator $\mathcal{B}(\omega)$ is deterministic and completely continuous as observed in \cite[Sect.5]{KS}. 
%Moreover, $\mathcal{B}\in L^{\infty}(\Omega,Y^*)$, $\cb \in L^{\infty}(\Omega, Y^*)$ and $\mathcal{N}(y,\omega)\in L^{\infty}(\Omega, Y^*)$ (observing that these operators are here deterministic).
%By the coercivity of $\mathcal{A}$ and the monotonicity of $\mathcal{N}$ (see also Equation (3.5) in \cite{KS}), 
%\begin{equation}\label{eq:est_y_ex}
 %   \| y_{\omega}(u) \|_{Y}\leq C^{-1} \| \mathcal{B}u\|_{Y^*}\leq C^{-1}\| \mathcal{B}\|_{\mathcal{L}(U,Y^*)}\|u\|_U ,\quad \textit{ almost surely, }
%\end{equation}
%where $\mathcal{B}(\omega)=\mathcal{B}$ is independent on $\omega$. Thus, $y_{\omega}(u)\in L^{\infty}(\Omega,Y)$.

% The fact that the nonlinear term $\mathbb{R} \ni y \mapsto y^5$ is monotonously increasing guarantees also that $\mathcal{N}(\cdot,\omega)=\mathcal{N}(\cdot)$ is maximally monotone. 
The mapping $\mathcal{N}$ is continuously differentiable from $L^{10}(D)$ to $L^2(D)$ with $\mathcal{N}'(y)v= 5y^4v$; one can verify this fact by routine calculations based on the H\"older inequality (see, e.g., \cite[Lemma 1.13]{Hinze}). %we have Fr\'echet differentiability since 
%\begin{align*}
%  &  \| (y+h)^5-y^5 -5 y^4 h\|_{L^2(D)} =\|5 yh^4 + 10 y^3 h^2 + 10 y^2 h^3 + h^5\|_{L^2(D)}\\
%&\leq 5 \| y \|_{L^{10}(D)}\| h\|_{L^{10}(D)}^4 + 10 \| y\|_{L^{10}(D)}^2 \| h\|^3_{L^{10}(D)}+ 10 \| y\|_{L^{10}(D)}^3 \|h\|_{L^{10}(D)}^2  +\|h\|_{L^{10}(D)}^5  \\
%& = o(\| h\|_{L^{10}(D)}).
%\end{align*}
%Moreover, the derivative is continuous, since
%\begin{align*}
 %   \| (\mathcal{N}'&(y+h)-\mathcal{N}'(y))v\|_{L^{2}(D)}= \| 5(4y^3h+6y^2h^2+4y h^3+h^4)v\|_{L^2(D)} \\
 %   & \leq 5\|v\|_{L^{10}(D)} \big(4 \|y\|_{L^{10}(D)}^3\|h\|_{L^{10}(D)}+ 6\| y\|_{L^{10}(D)}^2 \| h\|_{L^{10}(D)}^2\\
 %   &\quad + 4\| y\|_{L^{10}(D)}\| h\|_{L^{10}(D)}^3 + \|h\|_{L^{10}(D)}^4 \big)\rightarrow 0
%\end{align*}
%as $\|h\|_{L^{10}(D)}\rightarrow 0$.
By the Sobolev embedding theorem, for $n=2$, one has $Y \hookrightarrow L^{10}(D)$. %; moreover, the continuous embedding $L^2(D) \hookrightarrow Y^*$ holds true. 
Therefore, the mapping $y\mapsto \mathcal{N}(y)$ is continuously Fr\'echet differentiable from $Y$ to $Y^*$. It is easy to check the boundedness of the operator $\mathcal{N}_y$, therefore Assumption \ref{assumption_N_Ny}(i) is fulfilled. Now, consider that $y_{u}(\omega)\in L^q(\Omega, H^1(D))$ for any $q$ arbitrary large. Let $K$ denote the embedding constant of $L^2(D)$ into $H^{-1}(D)$ and $\hat{K}$ denote the embedding constant of $L^{10}(D)$ into $H^1(D)$. For Assumption \ref{assumption_N_Ny}(ii), note that the continuity of the operator $y\mapsto \mathcal{N}(y)$ from $L^{q}(\Omega, H^1(D))$ to $L^{s}(\Omega,H^{-1}(D))$ for $q$ sufficiently large and $s$ sufficiently small comes from the following growth condition %uniform boundedness condition
\begin{equation*}
    \| \mathcal{N}(y)\|_{H^{-1}(D)}\leq K \| y^5\|_{L^2(D)}\leq K \| y\|_{L^{10}(D)}^5\leq K \hat{K}\| y\|_{H^1(D)}^5 %\leq K\hat{K} c^5
\end{equation*}
for all $y\in H^1(D)$. % such that $\| y\|_{H^1(D)}\leq c$, in combination with the uniform continuity condition. 
%Thus, we can use the result in \cite[Theorem 5]{GKT} for (UB2) (Uniform boundeness 2) there. 
A similar calculation along with \cite[Theorem 5]{GKT} yields the continuity of the operator $y\mapsto \mathcal{N}_y(y)$ from $L^{q}(\Omega, Y)$ to $L^{qs/(s-q)}(\Omega, \mathcal{L}(Y,Y^*))$. Similar arguments can be made for $\mathcal{A}$.

We verify now Assumption \ref{objective-function}(i). Note that the mapping $y\mapsto \tilde{J}(y)$ is continuous, and thus, Carath\'eodory; clearly, its Fr\'echet derivative is continuous as a mapping from $Y$ to $Y^*.$  %Moreover,
%\begin{equation*}
 %   \tilde{J}(y)= \frac{1}{2} \| y-y_d\|_{L^2(D)}^2\leq \| y\|_{H^1(D)}^2+\| y_d \|_{L^2(D)}^2\leq c^2 + \| y_d\|_{L^2(D)}^2
%\end{equation*}
%for all $y\in H^1(D)$ such that $\| %y\|_{H^1(D)}\leq c$. Therefore, Theorem 1 and 4 in %\cite{GKT} (taking there the condition (UB1) with %$p=\infty$ and $q=1$) ensures the continuity of $F$ %from $L^{\infty}(\Omega, H^1(D))$ to $L^1(\Omega)$. 
%Furthermore, the Fr\'echet derivative $\tilde{J}_y \colon H^1(D)\rightarrow H^{-1}(D)$ is continuous and 
The Nemytskij operator $F'(y)= \tilde{J}_y (y(\cdot))$ is continuous from $L^q(\Omega,H^1(D))$ to $L^{\tilde{p}}(\Omega,H^{-1}(D))$ for $\tilde{p}=\tfrac{pq}{q-p}$ and any $1\leq p< q<\infty$. This can be verified by \cite[Theorem 4]{GKT} using the growth condition in Remark~\ref{rem-growth-condition}.
%$L^2(\Omega,H^1(D))$ to $L^{2}(\Omega,H^{-1}(D))$. 
%Indeed, the following growth condition holds true
%\begin{equation*}
%    \| \tilde{J}_y(y)\|_{H^{-1}(D)} \leq K (\|y\|_{L^2(D)}+ \|y_d\|_{L^2(D)})\leq K(c+ \|y_d\|_{L^2(D)}),
%\end{equation*}
%for all $y\in H^1(D)$ such that $\|y\|_{H^1(D)}\leq c$. By \cite[Theorem 1.4]{GKT}, the Nemytskij operator $F'(y)$ is continuous from $L^q(\Omega,H^1(D))$ to $L^p(\Omega, H^{-1}(D))$. 
Thus, by \cite[Theorem 7]{GKT}, $F$ is continuously Fr\'echet differentiable from $L^q(\Omega, H^1(D))$ to $\mathcal{L}(L^q(\Omega, H^1(D)), L^p(\Omega))$. %The conclusion then follows from the fact that 

Now, we will verify the additional assumptions that were required in the previous section. Thanks to \eqref{eq:estimate_dual_arc_1}, the assumptions in Proposition \ref{prop:differentiability_j} hold true with $\alpha=1$ and the constants $c_1$, $\tilde{c}_1$, $c_2$, and $\tilde{c}_2$ belong to $L^{\infty}(\Omega)$; moreover, since $\rho(u) =\tfrac{\lambda}{2} \|u\|^2_{L^2(D)}$, we obtain the boundedness condition on the corresponding derivative.
Now, we show the additional assumptions from Lemma \ref{lemma:basic_estimates}. One can deduce by routine calculations that the Lipschitz condition for the adjoint mapping $\mathcal{N}_y^*$, %where $\mathcal{N}_y$ is defined by
%$$\langle \mathcal{N}_y (y)h,v \rangle_{Y^*,Y} = \int_D 5 y^4(x) h(x)v(x)\, \D x,$$
as required in Lemma \ref{lemma:basic_estimates}, is satisfied by this model problem. Indeed, we observe that here
\begin{equation*}
   \| \mathcal{N}_y^*(y_2)-\mathcal{N}_y^*(y_1)\|_{\mathcal{L}(Y,Y^*)}= \sup_{\| h\|_{Y} =  \| v\|_{Y} =1} | \langle (\mathcal{N}_y^*(y_2)-\mathcal{N}_y^*(y_1))h,v \rangle_{Y^*,Y}|.
\end{equation*}
Moreover, for any $y_{i}\in Y=H^1(D)$ with $\| y_{i} \|_{L^{\infty}(D)}\leq M$ ($i=1,2)$,
\begin{align*}
&\left| \langle ( \mathcal{N}_y^* (y_2)- \mathcal{N}_y^* (y_1) )  h,v \rangle_{Y^*,Y} \right|\leq  \int_D 5\left| (y_2(x)^4-y_1(x)^4) h(x) v (x)\right| \D x  \\
    &= \int_D 5 \left| (y_2(x)+y_1(x))(y_2(x)^2+ y_1(x)^2)(y_2(x)-y_1(x)) h(x)v(x)\right|
    \D x  \\
    &\leq  20 M^3 \int_D \left| (y_2(x)-y_1(x)) h(x)v(x)\right|  \D x \\
    &\leq 20 M^3 \| y_2-y_1 \|_{L^{p_1}(D)} \| h \|_{L^{p_2}(D)}\| v \|_{L^{p_3}(D)},
\end{align*}
where in the last step we have utilized the H\"older inequality with $p_1,p_2,p_3 \geq 2$ such that $\frac{1}{p_1}+\frac{1}{p_2}+\frac{1}{p_3}=1$ (see, e.g., \cite[Lemma 1.13]{Hinze}). By the Sobolev Embedding Theorem  (cf.~\cite[Theorem 5.4]{a}), since $D\subset \mathbb{R}^2$ and $H^1(D) \hookrightarrow L^q(D)$  for all $2 \leq q < \infty$, we have that
\begin{align*}
 |  \langle \left( \mathcal{N}_y (y_2)- \mathcal{N}_y (y_1)\right)  h,v \rangle_{Y^*,Y}| \leq 
20 M^3 K^3 \| y_2-y_1 \|_{H^{1}(D)} \| h \|_{H^{1}(D)}\| v \|_{H^{1}(D)},
\end{align*}
%\CG{Should we have $y_1$ in an argument on the LHS here?}\textcolor{red}{I do not think so. why?} \CG{Because the LHS is equal to zero.}
where $K$ is the embedding constant. We see that for any $y_{i}\in Y=H^1(D)$ with $\| y_{i} \|_{L^{\infty}(D)}\leq M$ ($i=1,2)$,
\begin{equation}\label{eq:Lipsch_N_ex}
    \| \mathcal{N}_y^*(y_2)-\mathcal{N}_y^*(y_1)\|_{\mathcal{L}(Y,Y^*)} \leq   20 M^3 K^3 \| y_2-y_1 \|_{H^{1}(D)}
\end{equation}
and this proves the Lipschitz condition with for the mapping $\mathcal{N}_y^*(\cdot)$ as required. Since $y \mapsto D_y \tilde{J}(y,\omega)$ is locally Lipschitz, the assumptions of Lemma \ref{lemma:basic_estimates} are satisfied.

Now, we turn to Proposition \ref{prop:2.2verified}. By \cite[Theorem 4.7]{tro}, % and the fact that the nonlinear term $\mathbb{R} \ni y \mapsto y^5$ is monotonously increasing and continuous, we have that $y_{\omega}(u)\in L^{\infty}(D)$ for a.e.~$\omega \in \Omega$. Moreover, for a.e.~$\omega \in \Omega$ there exists $c_{\infty}(\omega)>0$ (which does not depend on the control $u$ or the nonlinear term) such that $\| y_{\omega}(u)\|_{L^{\infty}(D)}\leq c_{\infty}(\omega)\| u\|_{L^2(D)}$. Thus, the 
we have that condition \eqref{eq:casas_L_infty_estim} must hold true in this example. By replicating the proof of the aforementioned result, one can infer information on the integrability of $c_{\infty}$ which is, in principle, a random variable for PDEs with random data. 
%Since $c_2, \tilde{c}_2 \in L^\infty(\Omega)$, it is possible to show that the sequence of iterates is almost surely bounded as argued in Remark \ref{rem-Prop35}. 
Notice that when $c_{\infty}$ is independent of $\omega$, then the constant $L(M,\omega)$ appearing in Proposition \ref{prop:2.2verified} is in fact independent of $\omega$, as are the remaining terms in the constant $c_3.$ Hence $c_3 \in L^1(\Omega)$ is certainly fulfilled for this example. The complete analysis of the integrability of $c_{\infty}$ is beyond the scope of this paper and is left as a topic for future research.

\paragraph{Numerical details}
Simulations were run using FEniCS \cite{Alnes2015}. 
A uniform mesh with 1250 shape regular triangles was used.
The control, state, and adjoint were discretized using piecewise linear finite elements. The initial control was chosen to be $u_1(x) \equiv 1.$ 

The step size is chosen to be $t_n = \theta/n$ with $\theta = \tfrac{2}{\lambda}$, which is informed by the rule for the strongly convex case; cf.~\cite{GP}. 
%where the scaling was chosen such that $\theta \approx 1/\lVert G(u_1, \xi_1)\rVert$. 

\paragraph{Results}
To verify the convergence rate 
\eqref{eq:conv_est_g} in Theorem~\ref{thm:convergence-rate}, we use a sample average approximation (SAA) of the problem with $N=5,000$ randomly drawn samples $\xi_j=(\xi_j^1, \dots, \xi_j^{20})$. That is, we solve the approximate problem
\begin{equation*}
    \begin{aligned}
    &\min_{u \in L^2(D)} \, \left\lbrace \hat{j}_N(u):=\frac{1}{2N} \sum_{j=1}^N \| \hat{y}(\cdot, \xi_j)-y_D \|_{L^2(D)}^2 + \frac{\lambda}{2}\| u \|_{L^2(D)}^2 \right\rbrace\\
    & \qquad \text{s.t.} \quad \hat{y}(\cdot, \xi_j) \text{ solves \eqref{eq:ex-PDE} with } a=\hat{a}(\cdot,\xi_j).
    \end{aligned}
\end{equation*}

 Algorithm~\ref{alg:PSG_Hilbert_Nonconvex_Unconstrained} was run once each for three different values of the regularization parameter $\lambda$ with a single sample (taken from the SAA set) per iteration. At each iterate, the full gradient was computed; note that this is done for verifying convergence rates, only: Algorithm~\ref{alg:PSG_Hilbert_Nonconvex_Unconstrained} does not require more than one sample per iteration.  The test is terminated after 300 iterations (or if $\min_{t=1, \dots, n}\lVert \nabla \hat{j}_N(u_t) \rVert^2 \leq 10^{-8}$ for the deterministic experiment). The sequence of random vectors was generated using the \texttt{numpy} seed \texttt{numpy.random.seed(10)}. The 5,000 random vectors $\xi_j=(\xi_j^1, \dots, \xi_j^{20})$ were generated one after the other; i.e., we used the seed to generate the sequence $(\xi_1^1, \dots, \xi_1^{20}, \xi_2^1, \dots, \xi_2^{20}, \dots, \xi_{5000}^{20}).$ After generating the vectors and using the same seed, random indices are chosen from the set $\{1, 2, \dots, 5000 \}$ for 300 iterations for each value of $\lambda$ (in the order $\lambda=1$, then $\lambda=0.1$, then $\lambda=0.01$). 

The corresponding control and state obtained at the final iterate are displayed in Figures~\ref{fig:solution-1}--\ref{fig:solution-3} along with a reference curve for the convergence rate dictated by Theorem~\ref{thm:convergence-rate}. We see that the step-size yields good performance of the algorithm for this example. Note that the final state depends on the sample drawn at the last iterate. For the sake of comparison, the deterministic solution for the case $\lambda=0.01$ is displayed in Figure~\ref{fig:solution-4}. Comparing that solution with Figure~\ref{fig:solution-3}, one sees the modest impact of uncertainty for this model.  

%A plot of the control after termination is shown in~Fig.~\ref{fig:control}. The effect of the sparse term $\eta$ as well as the constraint set $C$ can be seen clearly. Decay of the objective function value and the stationarity measure are shown in Fig.~\ref{fig:experiment1}. We see convergence of the objective function values and the stationarity measure tends to zero as expected. 

\begin{figure}
    \centering
        \includegraphics[width=6cm]{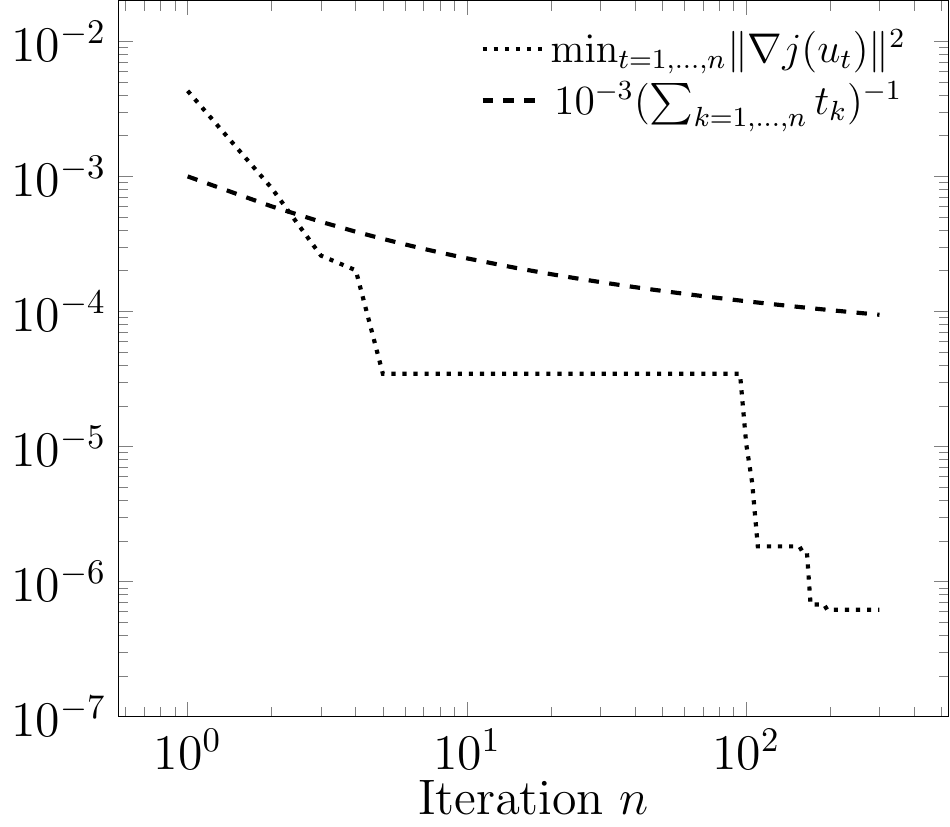}\hspace{1cm}
            \includegraphics[width=6cm]{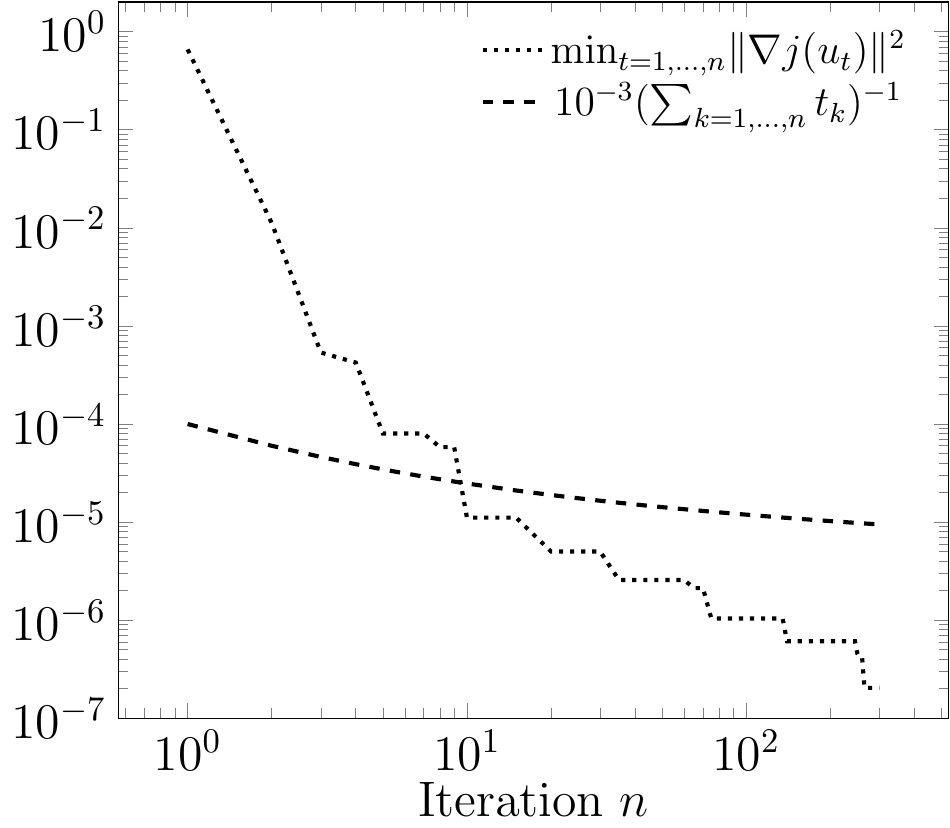}
    \includegraphics[width=6cm]{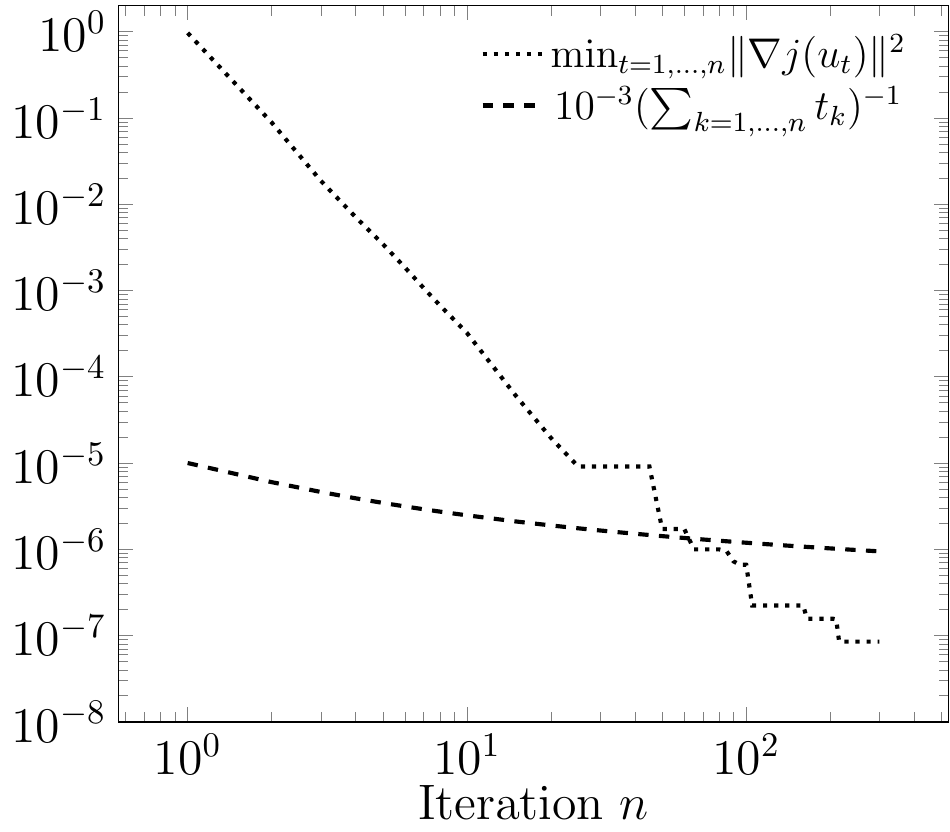}
    \caption{Convergence for $\lambda=1$ (top left), $\lambda=0.1$ (top right), and $\lambda=0.01$ (bottom)}
    \label{fig:convergence}
\end{figure}

\begin{figure}
    \centering
        \includegraphics[width=5.8cm]{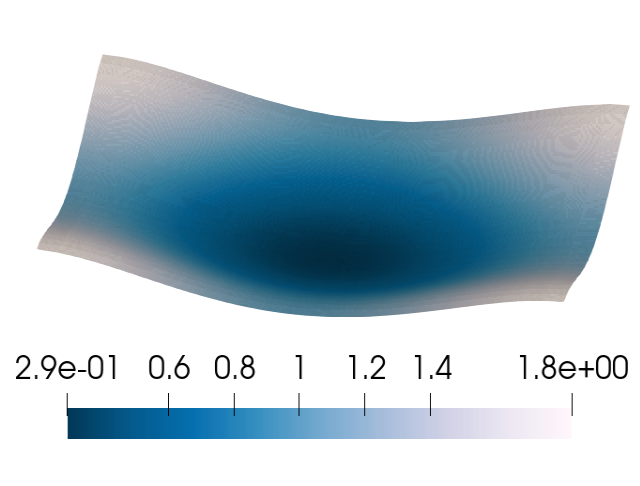} \hspace{0.5cm}
            \includegraphics[width=5.8cm]{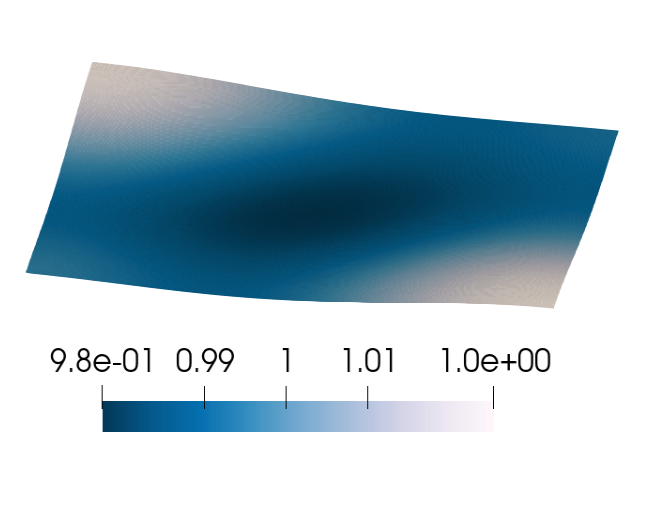}
    \caption{Control $u^*$ and state $y^*$ for $\lambda=1$}
    \label{fig:solution-1}
\end{figure}

\begin{figure}
    \centering
        \includegraphics[width=5.8cm]{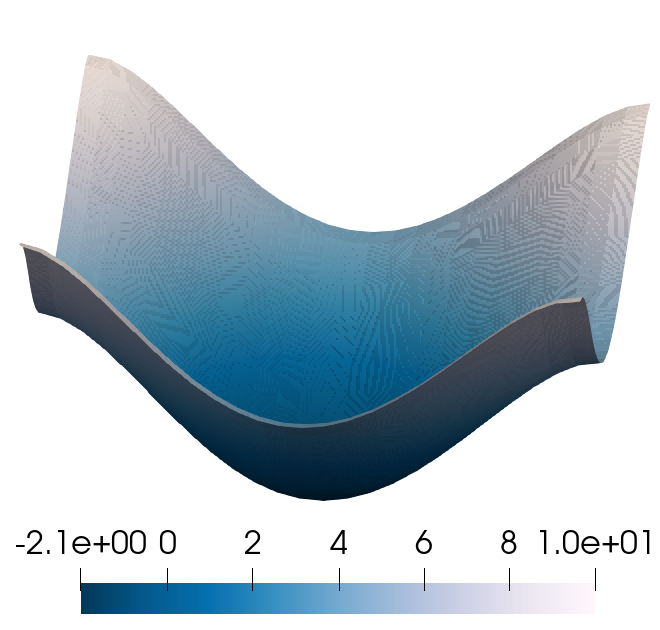} \hspace{0.5cm}
            \includegraphics[width=5.8cm]{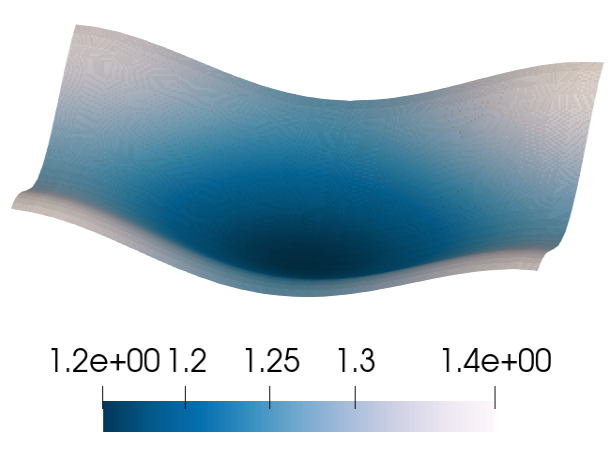}
    \caption{Control $u^*$ and state $y^*$ for $\lambda=0.1$}
    \label{fig:solution-2}
\end{figure}

\begin{figure}
    \centering
        \includegraphics[width=5.8cm]{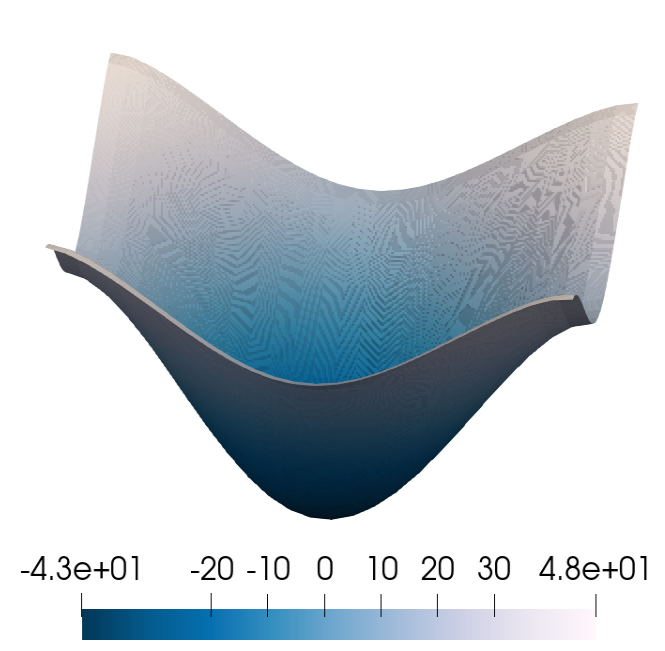}
        \hspace{0.5cm}
            \includegraphics[width=5.8cm]{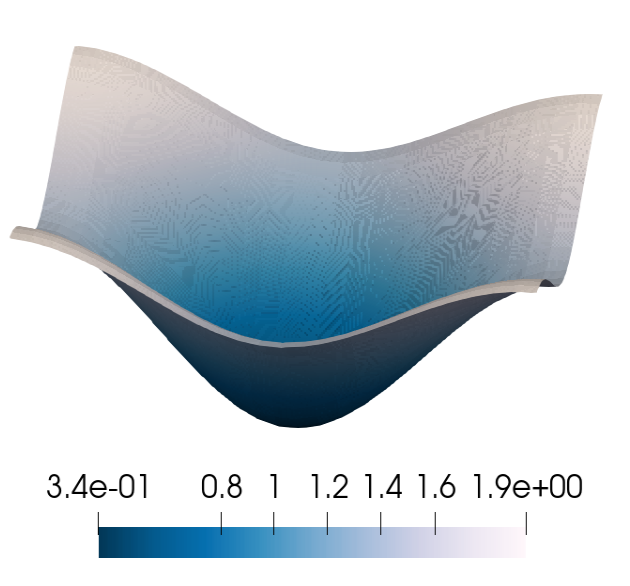}
    \caption{Control $u^*$ and state $y^*$ for $\lambda=0.01$}
    \label{fig:solution-3}
\end{figure}

\begin{figure}
    \centering
        \includegraphics[width=5.8cm]{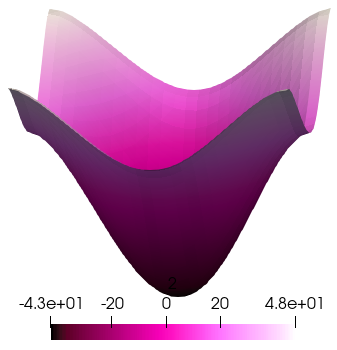}
        \hspace{0.5cm}
            \includegraphics[width=5.8cm]{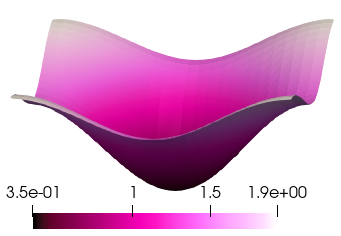}
    \caption{\textit{Deterministic} control $u^*$ and state $y^*$ for $\lambda=0.01$ and $a \equiv 1$}
    \label{fig:solution-4}
\end{figure}

\section{Conclusion}
In this paper, we continued our initial investigation \cite{GS2021} of nonconvex stochastic optimization problems in Hilbert space. We filled in missing aspects not treated in \cite{GS2021}, which focused on asymptotic convergence results for nonsmooth problems. The techniques used there relied on the more involved ODE method; in the smooth case, an asymptotic convergence result is attainable using standard arguments, as we demonstrated here. We further provided theoretical results concerning weak convergence and convergence rates of the stationarity measure. We were able to demonstrate that, for a large class of optimal control problems subject to semilinear PDEs under uncertainty, the assumptions we require for the stochastic gradient method to converge are reasonable, including measurability, which was not handled in previous work. Numerical experiments demonstrated the expected convergence rates.

\section*{Acknowledgements}
We would like to thank the reviewers for their careful reading. The second author acknowledges the support of the GNAMPA project ``Problemi inversi e di controllo per equazioni di evoluzione e loro applicazioni'' funded by INdAM and coordinated by Prof.~Floridia from the University of Reggio Calabria (Italy). The main part of this research was written while the second author was working at the Department of Information Engineering, Computer Science and Mathematics of the University of L'Aquila (Italy).
%% The Appendices part is started with the command \appendix;
%% appendix sections are then done as normal sections
%% \appendix
\section*{Appendix}
Here we list the assumptions and some basic results that are used in Section \ref{sec:OCforPDEs} when we study the PDE-constrained optimization problem. The following four assumptions are copied from \cite{KS}. Below, pointwise statements with respect to $\omega$ are always assumed to hold almost surely.
\begin{assumption}\label{hp1}
The operator $\cA\colon \Omega \rightarrow\mathcal{L}(Y,Y^*)$ is such that $\cA(\omega)$ is monotone and there exists a constant $\gamma>0$ and a positive random variable $C\colon \Omega \rightarrow [0,\infty)$ such that 
\begin{equation}\label{coercivity}
\langle \cA(\omega)y,y\rangle_{Y^*,Y}\geq C(\omega)\| y \|_{Y}^{\gamma + 1} \quad \forall y \in Y. 
\end{equation}
In addition, $\cN\colon Y\times \Omega \rightarrow Y^*$ is such that $\cN(\cdot, \omega)$ is maximally monotone with $\cN(0,\omega)=0$, and $\cb$ is a given function $\cb\colon \Omega \rightarrow Y^*$. Moreover, the operator $\cB\colon \Omega \rightarrow \mathcal{L}(U,Y^*)$ is such that $\cB(\omega)$ is completely continuous.
\end{assumption}

 \begin{assumption}\label{assumption_diff}
Suppose that there exists $s,t\in [1,\infty]$ with 
 \begin{equation*}
   1+ \frac{1}{\gamma} \leq s < \infty , \quad t\geq \frac{s}{\gamma (s-1)-1},
 \end{equation*}
 such that $\cA(\cdot)y\in L^s(\Omega,Y^*)$ for all $y\in Y$, % $\cN(\cdot,\omega)$ is continuous, and 
 $\cN(y,\cdot)\in L^s(\Omega, Y^*)$ for all $y \in Y$, $\cB\in L^s(\Omega,\mathcal{L}(U,Y^*))$, $\mathfrak{b}\in L^s(\Omega,Y^*)$, and $C^{-1}\in L^t(\Omega)$. 
 \end{assumption}
 
\begin{assumption}\label{assumption_N_Ny}
 \setcounter{subassumption}{0}
\subasu The mapping $y\mapsto \cN(y,\omega)$ is continuously Fr\'echet differentiable from $Y$ into $Y^*$. Moreover, the partial derivative $\cN_y(\cdot,\omega)$ defines a bounded (nonnegative~\footnote{Note that monotonicity of $y \mapsto \mathcal{N}(y,\omega)$ implies nonnegativity.} linear) operator from $Y$ to $Y^*$.\label{assumption_N_Ny-1} \\
\subasu The maps $\cA$ and $y \mapsto \cN(y,\cdot)$ are continuous from $L^q(\Omega,Y)$ into $L^s(\Omega, Y^*)$ and $y\mapsto \cN_y(y,\cdot)$ is a continuous map from $L^q(\Omega,Y)$ into $L^{\frac{qs}{s-q}}(\Omega,\mathcal{L}(Y,Y^*)$).
%\subasu The operator \CG{$(\mathcal{A}+\mathcal{N}_y(y,\cdot))^*$ is surjective.}
 \end{assumption}
 
%\CG{We note that the last assumption of Assumption~\ref{assumption_N_Ny} is needed to guarantee existence of solutions to the adjoint equation and is not included in \cite{KS}.}

\begin{assumption}\label{objective-function}
 \setcounter{subassumption}{0}
\subasu The function $\tilde{J}\colon Y\times \Omega \rightarrow \R$ is a Carath\'eodory function with $\tilde{J}(\cdot,\omega)$ is continuously Fr\'echet differentiable with respect to $y\in Y$. The superposition $F$ defined by $F(y):=\tilde{J}(y(\cdot),\cdot)$ is continuously Fr\'echet differentiable from  $L^q(\Omega,Y)$ into $ L^p(\Omega)$ with derivative $F'(y)= \tilde{J}_y(y(\cdot),\cdot)\in L^{\frac{pq}{q-p}}(\Omega, Y^*)$.\\
\subasu $\rho\colon U\rightarrow \mathbb{R}$ is convex and continuously Fr\'echet differentiable.
\end{assumption}

For our risk-neutral setting, $p=1$ is sufficient. Finally, to verify Fr\'echet differentiability of the objective $j(u)=\E[J(u,\cdot)]$, we use the following lemma from \cite[Lemma C.3]{GS2021}. We use the notation $J_\omega:=J(\cdot,\omega)$.
\begin{lemma}
\label{lemma:frechet-exchange-derivative-expectation}
Suppose $V\subset X$ is a open neighborhood of a Banach space $X$ containing $u$, and (i) the expectation $j(v)$ is well-defined and finite-valued for all $v \in V$, (ii) for almost every $\omega \in \Omega$, the functional $J_{\omega}\colon X \rightarrow \R$ is Fr\'echet differentiable at $u$, (iii) there exists a positive random variable $C_J \in L^1(\Omega)$ such that for all $v \in V$ and almost every $\omega \in \Omega$,
 \begin{equation*}
  \label{eq:randomLipschitz}
\lVert J'_{\omega}(v) \rVert_{X^*} \leq C_J(\omega).
 \end{equation*}  Then $j$ is Fr\'echet differentiable at $u$ and $j'(u) = \E[J'_{\omega}(u)].$
\end{lemma}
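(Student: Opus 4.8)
The plan is to prove this by a dominated-convergence argument justifying differentiation under the expectation. First I would show that the candidate derivative $\Lambda := \E[J'_\omega(u)]$ is a well-defined element of $X^*$. Its Bochner integrability follows from the domination $\lVert J'_\omega(u)\rVert_{X^*} \leq C_J(\omega)$ in (iii) with $C_J \in L^1(\Omega)$, once strong measurability of $\omega \mapsto J'_\omega(u)$ is established. Weak measurability is immediate, since for each fixed direction $d \in X$ the map $\omega \mapsto \langle J'_\omega(u), d\rangle_{X^*,X}$ is a pointwise limit (as $s = 1/n \to 0$) of the measurable difference quotients $s^{-1}(J(u+sd,\omega) - J(u,\omega))$, the measurability of $\omega \mapsto J(v,\omega)$ being guaranteed by the well-definedness of $j(v)$ in (i). Strong measurability then follows from Pettis's theorem under the separability present in the applications, where $X = U = H$ is a separable Hilbert space and hence $X^*$ is separable.

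Next I would reduce Fr\'echet differentiability of $j$ to a limit statement for the remainder. Setting
\[
R(h,\omega) := J(u+h,\omega) - J(u,\omega) - \langle J'_\omega(u), h\rangle_{X^*,X},
\]
linearity of the expectation gives
\[
j(u+h) - j(u) - \langle \Lambda, h\rangle_{X^*,X} = \int_\Omega R(h,\omega)\,\D P(\omega),
\]
so it suffices to prove that $\lVert h \rVert_X^{-1}\int_\Omega R(h,\omega)\,\D P(\omega) \to 0$ as $\lVert h\rVert_X \to 0$.

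The key step is to produce an $L^1$-dominating function for the rescaled remainder. Shrinking $V$ to an open ball $B_\delta(u) \subset V$, every segment joining $u$ to $u+h$ with $\lVert h\rVert_X < \delta$ lies in $V$; applying the mean value inequality to $t \mapsto J(u+th,\omega)$ together with the uniform bound in (iii) yields $|J(u+h,\omega) - J(u,\omega)| \leq C_J(\omega)\lVert h\rVert_X$, whence
\[
\frac{|R(h,\omega)|}{\lVert h\rVert_X} \leq \frac{|J(u+h,\omega)-J(u,\omega)|}{\lVert h\rVert_X} + \frac{|\langle J'_\omega(u),h\rangle_{X^*,X}|}{\lVert h\rVert_X} \leq 2C_J(\omega),
\]
which is integrable and independent of $h$. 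To conclude, I would invoke the sequential characterization of limits: fix an arbitrary sequence $h_n \to 0$ with $h_n \neq 0$. By (ii), Fr\'echet differentiability of $J_\omega$ at $u$ gives $R(h_n,\omega)/\lVert h_n\rVert_X \to 0$ for almost every $\omega$, so the Dominated Convergence Theorem yields $\int_\Omega R(h_n,\omega)/\lVert h_n\rVert_X \,\D P(\omega) \to 0$. As the sequence was arbitrary, the full limit vanishes, proving that $j$ is Fr\'echet differentiable at $u$ with $j'(u) = \Lambda = \E[J'_\omega(u)]$.

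The main obstacle I anticipate is the measurability and Bochner integrability of $\omega \mapsto J'_\omega(u)$ in a general Banach space, since strong measurability does not follow from weak measurability absent a separability hypothesis; in the applications of interest this is resolved because the control space is a separable Hilbert space. A secondary technical point is the mean value inequality step, which requires $J(\cdot,\omega)$ to be differentiable along the segment $[u,u+h]$ — this is exactly what the uniform bound (iii) presupposes, as it quantifies $\lVert J'_\omega(v)\rVert_{X^*}$ for all $v \in V$.
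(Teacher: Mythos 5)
Your proof is correct and takes essentially the same route as the proof this paper relies on (the lemma is quoted from \cite[Lemma C.3]{GS2021} rather than reproved here): a mean-value estimate along the segment $[u,u+h]$ produces the $h$-independent dominating function $2C_J(\omega)$, and the Dominated Convergence Theorem applied along arbitrary sequences $h_n \to 0$ yields Fr\'echet differentiability of $j$ with $j'(u)=\E[J'_\omega(u)]$. Your explicit handling of the strong measurability and Bochner integrability of $\omega \mapsto J'_\omega(u)$ --- via separability of $X$ in the intended application, or alternatively by reading $\E[J'_\omega(u)]$ as a weak (Gelfand) integral so that only the scalar identity $\langle j'(u),h\rangle = \E[\langle J'_\omega(u),h\rangle]$ is needed --- addresses a point the statement itself glosses over, and is a sound resolution.
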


%% \section{}
%% \label{}

%% If you have bibdatabase file and want bibtex to generate the
%% bibitems, please use
%%
%%  \bibliographystyle{elsarticle-num} 
%%  \bibliography{<your bibdatabase>}

%% else use the following coding to input the bibitems directly in the
%% TeX file.

\end{document}